\newtheorem{thm}{Theorem}[section]
\newtheorem{prop}[thm]{Proposition}
\newtheorem{lemma}[thm]{Lemma}
\newtheorem{cor}[thm]{Corollary}
\newtheorem*{thm*}{Theorem}
\DeclareMathOperator{\tr}{tr}
\DeclareMathOperator{\sym}{Sym}
\date{}
\begin{document}

\title{Beyond Endoscopy via the trace formula - III\\
The standard representation}

\author{S. Ali Altu\u{g}}

\maketitle

\abstract
We finalize the analysis of the trace formula initiated in \cite{Altug:2015aa} and developed in \cite{Altug:2015ab}, and calculate the asymptotic expansion of the beyond endoscopic averages for the standard $L$-functions attached to weight $k\geq3$ cusp forms on $GL(2)$ (cf. Theorem \ref{mainthm}). This, in particular, constitutes the first example of beyond endoscopy executed via the Arthur-Selberg trace formula, as originally proposed in \cite{Langlands:2004aa}. As an application we also give a new proof of the analytic continuation of the $L$-function attached to Ramanujan's $\Delta$-function.

\tableofcontents

\section{Introduction}

Let us begin by briefly recalling beyond endoscopy. Since our aim here is to put the current paper in context rather than to give a comprehensive overview, rather than trying to make precise statements we will simply introduce the main problem. For a more through overview we refer the reader to the original article \cite{Langlands:2004aa} and the recent exposition in \cite{Arthur:2015ab}.

Let $G$ be a reductive algebraic group defined over a global field $F$ and let $S$ be a finite set of places of $F$ containing all the archimedean places. The main aim of beyond endoscopy is to isolate those automorphic representations of $G$ (unramified outside of $S$) that are functorial transfers from other groups. The strategy proposed in \cite{Langlands:2004aa} is to use the partial automorphic $L$-functions, $L^S(s,\pi,\rho)$, for various irreducible representations $\rho:{^LG}\rightarrow GL(V)$ to detect functorial transfers. It is based on the expectation that if $L^S(s,\pi,\rho)$ has a pole in $\Re(s)\geq1$ then $\pi$ should be a functorial transfer, and conversely if $\pi$ is a functorial transfer then, by a theorem of Chevalley (cf. \cite{Chevalley:1968aa}), one can find a $\rho$ such that $L^S(s,\pi,\rho)$ has a pole in $\Re(s)\geq1$.

To analyze the poles of
\[L^S(s,\pi,\rho)=\sum_{\substack{n\geq 1}}^{'}\tfrac{a_{\pi,\rho}(n)}{n^s},\]
where the prime indicates that we are summing over $n$ that are not divisible by any of the finite primes in $S$, in the region $\Re(s)\geq1$ one can study the asymptotic behavior, in the variable $X$, of the partial averages
\[\sum_{n<X}^{'}a_{\pi,\rho}(n),\]
up to terms of size $o(X)$. The strategy proposed in \cite{Langlands:2004aa} is to use the trace formula to study the averages above. More precisely, let $\mathbb{A}_S=\prod_{v\in S}\mathbb{Q}_v$, $\mathbb{A}^S=\prod_{v\notin S}^{'}\mathbb{Q}_v$, and $f_S\in C_c^{\infty}(G(\mathbb{A}_S))$. Then, for each $n\geq1$ that is relatively prime to every prime in $S$ there exist a function $f^{n,\rho}\in C_c^{\infty}(G(\mathbb{A}^S))$ such that 
\[\sum_{\pi \in L^2_{disc}(G)}\tr(\pi_S(f_S))a_{\pi,\rho}(n)=\tr(R_{disc}(f_Sf^{n,\rho})),\]
where $L^2_{disc}$ denotes the discrete part of the automorphic spectrum\footnote{We are ignoring the issues about central characters, stability etc. since a thorough discussion of such would take us too far afield.} of $G$. Substituting this expression into the partial averages of coefficients above, we can now state the problem of beyond endoscopy as to study the asymptotic behavior of the averages, 
\begin{equation}\label{be}
\sum_{n<X}^{'}\tr(R_{disc}(f_Sf^{n,\rho})),
\end{equation}
using he Arthur-Selberg trace formula. In \cite{Altug:2015aa} the study of the averages in \eqref{be} was initiated for $G=GL(2)$, $S=\{\infty\}$, and $\rho=\sym^r$, the $r$'th symmetric power representation of ${^L}G=GL(2,\mathbb{C})$.  The geometric side of the trace formula consists of a sum over rational conjugacy classes of (weighted) orbital integrals multiplied by certain arithmetic volume factors. 

The critical part of these sums are the ones over elliptic conjugacy classes, those conjugacy classes whose characteristic polynomials are irreducible over $\mathbb{Q}$. In order to analyze the elliptic part, an appropriate approximate functional equation was introduced in \cite{Altug:2015aa} (cf. (4') of loc. cit.) and a Poisson summation was applied on the so-called Steinberg-Hitchin base (i.e. the space of characteristic polynomials in this case) to isolate the contribution of certain special representations (which, in general, give non-zero contribution to the asymptotic expansion of \eqref{be}, see (31) and (65) of \cite{Langlands:2004aa}) were isolated. In the subsequent paper \cite{Altug:2015ab} the remaining terms after Poisson summation were analyzed giving a firm control over the asymptotic behavior of various Fourier transforms that appear after Poisson summation (cf. theorems A.14 and A.15 of \cite{Altug:2015ab}). We remark that both of the papers cited above are concerned with a single trace formula, in other words they are not concerned with the averages in \eqref{be} but rather prove results for an individual $n$.

The current paper puts all of the previous work together and executes the asymptotic analysis of \eqref{be} for $G=GL(2)$ over $\mathbb{Q}$, $S=\{\infty\}$, and $\rho=\sym^1$, the standard representation of $GL(2,\mathbb{C})$, where we specialize our test function to pick up holomorphic discrete series representation at the archimedean place. This puts us in the framework of classical holomorphic modular forms of weight $k$ and our main theorem can be stated as follows: 
\vspace{0.1in}
\begin{thm}\label{mainthm}Let $k\geq3$ be an integer and for any $n\geq1$ let $T_k(n)$ denote the $n$'th Hecke operator (eigenvalues normalized so that the Ramanujan conjecture reads as $\tr(T_k(n))=O_{k,\epsilon}(n^{\epsilon})$ for every $\epsilon>0$) acting on the space $S_k$ of holomorphic cusp forms of weight $k$. Then, for every $\epsilon>0$
\[\sum_{n<X}\tr(T_k(n))=O_{k,\epsilon}(X^{\frac{31}{32}+\epsilon}).\]

\end{thm}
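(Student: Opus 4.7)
The plan is to apply the Eichler-Selberg trace formula to $\tr(T_k(n))$ and then execute, over $n<X$, the beyond endoscopy analysis initiated in \cite{Altug:2015aa} and continued in \cite{Altug:2015ab}. Under the Ramanujan normalization, the trace decomposes into identity, hyperbolic (plus central/unipotent), and elliptic contributions. The identity part is supported on perfect squares $n=m^2$, so it contributes $O_k(X^{1/2})$ after the outer sum. The hyperbolic part reduces to a normalized divisor sum whose contribution is $O_{k,\epsilon}(X^{1/2+\epsilon})$. Both are far below the target, so the entire problem reduces to the elliptic sum $\sum_{n<X}\tr_{\text{ell}}(T_k(n))$.

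First I would insert the approximate functional equation from (4$'$) of \cite{Altug:2015aa} into the elliptic term, rewriting it as a smoothed sum of Hurwitz class numbers $H(4n-t^2)$ weighted by Chebyshev-type factors $U_{k-2}(t/(2\sqrt{n}))$. Next, following \cite{Altug:2015aa}, I would apply Poisson summation in the variable $t$ on the Steinberg-Hitchin base. This separates off the contribution of the special/residual representations (the zero dual frequency together with the terms flagged in (31) and (65) of \cite{Langlands:2004aa}), which when summed over $n<X$ produce an explicit main term of size $O_k(X^{1/2+\epsilon})$, from a dual Poisson sum indexed by nonzero frequencies that constitutes the piece genuinely requiring cancellation.

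Next I would substitute the asymptotic expansions of the Fourier transforms proved in Theorems A.14 and A.15 of \cite{Altug:2015ab} into the dual Poisson sum. Doing this makes the $n$-dependence sufficiently explicit that the outer $n$-sum can be interchanged with the dual frequency sum. For each dual frequency $m$, one is then left with a sum over $n<X$ that combines the archimedean oscillatory factor produced by A.14/A.15 with an average of class number expressions; this sum one estimates by extracting cancellation from the phase. The exponent $31/32$ should emerge from optimizing between (i) the cutoff in the approximate functional equation of \cite{Altug:2015aa}, (ii) the truncation point of the dual Poisson sum, and (iii) the trivial range in which one abandons the oscillatory estimate in favor of the Ramanujan-on-average bound; at the optimum, all three error sources match at $X^{31/32+\epsilon}$.

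The main technical obstacle will be uniformity in $n$. The results of \cite{Altug:2015aa,Altug:2015ab} were proved for a fixed $n$, so executing the outer sum over $n<X$ requires tracking every implicit constant and every lower-order term in the Fourier-transform asymptotics A.14 and A.15 and verifying that none of them grows with $n$ faster than a factor of $n^\epsilon$. A secondary but delicate difficulty is the boundary region where $t^2$ is close to $4n$: here the Hurwitz class number $H(4n-t^2)$ degenerates while the Chebyshev weight vanishes only to first order, so the sharp form of A.14/A.15 near the endpoint of the Poisson-transformed interval is exactly what is needed to keep the boundary contribution within $O_{k,\epsilon}(X^{31/32+\epsilon})$.
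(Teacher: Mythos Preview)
Your proposal contains a genuine gap in the treatment of the hyperbolic term and the $\xi=0$ dual term. You claim the hyperbolic part contributes $O_{k,\epsilon}(X^{1/2+\epsilon})$ after summing over $n<X$, and similarly that the zero dual frequency produces a term of size $O_k(X^{1/2+\epsilon})$. Both of these are false. In the paper, Proposition~\ref{prophyp} shows that
\[
\sum_{n<X}\eqref{sel2}=\tfrac{X}{1-k}+O_k(\sqrt{X}),
\]
so the hyperbolic contribution carries a genuine main term of size $X$. Likewise, the $\xi=0$ term after Poisson summation is computed in Corollary~\ref{cordom} to be $\tfrac{X}{k-1}+O_k(1)$, again of size $X$. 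These two terms cancel exactly, and this cancellation is one of the structural points the paper singles out (see the discussion around \eqref{weird}). Your proposal, by estimating each as $O(X^{1/2+\epsilon})$, would never see this cancellation and would in fact be off by a term of exact order $X$ on each side.

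A second, more tactical gap concerns the nonzero dual frequencies. You describe bringing in the $n$-sum and ``extracting cancellation from the phase,'' but you do not identify the mechanism. In the paper the $\xi\neq 0$ sum is split into ranges $S_0,S_1,S_2$ (handled by the pointwise bounds of \cite{Altug:2015ab}) and a critical range $X^{1/4-\kappa}\ll lf^2\ll X^{1/2+\kappa}$ with $\xi\ll X^{1/6+\kappa+\alpha}$. On that critical range the decisive step is a \emph{second} Poisson summation, now in the $n$-variable modulo $4lf^2$ (Theorem~\ref{thmsigma}); the resulting $\nu=0$ term vanishes because the constraint $lf^2\mid\xi$ is incompatible with the ranges, and the $\nu\neq 0$ terms are small by the decay of the new transform $J_{l,f}(\xi,\nu,X)$. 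The exponent $31/32$ then comes from the specific choice $\kappa=\tfrac{1}{32}-\tfrac{\epsilon}{2}$, $\alpha=\tfrac{1}{48}-\tfrac{\epsilon}{3}$ balancing Corollaries~\ref{cors0}--\ref{cors}. None of this structure is visible in your outline, and without the second Poisson step the critical range cannot be beaten below $X$.
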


We will comment on the generality of the methods, the dependence on various assumptions, and possible improvements on the exponent $\frac{31}{32}$ below in the next section. 

To end the introduction we also present an application of the above theorem and give a new proof of the analytic continuation of the $L$-function attached to the Ramanujan $\Delta$-function. Recall that $\Delta(z)$ and $\tau(n)$ are defined by
 \[\Delta(z):=e(z)\prod_{n=1}^{\infty}(1-e(nz))^{24}=\sum_{n=1}^{\infty}\tau(n)e(nz).\]
It is well-known (cf. \cite{Serre:1996aa} pg. 84) that $\Delta(z)$ is a weight $12$ cusp form of full level. Attached to $\Delta$ is the $L$-function
\[L(s,\Delta):=\sum_{n=1}^{\infty}\tfrac{\tau(n)}{n^s}. \]
It is not hard to see that $L(s,\Delta)$ is convergent for $\Re(s)>7$ (loc. cit. (39) on pg. 94). Theorem \ref{mainthm} then has the following immediate corollary
\begin{cor}\label{maincor}$L(s,\Delta)$ extends to a holomorphic function in $\Re(s)>\frac{11}{2}+\frac{31}{32}$.

\end{cor}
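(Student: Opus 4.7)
The plan is a routine application of partial summation once the two normalizations of the Hecke operator are reconciled.

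\textbf{Step 1: match normalizations.} The space $S_{12}$ is one-dimensional and spanned by $\Delta$, so the classical (unnormalized) Hecke operator acts on $S_{12}$ by the single eigenvalue $\tau(n)$. The normalization used in Theorem \ref{mainthm} is chosen so that Ramanujan reads $O_{k,\epsilon}(n^{\epsilon})$, which corresponds to dividing the classical eigenvalues by $n^{(k-1)/2}$. For $k=12$ this gives
\[\tr(T_{12}(n)) = \frac{\tau(n)}{n^{11/2}},\]
and hence, in the half-plane $\Re(s)>7$ where the Dirichlet series converges absolutely,
\[L(s,\Delta) = \sum_{n\geq 1}\frac{\tau(n)}{n^{s}} = \sum_{n\geq 1}\frac{\tr(T_{12}(n))}{n^{s-11/2}}.\]

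\textbf{Step 2: Abel summation.} Set $A(X) := \sum_{n<X}\tr(T_{12}(n))$. Partial summation, together with the fact that $A(X)/X^{s-11/2}\to 0$ in the relevant region (ensured by the bound in Step 3), yields
\[L(s,\Delta) = \left(s-\tfrac{11}{2}\right)\int_{1}^{\infty}\frac{A(X)}{X^{s-11/2+1}}\,dX.\]

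\textbf{Step 3: apply Theorem \ref{mainthm}.} The bound $|A(X)|\ll_{\epsilon}X^{31/32+\epsilon}$ shows that the integrand is $\ll_{\epsilon} X^{-1-(\Re(s)-11/2-31/32-\epsilon)}$, which is integrable on $[1,\infty)$ whenever $\Re(s)>11/2+31/32+\epsilon$. Since $\epsilon>0$ is arbitrary, the integral converges locally uniformly on the half-plane $\Re(s)>11/2+31/32$ and, by Morera's theorem, defines a holomorphic function there. This function coincides with the original Dirichlet series for $\Re(s)>7$, so it provides the required analytic continuation.

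There is no substantive obstacle here; the entire arithmetic content is carried by Theorem \ref{mainthm}. The only detail to keep track of is the shift by $n^{(k-1)/2}$ separating the classical eigenvalue $\tau(n)$ from the Ramanujan-normalized trace $\tr(T_{12}(n))$ appearing in the theorem.
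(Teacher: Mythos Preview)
Your proof is correct and follows essentially the same route as the paper: reconcile the normalization $\tau(n)=n^{11/2}\tr(T_{12}(n))$, then apply Abel/partial summation together with the bound $A(X)\ll X^{31/32+\epsilon}$ from Theorem \ref{mainthm}. The only cosmetic difference is that you use the integral form of Abel summation on the normalized coefficients directly, whereas the paper sums by parts on the $\tau(n)$ themselves; both lead to the same conclusion.
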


Note that because of normalization issues (i.e. with the above definition of $\tau(n)$ the Ramanujan conjecture would read as $\tau(n)=n^{\frac{11}{2}+\epsilon}$ for every $\epsilon>0$) the usual line $\Re(s)=1$ is shifted to $\Re(s)=1+\frac{11}2$. Let us also remark that the novelty of Corollary \ref{maincor} not the result itself, which is well known and dates back to the works of Hecke\footnote{Hecke's proof indeed gives the analytic continuation to the whole complex plane as well as the functional equation for the $L$-finction.} (cf. \cite{Hecke:1959aa}). It is rather the method of proof, which gives the analytic continuation of an automorphic $L$-function using only the trace formula.

\subsection{Several remarks and comments}\label{odds}

\begin{itemize}

\item First, we would like to emphasize that the novelty of the paper is in the method rather than the result. The fact that the standard $L$-functions for $GL(2)$ have analytic continuation is well-known (it goes back to Hecke in the setting of the current paper and is known in much greater generality (cf. \cite{Godement:1972aa})). It is the method by which we prove the analytic continuation that is new and constitutes the first example of beyond endoscopy carried out via the Arthur-Selberg trace formula, following the original proposal in \cite{Langlands:2004aa}.

\item Let us make a couple of remarks on the generality of the method and the various restrictions we have in Theorem \ref{mainthm}. As already mentioned, the specific choice of test functions at the archimedean place puts us in the framework of the classical Selberg trace formula. It also avoids the contribution of the continuous and the non-tempered spectrum (i.e. the trivial representation) with the price that it picks up only holomorphic cusp forms (no Maass forms interfere).

The reason we chose this approach is that it avoids peripheral issues and go directly to the heart of the matter which is the analysis of the averages in \eqref{be} over the elliptic part of the trace formula. Our primary aim in this paper is to analyze the beyond endoscopic averages in \eqref{be} rather than to give a proof of the analytic continuation of the standard $L$-functions for $GL(2)$. The analytic difficulties related to the elliptic part are already present in the Selberg trace formula, and this is why we chose this approach rather than working in the generality of \cite{Altug:2015aa} and \cite{Altug:2015ab}. We should also note the analysis carried out in these references are sufficient to carry out the analysis without any restriction on the archimedean test function.

The assumption about $S=\{\infty\}$ (i.e. we are restricting to representations unramified at every finite place, or classically, forms of full level), on the other hand, is harmless analytically and can be removed without any trouble. It brings in congruence conditions in the sums (depending on the allowed ramification) and does not effect the analysis in any serious way. It just brings extra work on the algebra and would complicate (the already complicated) notation.

\item One can, without much effort, improve the exponent $\frac{31}{32}$ of Theorem \ref{mainthm}. (Our estimates, especially in \S\ref{secell2}, are far from optimal.) Since our aim in this paper is to execute the beyond endoscopic averages in \eqref{be}, which is equivalent to getting an $o(X)$ in Theorem \ref{mainthm}, we did not, in any way, aim for optimality in the exponent.

\item We would also like to briefly mention the connection of the paper with the $\rho$-trace formula of \cite{Arthur:2015ab} (note that $\rho$ is denoted by $r$ in that article). Instead of repeating various definitions and constructions of Arthur we simply refer the reader to \cite{Arthur:2015ab} pages 8-14. The result of Theorem \ref{mainthm}, in particular, proves the $\rho$-trace formula for $G=GL(2)$ and $\rho=$standard representation. In terms of Arthur's notation (cf. (2.3) of loc. cit.) it can be stated as
\[S_{cusp}^{\rho}(f)=0,\]
where $f$ denotes our specific choice of archimedean test function.

\item The last remark is on a peculiar phenomenon about the averages in \eqref{be}. To describe the issue let us first start with the expected result and work our way back. As in Theorem \ref{mainthm} let us fix an integer $k\geq3$. If $\pi$ is an cuspidal automorphic representation of $GL(2)$ attached to a cusp form of weight $k$ of full level, then the standard $L$-function, $L(s,\pi)$, is holomorphic in $\Re(s)\geq1$. Therefore, denoting the $n$'th Hecke operator acting on the space of weight $k$ cusp forms of full level, normalized as in Theorem \ref{mainthm}, by $T_k(n)$, we expect to have
\[\sum_{\pi}Res_{s=1}L(s,\pi)=\lim_{X\rightarrow \infty}\tfrac{1}{X}\sum_{n<X}\tr(T_k(n))=0.\]
This indeed is implied by the main result of the paper, however one sees a surprising feature in the calculation of this limit. Using the trace to calculate the above limit, one gets the limit of the sum\footnote{In what follows one should keep in mind that the contribution of the non-tempered spectrum (the trivial representation in this case), which was shown to contribute to \eqref{sel1} (cf. Theorem 1 of \cite{Altug:2015aa}), is $0$ in this setting, so we can ignore it. In general one needs to subtract it from \eqref{sel1} to make sense of the below claims.} (cf. \S\ref{sectf})   
\[\lim_{X\rightarrow \infty}\tfrac{1}{X}\sum_{n<X}\tr(T_k(n))=\lim_{X\rightarrow \infty}\tfrac{1}{X}\sum_{n<X}\{\eqref{sel1}+\eqref{sel2}+\eqref{sel3}\},\]
where \eqref{sel1} is the contribution of the elliptic conjugacy classes, \eqref{sel2} is the contribution of the hyperbolic and unipotent conjugacy classes (hyperbolic contribution is the sum over all $d\mid n$ that satisfies $d\neq \sqrt{n}$, and the unipotent contribution comes from $d=\sqrt{n}$, which exists only if $n$ is a perfect square), and \eqref{sel3} is the contribution of the identity conjugay class.

The remarkable point is that the limits of the averages of the individual contributions are \emph{non-zero}. More precisely, the proof of Theorem \ref{mainthm} shows that
\begin{equation}\label{weird}
\lim_{X\rightarrow \infty}\tfrac1X\sum_{n<X}\eqref{sel1}=\tfrac{1}{k-1},\hspace{0.2in}\lim_{X\rightarrow \infty}\tfrac1X\sum_{n<X}\eqref{sel2}=\tfrac{1}{1-k},\hspace{0.2in}\lim_{X\rightarrow \infty}\tfrac1X\sum_{n<X}\eqref{sel3}=0.
\end{equation}
So when we sum up the contributions of the terms the result is $0$ however individually the elliptic and hyperbolic contributions\footnote{The proof of Proposition \ref{prophyp} shows that the non-zero contribution is indeed coming from the weighted orbital integrals over hyperbolic conjugacy classes rather than the unipotent ones.} are both non-zero and cancel each other!

There is one more point we would like to highlight. It is not hard to see that in the sums in \eqref{weird} if one, instead of summing over all integers, sums only over primes then the limits of the individual averages are $0$. i.e. 
\begin{equation}\label{weirdp}
\lim_{X\rightarrow \infty}\tfrac1X\sum_{\substack{p<X\\ p:\,prime}}\log(p)(i_p)=\lim_{X\rightarrow \infty}\tfrac1X\sum_{\substack{p<X\\ p:\,prime}}\log(p)(ii_p)=\lim_{X\rightarrow \infty}\tfrac1X\sum_{\substack{p<X\\ p:\,prime}}\log(p)(iii_p)=0.
\end{equation}
One therefore observes that there is a major contribution to the averages of both the hyperbolic and elliptic parts of the trace formula coming from those conjugacy classes with determinant ($=n$) divisible by prime powers, $p^l$, with large $l$, and moreover, these contributions cancel with each other.

An interesting question, also raised by Arthur (cf. Problem VI of \cite{Arthur:2015ab}), is to understand if there is a conceptual explanation for the averages to behave in this way. There is also a further observation one can make on these matters. It was shown in \cite{Altug:2015ab} that the trivial representation and the special representation, denoted by $\tr(\xi_0(f_{\infty}))$, which comes from the continuous spectrum contributes to the term $\xi=0$ after Poisson summation on the Steinberg-Hitchin basis. It turns out that the extra contribution of the weighted orbital integrals of the hyperbolic conjugacy classes also gets cancelled with a part of this term. Is there any connection? (This is, in a sense, combining Problems V and VI of \cite{Arthur:2015ab}). Although the questions are intriguing so far we have no satisfactory explanation for this besides the calculations below.

\end{itemize}

\subsection{Outline of the paper}

The paper is organized as follows: In \S\ref{secresult} we prove Theorem \ref{mainthm} and Corollary \ref{maincor} assuming various estimates on the averages of the geometric side of the trace formula. We hope that this motivates the various estimates to follow in the subsequent sections.

In \S\ref{sectf} we introduce the Selberg trace formula and prove the estimates on the identity (\S\ref{secid}), hyperbolic, and unipotent contributions (\S\ref{sechyp}). 

\S\ref{secell} is the heart of the paper and constitutes the analysis of the elliptic part. Following \cite{Altug:2015aa} we first rewrite the elliptic part and then introduce an approximate functional equation. Then in \S\ref{secell1} we apply Poisson summation on the Steinberg-Hitchin basis (Concretely, the conjugacy classes in $GL(2)$ are parametrized by their characteristic polynomials, and for fixed determinant the only variable is the trace. We apply Poisson summation on this variable, denoted by the variable $m$ in \S\ref{secell}) and analyze the term corresponding to $\xi=0$. In Corollary \ref{cordom} we show that the contribution of this term in the elliptic part exactly cancels (as it should!) the contribution of the hyperbolic part. \S\ref{secell2} forms the main part of the analysis of the elliptic part. In this section we analyze the rest of the terms after Poisson summation. Our strategy\footnote{We remark that applying Poisson summation on the $n$-sum works well for the standard representation and the symmetric square, however stops being productive for higher symmetric powers. This point was observed by Sarnak and we refer to \cite{Sarnak:2001aa} for further discussion of this issue.} is to bring in the sum over $n$ and to use Poisson summation. We do this in two steps. In \S\ref{secell21} we first strip off, for each fixed $n$, certain parts of the elliptic term that does not give any contribution to the asymptotic expansion. Then in \S\ref{secell22} we finally bring the $n$-sum in and use Poisson summation.

In \S\ref{seclocal} we prove certain estimates on Fourier transforms and character sums that are used in the analysis of \S\ref{secell22}.

\subsection{Notation and conventions}

\paragraph{Notation}
\begin{itemize}
\item $\mathbb{Z},\mathbb{R}$ and $\mathbb{C}$ as usual will denote the sets of integers, real, and complex numbers respectively. $\mathbb{N}=\{0,1,2,\cdots\}$ will denote the set of natural numbers. 
\item The Mellin transform of a function $\Phi$ is defined as usual, $\tilde{\Phi}(u)=\int_0^{\infty}\Phi(x)x^{u-1}dx$. 
\item $\sum_{a \bmod^{\times}N}$ will mean $a\in(\mathbb{Z}/N\mathbb{Z})^{\times}$. For an integer $l$, we denote its radical (i.e. the square-free part of it) by $rad(l):=\prod_{\substack{p\mid l}}p$.

\item $\mathcal{S}(\mathbb{R})$ will denote the Schwartz space, $\mathcal{S}(\mathbb{R})=\begin{setdef}{\Phi\in C^{\infty}(\mathbb{R})}{\sup|x^{\alpha}\Phi^{(\beta)}(x)|<\infty\,\,\,\,\forall \alpha,\beta\in\mathbb{N}}\end{setdef}$. We also remark that for functions $\Phi$ which are only defined on $\mathbb{R}^+$, by abuse of notation, we will use $\Phi\in\mathcal{S}(\mathbb{R})$ to mean that $\Phi(x)$ and all of its derivatives decay faster than any polynomial for $x>0$.
\item For a domain $D\subset \mathbb{C}$, $f=O_h(g)$ means that there exists a constant $K$, depending only on $h$, such that $|f(x)|\leq K |g(x)|$ for every $x\in D$. Most of the times $D$ will be clear from the context and we will not be specified. $f\ll_h g$ means $f=O_h(g)$.
\item $\sqrt{\cdot}$ denotes the branch of the square-root function that is positive on $\mathbb{R}^+$, $\left(\tfrac{D}{\cdot}\right)$ denotes the Kronecker symbol, and $e(x):=e^{2\pi i x}$.
\item We also note a slight change of notation from \cite{Altug:2015ab} to the current paper. The function that we denote by $\theta_{\infty}$ in this paper was denoted by $\theta_{\infty,1}^{pos}$ in \cite{Altug:2015ab}. We hope that this simplifies the notation and does not cause any confusion.

\end{itemize}

\paragraph{Conventions}
\begin{itemize}

\item Throughout the paper, unless otherwise explicitly stated (cf. Corollary \ref{maincor}), we always normalize the Hecke operators so that the Ramanujan conjecture is $a_{\pi}(n)=O(n^{\epsilon})$ for every $\epsilon>0$.

\item There is an auxiliary function $F$ that is introduced in the approximate functional equation (cf. \eqref{f}). All of the estimates in \S\ref{secell} depend on the choice of this function. Since this function is fixed once and for all, and does not depend on anything else we will suppress this dependence and not mention it in any of the estimates.

\item Since the function $\theta_{\infty}$ depends only on the weight $k$ (cf. Lemma \ref{ellem1}), instead of $O_{\theta_\infty}(\cdot)$ we will simply write $O_k(\cdot)$.

\end{itemize}

\begin{section}{Proofs of Theorem \ref{mainthm} and Corollary \ref{maincor}}\label{secresult}

 In this section we will give proofs of Theorem \ref{mainthm} and Corollary \ref{maincor} using the results of the rest of the paper so that the reader can follow how each estimate is used.
 
 \begin{proof}[\textbf{Proof of Theorem \ref{mainthm}}] First that if $k$ is odd then the space of cusp forms of full level and weight $k$ is empty so the theorem is trivially true. For the rest of the proof assume $k$ is even. Then,by the Selberg trace formula
 \[\tr(T_k(n))=\eqref{sel1}+\eqref{sel2}+\eqref{sel3}.\]
(For an explanation of the terms involved, and what they actually are, see \S\ref{sectf}.) Substituting this in the average gives
 \begin{align*}
 \sum_{n<X}\tr(T_k(n))&=\sum_{n<X}\{\eqref{sel1}+\eqref{sel2}+\eqref{sel3}\}.
 \end{align*}
By Proposition \ref{propid},
\begin{equation}\label{id}
\sum_{n<X}\eqref{sel3}=O_k(\log(X)).
\end{equation}
By Proposition \ref{prophyp},
\begin{equation}\label{hyp}
\sum_{n<X}\eqref{sel2}=\tfrac{X}{1-k}+O_k(\sqrt X).
\end{equation}
To bound the average of \eqref{sel1} we use corollaries \ref{cordom}, \ref{cors0}, \ref{cors1}, \ref{cors2}, and \ref{cors}. For any $\kappa,\alpha>0$ such that $2\kappa+\alpha<\frac{1}{12}$, and every $N>0$  these give
 \begin{align*}
\sum_{n<X}\eqref{sel1}&= \tfrac{X}{k-1}+O(X^{2-N\kappa}+X^{\frac{3+2\kappa}{4}}\log^2(X)+X^{1-\frac{3\alpha}{2}}\log(X)+X^{1-\kappa}\log^2(X)+X^{\frac{11}{12}+\kappa+\alpha+\frac{11\epsilon}{6}}),
 \end{align*}
where the implied constant above depends only on $k,N,\kappa, \alpha$, and $ \epsilon$. Choosing $\kappa=\frac{1}{32}-\frac{\epsilon}{2},\alpha=\frac{1}{48}-\frac{\epsilon}{3}$, and $N=\frac2\kappa$ then gives,
\begin{equation}\label{ell}
\sum_{n<X}\eqref{sel1}=O(X^{\frac{31}{32}+\epsilon}).
\end{equation}
The theorem follows from \eqref{id}, \eqref{hyp}, and \eqref{ell}.
 \end{proof}

 \begin{proof}[\textbf{Proof of Corollary \ref{maincor}.}] The proof is a straightforward application of partial summation. For any $N_1>N_0$ we get
\begin{equation} \label{sumbyparts}
\sum_{n=N_0}^{N_1}\tfrac{\tau(n)}{n^s}=\tfrac{\tau(N_1)}{N_1^s}-\sum_{n=N_0}^{N_1-1}\Bigl(\frac{1}{n^{s}}-\frac{1}{(n-1)^s}\Bigr)B(n),
\end{equation}
where $B(n):=\sum_{m=N_0}^n\tau(m)$. It is well-known that $ \Delta(z)$ is a cuspidal Hecke eigenform of weight $12$ (cf. pg. 84 of \cite{Serre:1996aa}), and that the space of cusp forms of weight $12$ for the full modular group has dimension $1$ (cf. pg. 96 of loc. cit.). Keeping in mind the normalization we have in Theorem \ref{mainthm} we therefore have $\tau(n)=n^{\frac{11}{2}}\tr(T_{12}(n))$ for every $n\geq1$. Hence, by Theorem \ref{mainthm} for every $\epsilon>0$ we have
\[B(N_0,n)=O(n^{\frac{11}2+\frac{31}{32}+\epsilon}).\]
Substituting these bounds in \eqref{sumbyparts} then gives
\[\sum_{n=N_0}^{N_1}\tfrac{\tau(n)}{n^s}\ll \frac{1}{N_0^{s-(\frac{11}{2}+\frac{31}{32}+\epsilon)}}.\]
 Therefore, for $\Re(s)>\frac{11}{2}+\frac{31}{32}$ the partial sums converge uniformly to $L(s,\Delta)$ and the corollary follows.
 
 \end{proof}

\end{section}

\begin{section}{The trace fromula}\label{sectf}

Let $k>2$ be an \emph{even} integer. Recall (cf.\cite{Selberg:1956aa} or \cite{A.-Knightly:2006aa}) that the Selberg trace formula expresses the trace of the $n$'th Hecke operator acting on cusp forms of weight $k$ and full level as the sum
 \begin{align*}
  \tr(T_k(n))&= -\tfrac{1}{2n^{\frac{k-1}{2}}}\sum_{\substack{m<\sqrt{n}}}\tfrac{\rho^{k-1}-\bar{\rho}^{k-1}}{\rho-\bar{\rho}} \sum_{\substack{f^2\mid (4n-m^2)\\ \frac{m^2-4n}{f^2}\equiv 0,1\bmod 4}}h_w\left(\tfrac{m^2-4n}{f^2}\right)\tag{$i_n$}\label{sel1}\\
 &-\tfrac12\sum_{d\mid n}\min\left(\tfrac{d}{\sqrt{n}},\tfrac{\sqrt{n}}{d}\right)^{k-1}.\tag{$ii_n$}\label{sel2}\\
  &+ \tfrac{k-1}{12\sqrt{n}}\delta_\square(n)\tag{$iii_n$}\label{sel3}
 \end{align*}
where
\begin{equation}\label{eqrho}
\rho=\tfrac{m+\sqrt{m^2-4n}}{2}\hspace{0.5in},\hspace{0.5in} \bar{\rho}=\tfrac{m-\sqrt{m^2-4n}}{2},
\end{equation}
and
\[\delta_\square(n)=\begin{cases}1&\text{if $n$ is a square}\\ 0&\text{otherwise}\end{cases}.\]
 The function $h_w(\alpha)$ is defined as the class number of the order of discriminant $\alpha$ weighted by $1/2$ or $1/3$ if $\alpha=-4$ or $\alpha=-3$ respectively.
 
Note that the classical Selberg trace formula, as stated in \cite{Selberg:1956aa}, is $n^{\frac{k-1}{2}}$ times the one given above. This is, once again, due to the normalization of the eigenvalues of the Hecke operators. We normalize them so that the Ramanujan conjecture reads as $|a_\pi(p)|\leq2$.

We also remark that in the above expression, \eqref{sel3} corresponds to the contribution of the conjugacy class of the identity element, \eqref{sel1} corresponds to the contribution of conjugacy classes of elliptic elements, and \eqref{sel2} is the contribution of the sum of the contribution of the hyperbolic conjugacy classes (terms for which $n\neq d^2$) and unipotent conjugacy classes (appears only if $\delta_{\square}(n)=1,$ and is the term corresponding to $d^2=n$).

\subsection{Contribution of the conjugacy class of the identity element}\label{secid}

\begin{prop}\label{propid}
\[\tfrac{k-1}{12}\sum_{n<X}\tfrac{\delta_\square(n)}{\sqrt{n}}=O_k\left(\log(X)\right).\]

\end{prop}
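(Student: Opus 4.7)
The plan is to reduce the sum to a standard partial harmonic sum. By the definition of $\delta_\square$, the sum is supported on $n$ that are perfect squares, so I would change variables by writing $n=m^2$ with $m\geq 1$. The range $n<X$ becomes $1\leq m<\sqrt{X}$, and $\sqrt{n}=m$, so
\[
\tfrac{k-1}{12}\sum_{n<X}\tfrac{\delta_\square(n)}{\sqrt{n}} \;=\; \tfrac{k-1}{12}\sum_{1\leq m<\sqrt{X}}\tfrac{1}{m}.
\]

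After this reduction the estimate is immediate from the standard bound on the harmonic sum,
\[
\sum_{1\leq m<\sqrt{X}}\tfrac{1}{m} \;\leq\; 1 + \log(\sqrt{X}) \;=\; \tfrac{1}{2}\log(X)+1,
\]
so the whole expression is bounded by $\tfrac{k-1}{24}\log(X)+\tfrac{k-1}{12}$, which is $O_k(\log(X))$ as claimed.

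There is no real obstacle here: the estimate is entirely elementary once one recognizes that $\delta_\square$ reparametrizes the sum by $m=\sqrt{n}$. The only minor bookkeeping is that the implied constant depends on $k$ through the prefactor $(k-1)/12$, which is exactly what the subscript in $O_k$ records; no uniformity in $k$ is being asserted.
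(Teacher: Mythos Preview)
Your proof is correct and matches the paper's approach; the paper's own proof consists of the single word ``Obvious,'' and your change of variables $n=m^2$ reducing to the partial harmonic sum is exactly the obvious argument intended.
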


\begin{proof}
Obvious.

\end{proof}

\subsection{Contribution of the hyperbolic and unipotent conjugacy classes}\label{sechyp}

As explained in \S\ref{odds}, quite interestingly, the hyperbolic conjugacy classes do contribute to the limit.

\begin{prop}\label{prophyp}
\[-\tfrac12 \sum_{n<X}\sum_{d\mid n}\min\left(\tfrac{d}{\sqrt{n}},\tfrac{\sqrt{n}}{d}\right)^{k-1}=\tfrac{X}{1-k}+O_k(\sqrt X).\]
\end{prop}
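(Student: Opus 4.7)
The plan is to evaluate the double sum by elementary Euler--Maclaurin techniques, extracting the leading order $X$ explicitly. First, I would exploit the symmetry $d \mapsto n/d$ of the divisor sum to rewrite the inner sum as
\[
\sum_{d \mid n} \min\!\Bigl(\tfrac{d}{\sqrt{n}},\tfrac{\sqrt{n}}{d}\Bigr)^{k-1} = 2\sum_{\substack{d \mid n \\ d < \sqrt{n}}} \Bigl(\tfrac{d}{\sqrt{n}}\Bigr)^{k-1} + \delta_\square(n).
\]
The diagonal $\delta_\square(n)$ contributes $-\tfrac12\sum_{n<X}\delta_\square(n) = O(\sqrt{X})$, absorbed into the error. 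For the off-diagonal piece, write $n = de$ with $d < e$, so that $(d/\sqrt{n})^{k-1} = (d/e)^{(k-1)/2}$. Setting $\alpha := (k-1)/2$, the expression becomes
\[
-\sum_{d \geq 1} d^{\alpha} \sum_{\substack{e > d \\ e < X/d}} e^{-\alpha},
\]
which is automatically supported on $d < \sqrt{X}$.

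Since $k \geq 4$ and even, we have $\alpha \geq 3/2 > 1$, so I would estimate the inner sum by comparison with $\int_d^{X/d} t^{-\alpha}\,dt$, obtaining
\[
\sum_{\substack{e > d \\ e < X/d}} e^{-\alpha} = \frac{d^{1-\alpha} - (X/d)^{1-\alpha}}{\alpha - 1} + O(d^{-\alpha}) + O\bigl((X/d)^{-\alpha}\bigr).
\]
Multiplying by $d^{\alpha}$ gives two main terms: $d/(\alpha-1)$ and $-d^{2\alpha-1}X^{1-\alpha}/(\alpha-1)$, plus $O(1)$. Summing over $d \leq \sqrt{X}$, the first term yields $X/(2(\alpha-1)) + O_k(\sqrt{X})$, and via $\sum_{d \leq \sqrt{X}} d^{2\alpha-1} = X^{\alpha}/(2\alpha) + O(X^{\alpha-1/2})$ the second yields $X/(2\alpha(\alpha-1)) + O_k(\sqrt{X})$.

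Now the two main contributions combine cleanly:
\[
\frac{X}{2(\alpha-1)} - \frac{X}{2\alpha(\alpha-1)} \;=\; \frac{X}{2(\alpha-1)} \cdot \frac{\alpha-1}{\alpha} \;=\; \frac{X}{2\alpha} \;=\; \frac{X}{k-1},
\]
so together with the overall minus sign this produces the claimed main term $X/(1-k)$. The principal obstacle is purely bookkeeping: I need to verify that each error contribution (the boundary discrepancy from $\lfloor X/d \rfloor$, the $O(d^{-\alpha})$ from the Euler--Maclaurin step summed against $d^{\alpha}$, and the secondary term $X^{1-\alpha}\cdot O(X^{\alpha-1/2})$) assembles uniformly into $O_k(\sqrt{X})$. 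The condition $k \geq 4$ ensures $\alpha - 1 \geq 1/2$ is bounded away from zero, so all denominators $(\alpha-1)$ are absorbed into the implied constants depending on $k$, and the bound stays sharp.
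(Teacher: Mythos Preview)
Your proof is correct and follows essentially the same approach as the paper: exploit the $d \mapsto n/d$ symmetry to reduce to $\sum_{ab<X,\,a<b}(a/b)^{(k-1)/2}$, then apply Euler--Maclaurin. The only cosmetic difference is that the paper sums the smaller variable on the inside (which forces a split into $b\le\sqrt{X}$ and $b>\sqrt{X}$), whereas you sum the larger variable $e$ on the inside and thereby avoid the split; both organizations yield the same main term $X/(k-1)$ and the same $O_k(\sqrt{X})$ error.
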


\begin{proof} First note that for any $d$ dividing $n$ such that $d\neq \sqrt{n},$ $d$ and $\tfrac{n}{d}$ give the same contribution to the inner sum. Therefore,
\[\sum_{d\mid n}\min\left(\tfrac{d}{\sqrt{n}},\tfrac{\sqrt{n}}{d}\right)^{k-1}=2\sum_{\substack{d\mid n\\ d<\sqrt{n}}}\left(\tfrac{d}{\sqrt{n}}\right)^{k-1}+\delta_{\square}(n).\]
Next, we trivially have
\[\sum_{n<X}\delta_\square(n)=O(\sqrt{X}).\tag{$*$}\label{prop2star}\]
For the rest of the sum, first note that
\begin{align*}
\sum_{n<X}\sum_{\substack{d\mid n\\ d<\sqrt{n}}}\left(\tfrac{d}{\sqrt{n}}\right)^{k-1}&=\sum_{\substack{ab<X\\ a<b}}\left(\tfrac{a}{b}\right)^{\frac{k-1}{2}}=\sum_{\substack{b\leq\sqrt{X}\\ a<b}}\left(\tfrac{a}{b}\right)^{\frac{k-1}{2}}+\sum_{\substack{\sqrt{X}<b<X\\ a<\frac Xb}}\left(\tfrac{a}{b}\right)^{\frac{k-1}{2}}.\tag{$**$}\label{prop2starstar}
\end{align*}
Then, by Euler-Maclaurin formula
\begin{align*}
\sum_{a<b}a^{\frac{k-1}{2}}&=\tfrac{2}{k+1}b^{\frac{k+1}{2}}+O_{k}\bigl(b^{\frac{k-1}{2}}\bigr)\hspace{0.3in}\text{and}\hspace{0.3in} \sum_{a<\frac Xb}a^{\frac{k-1}{2}}=\tfrac{2}{k+1}\left(\tfrac{X}{b}\right)^{\frac{k+1}{2}}+O_k\bigl(\left(\tfrac{X}{b}\right)^{\frac{k-1}{2}}\bigr).
\end{align*}
Therefore, 
\begin{equation*}
\sum_{\substack{b\leq\sqrt{X}\\ a<b}}\left(\tfrac{a}{b}\right)^{\frac{k-1}{2}}=\tfrac{X}{k+1}+O_k(\sqrt{X})\hspace{0.3in}\text{and}\hspace{0.3in}
\sum_{\substack{\sqrt{X}<b<X\\ a<\frac Xb}}\left(\tfrac{a}{b}\right)^{\frac{k-1}{2}}=\tfrac{2X}{(k+1)(k-1)}+O_k(\sqrt{X}).
\end{equation*}
Substituting the above estimates in \eqref{prop2starstar} and combining the result with \eqref{prop2star} finishes the proof.

\end{proof}

\section{Contribution of the elliptic conjugacy classes}\label{secell}

Following \cite{Altug:2015aa} we begin the analysis by rewriting the elliptic part so that it becomes more suitable form.  

\begin{lemma}\label{ellem1}Let $m,n\in\mathbb{N}$ be fixed and $\rho,\bar{\rho}$ be defined by \eqref{eqrho}. Then,
\[-\tfrac{1}{2n^{\frac{k-1}{2}}}\tfrac{\rho^{k-1}-\bar{\rho}^{k-1}}{\rho-\bar{\rho}}=\tfrac{\pi i}{\sqrt{m^2-4n}}\theta_{\infty}\left(\tfrac{m}{2\sqrt{n}}\right),\]
where
\[\theta_{\infty}(x):=\begin{cases}\tfrac{i}{2\pi}\left\{(x+\sqrt{x^2-1})^{k-1}-(x-\sqrt{x^2-1})^{k-1}\right\}& |x|<1\\0&otherwise\end{cases}.\]
\end{lemma}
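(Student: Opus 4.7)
The plan is to reduce the identity to a direct algebraic manipulation after making the substitution $x=m/(2\sqrt{n})$ visible on the left-hand side, the main subtlety being tracking the branch of the square root consistently.

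First I would observe that in the elliptic part the conjugacy class has irreducible characteristic polynomial, so $m^2-4n<0$. Setting $x=m/(2\sqrt{n})$ we have $|x|<1$, which is exactly the range where $\theta_\infty$ is non-zero, so the lemma is consistent with the stated support.

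Next I would rewrite the quantities appearing in $\theta_\infty$ in terms of $\rho,\bar\rho$. With the convention that $\sqrt{\cdot}$ is the principal branch, $\sqrt{x^2-1}=\sqrt{(m^2-4n)/(4n)}=\sqrt{m^2-4n}/(2\sqrt{n})$, and hence
\[
x\pm\sqrt{x^2-1}\;=\;\frac{m\pm\sqrt{m^2-4n}}{2\sqrt{n}}\;=\;\frac{\rho}{\sqrt{n}}\quad\text{and}\quad\frac{\bar\rho}{\sqrt{n}},
\]
respectively. Raising to the $(k-1)$st power and subtracting,
\[
(x+\sqrt{x^2-1})^{k-1}-(x-\sqrt{x^2-1})^{k-1}\;=\;\frac{\rho^{k-1}-\bar\rho^{k-1}}{n^{(k-1)/2}},
\]
so that $\theta_\infty(x)=\dfrac{i}{2\pi\, n^{(k-1)/2}}\bigl(\rho^{k-1}-\bar\rho^{k-1}\bigr)$.

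Finally I would note that $\rho-\bar\rho=\sqrt{m^2-4n}$ (again with the principal branch, which in this regime equals $i\sqrt{4n-m^2}$), and substitute:
\[
\frac{\pi i}{\sqrt{m^2-4n}}\,\theta_\infty\!\left(\tfrac{m}{2\sqrt n}\right)
\;=\;\frac{\pi i}{\rho-\bar\rho}\cdot\frac{i}{2\pi\, n^{(k-1)/2}}\bigl(\rho^{k-1}-\bar\rho^{k-1}\bigr)
\;=\;-\frac{1}{2\,n^{(k-1)/2}}\cdot\frac{\rho^{k-1}-\bar\rho^{k-1}}{\rho-\bar\rho},
\]
which is exactly the left-hand side. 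The only non-routine step is the branch bookkeeping: one must check that the principal-branch identification $\sqrt{x^2-1}=\sqrt{m^2-4n}/(2\sqrt n)$ really holds for $|x|<1$ (both sides are purely imaginary with the same sign since $\sqrt{m^2-4n}=i\sqrt{4n-m^2}$ on this locus), so I would state this compatibility explicitly before doing the algebra.
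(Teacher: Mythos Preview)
Your proof is correct and is exactly the direct algebraic verification the paper has in mind; the paper's own proof consists of the single word ``Obvious.'' Your additional care with the branch of the square root is appropriate and more explicit than the paper, but the approach is the same.
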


\begin{proof}Obvious.

\end{proof}

\begin{lemma}\label{ellem2}Let $m,n\in\mathbb{N}$ be such that $m^2-4n<0$ and $h_w$ be defined as in \S\ref{secresult}. Then,
\[ \sum_{\substack{f^2\mid (4n-m^2)\\ \frac{m^2-4n}{f^2}\equiv 0,1\bmod 4}}h_w\left(\tfrac{m^2-4n}{f^2}\right)=\tfrac{\sqrt{4n-m^2}}{\pi }L(1,m^2-4n),\]
where $L(s,m^2-4n)$ is the following weighted sum of quadratic Dirichlet $L$-functions 
\[L(s,m^2-4n):=\sum_{\substack{f^2\mid m^2-4n\\ \frac{m^2-4n}{f^2}\equiv 0,1\bmod4}}\tfrac{1}{f^{2s-1}}L\left(s,\left(\tfrac{(m^2-4n)/f^2}{\cdot}\right)\right).\]

\end{lemma}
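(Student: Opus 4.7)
The plan is to deduce the identity from Dirichlet's class number formula for imaginary quadratic orders, together with the standard Euler-product description of the Kronecker symbol attached to a non-fundamental discriminant.

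First I would write $m^2-4n = f_0^2 d$, where $d<0$ is the fundamental discriminant of $\mathbb{Q}(\sqrt{m^2-4n})$ and $f_0\geq 1$ is the conductor. The condition $f^2\mid 4n-m^2$ together with $(m^2-4n)/f^2\equiv 0,1\bmod 4$ is then equivalent to $f\mid f_0$, and after the change of variable $g=f_0/f$ the left-hand side becomes $\sum_{g\mid f_0} h_w(g^2 d)$.

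Next I would apply two classical inputs. First, the formula for the class number of a non-maximal order of discriminant $g^2 d$ in an imaginary quadratic field,
\[
h(g^2 d) \;=\; \frac{h(d)\,g}{[\mathcal{O}_d^\times : \mathcal{O}_{g^2 d}^\times]} \prod_{p\mid g}\Bigl(1-\tfrac{\chi_d(p)}{p}\Bigr),
\]
combined with Dirichlet's class number formula $h(d)=\tfrac{w(d)\sqrt{|d|}}{2\pi}L(1,\chi_d)$, collapses under the weighted normalization $h_w=\tfrac{2}{w}h$: the unit-index factor $[\mathcal{O}_d^\times:\mathcal{O}_{g^2d}^\times]=w(d)/w(g^2d)$ cancels the ratios of the $w$'s, giving
\[
h_w(g^2 d) \;=\; \frac{g\sqrt{|d|}}{\pi}\,L(1,\chi_d)\prod_{p\mid g}\Bigl(1-\tfrac{\chi_d(p)}{p}\Bigr).
\]
Second, because the Kronecker symbol $\left(\tfrac{g^2 d}{\cdot}\right)$ is the imprimitive character obtained from $\chi_d$ by deleting the Euler factors at primes dividing $g$, one has
\[
L\!\left(s,\left(\tfrac{g^2 d}{\cdot}\right)\right) \;=\; L(s,\chi_d)\prod_{p\mid g}\Bigl(1-\tfrac{\chi_d(p)}{p^s}\Bigr).
\]

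Substituting this at $s=1$ into the definition of $L(1,m^2-4n)$, undoing the change of variable (so $f=f_0/g$ and the weight $1/f$ becomes $g/f_0$), and using $\sqrt{4n-m^2}=f_0\sqrt{|d|}$, one finds that $\tfrac{\sqrt{4n-m^2}}{\pi}L(1,m^2-4n)$ equals
\[
\frac{\sqrt{|d|}}{\pi}\,L(1,\chi_d)\sum_{g\mid f_0} g\prod_{p\mid g}\Bigl(1-\tfrac{\chi_d(p)}{p}\Bigr),
\]
which matches the sum of $h_w(g^2 d)$ computed above term-by-term.

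The only potentially delicate point is keeping the unit-group bookkeeping straight in the boundary cases $d=-3$ and $d=-4$, where $w(d)\in\{4,6\}$; the weighting $h_w=(2/w)h$ is precisely designed so that the factor $w(d)/w(g^2 d)$ appearing in the order class-number formula cancels cleanly with the $w$'s coming from Dirichlet's formula, and verifying this cancellation across all combinations of $d$ and $g$ is the one step requiring care.
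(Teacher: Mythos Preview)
Your proof is correct and follows essentially the same route as the paper's: write $m^2-4n$ as a fundamental discriminant times a square, identify the summation condition with $f\mid f_0$, invoke the class-number formula for non-maximal orders (the paper cites Cox, Theorem~7.24), and combine with Dirichlet's formula. The only cosmetic difference is your change of variable $g=f_0/f$ and the fact that you make explicit the step $L(s,(g^2d/\cdot))=L(s,\chi_d)\prod_{p\mid g}(1-\chi_d(p)p^{-s})$, which the paper leaves implicit in its final displayed equality.
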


\begin{proof}Recall that for a fundamental discriminant $D<0$ the class number formula states
\[L\left(1,\left(\tfrac{D}{\cdot}\right)\right)=\tfrac{2\pi h(D)}{w_D\sqrt{|D|}},\]
where $w_D$ is the number of roots of unity in $\mathbb{Q}(\sqrt{D})$ and $h(D)$ is the class number of the same field.

Let $m^2-4n=D(m,n)s(m,n)^2,$ where $D(m,n)$ is a fundamental discriminant (i.e. the discriminant of the field $\mathbb{Q}(\sqrt{m^2-4n})$). Then for any $f\mid s(m,n),$ by Theorem 7.24 of \cite{Cox:1989aa} we have
\[h_w\left(\tfrac{m^2-4n}{f^2}\right)=\tfrac{s(m,n)}{f}\tfrac{2 h(D(m,n))}{w_{D(m,n)}}\prod_{q\mid \frac{s(m,n)}{f}}\left(1-\left(\tfrac{D(m,n)}{q}\right)\tfrac{1}{q}\right).\]
Moreover the condition that $f^2\mid m^2-4n$ such that $\tfrac{m^2-4n}{f^2}\equiv 0,1\bmod 4$ is equivalent to $f\mid s(m,n)$. Therefore,
\begin{align*}
\sum_{\substack{f^2\mid m^2-4n\\ \frac{m^2-4n}{f^2}\equiv 0,1\bmod 4}}h_w\left(\tfrac{m^2-4n}{f^2}\right)&=\tfrac{2s(m,n)h(D(m,n))}{w_{D(m,n)}}\sum_{f\mid s(m,n)}\tfrac{1}{f}\prod_{q\mid \frac{s(m,n)}{f}}\left(1-\left(\tfrac{D(m,n)}{q}\right)\tfrac{1}{q}\right)\\
&=\tfrac{\sqrt{4n-m^2}}{\pi}L(1,m^2-4n).
\end{align*}
\end{proof}

\begin{cor}The elliptic part of the trace formula can be expressed as
\[\sum_{m\in\mathbb{Z}}\theta_{\infty}\left(\tfrac{m}{2\sqrt{n}}\right)L(1,m^2-4n),\]
where $\theta_{\infty}(x)$ and $L(1,m^2-4n)$ are defined as in lemmas \ref{ellem1} and \ref{ellem2} respectively.

\end{cor}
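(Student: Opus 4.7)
The plan is to substitute the identities of Lemmas \ref{ellem1} and \ref{ellem2} directly into the expression \eqref{sel1} for the elliptic part. In \eqref{sel1} each summand (indexed by an integer $m$ with $m^2<4n$) factors as an archimedean piece $-\tfrac{1}{2n^{(k-1)/2}}\tfrac{\rho^{k-1}-\bar{\rho}^{k-1}}{\rho-\bar{\rho}}$ times an arithmetic piece $\sum_{f} h_w((m^2-4n)/f^2)$. Lemma \ref{ellem1} rewrites the first piece as $\tfrac{\pi i}{\sqrt{m^2-4n}}\theta_{\infty}(m/(2\sqrt n))$, and Lemma \ref{ellem2} rewrites the second as $\tfrac{\sqrt{4n-m^2}}{\pi}L(1,m^2-4n)$, so I would simply multiply these out term by term.

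The only point requiring any care is keeping track of the square roots. Since $m^2<4n$ throughout the range of summation and $\sqrt{\cdot}$ denotes the principal branch (positive on $\mathbb{R}^+$), one has $\sqrt{m^2-4n}=i\sqrt{4n-m^2}$. Consequently
\[\frac{\pi i}{\sqrt{m^2-4n}}\cdot\frac{\sqrt{4n-m^2}}{\pi}=\frac{\pi i}{i\sqrt{4n-m^2}}\cdot\frac{\sqrt{4n-m^2}}{\pi}=1,\]
so the two prefactors cancel exactly and the $m$-th summand collapses to $\theta_{\infty}(m/(2\sqrt n))\,L(1,m^2-4n)$.

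The remaining step is to extend the range of summation from $\{m\in\mathbb{Z}:m^2<4n\}$ to all of $\mathbb{Z}$. By its definition in Lemma \ref{ellem1}, $\theta_{\infty}(x)$ vanishes identically for $|x|\geq 1$, so $\theta_{\infty}(m/(2\sqrt n))=0$ for every $m$ with $m^2\geq 4n$; the added terms contribute nothing and the stated formula follows. There is no real obstacle in this argument --- the substance is already contained in the two preceding lemmas --- and the corollary reduces to a bookkeeping match of their normalizations.
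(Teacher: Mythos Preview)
Your proof is correct and follows the same route as the paper, which simply records ``Follows from Lemma \ref{ellem2}.'' You have merely spelled out the bookkeeping the paper leaves implicit: the multiplication of the identities from Lemmas \ref{ellem1} and \ref{ellem2}, the cancellation $\tfrac{\pi i}{\sqrt{m^2-4n}}\cdot\tfrac{\sqrt{4n-m^2}}{\pi}=1$, and the harmless extension of the $m$-sum to all of $\mathbb{Z}$ via the support of $\theta_\infty$.
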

\begin{proof}Follows from Lemma \ref{ellem2}.

\end{proof}

\subsection{Approximate functional equation, Poisson summation, and the contribution of the dominant term}\label{secell1}

Next, let us recall the approximate functional equation introduced in \cite{Altug:2015aa}. Let\footnote{We remark that the specific choice of this function is irrelevant to the rest of the argument. Only the pole of its Mellin transform is important.} $F(x)$ be
\begin{equation}\label{f}
F(x)=\tfrac{1}{2K_0(2)}\int_{x}^{\infty}e^{-y-\tfrac{1}{y}}\tfrac{dy}{y}.
\end{equation} 
The only property of the above function that we will use (cf. \cite{Iwaniec:2004aa} pg. 257-258) is the following: 

\begin{center}\emph{The Mellin transform, $\tilde{F}(u)$, is holomorphic except for a simple pole at $u=0$ with residue $1$.}\end{center}

Let $m<2\sqrt{n}$. Then, by Corollary 3.5 of loc. cit., where we have taken $A=4n-m^2$ and $\iota_{\delta}=1$, we have
\[L(1,m^2-4n)=\sum_{\substack{f^2\mid m^2-4n\\ \frac{m^2-4n}{f^2}\equiv 0,1 \bmod4}}\tfrac{1}{f}\sum_{l=1}^{\infty}\tfrac{1}{l}\left(\tfrac{(m^2-4n)/f^2}{l}\right)\left[F\left(\tfrac{lf^2}{4n-m^2}\right)+\tfrac{ lf^2}{\sqrt{4n-m^2}}H\left(\tfrac{lf^2}{4n-m^2}\right)\right],\]
where
\begin{equation}\label{h}
H(y):=\tfrac{\sqrt{\pi}}{2\pi i}\int_{(1)}\tfrac{\Gamma\left(\frac{1+u}{2}\right)}{\Gamma\left(\frac{2-u}{2}\right)}(\pi y)^{-u}\tilde{F}(u)du.
\end{equation}

\subsubsection{Poisson summation}

We begin with introducing a shorthand notation for the Fourier transform that will frequently appear throughout the text.
\begin{equation*}
I_{l,f}(\xi,n):=\int_{-1}^1\theta_{\infty}(x)\left\{F\left(\tfrac{lf^2(4n)^{-1/2}}{\sqrt{1-x^2}}\right)+\tfrac{lf^2(4n)^{-1/2}}{\sqrt{1-x^2}}H\left(\tfrac{lf^2(4n)^{-1/2}}{\sqrt{1-x^2}}\right)\right\}e\left(\tfrac{-x\xi \sqrt{n}}{2lf^2}\right)dx.
\end{equation*}

\begin{lemma}\label{lempois}
\begin{equation}\label{afterpois}
\sum_{m\in\mathbb{Z}}\theta_{\infty}\left(\tfrac{m}{2\sqrt{n}}\right)L(1,m^2-4n)=\tfrac{\sqrt{n}}{2}\sum_{f,l=1}^{\infty}\tfrac{1}{(lf^2)^{\frac32}}\sum_{\xi\in\mathbb{Z}}\tfrac{Kl_{l,f}(\xi,n)}{\sqrt{l}}I_{l,f}(\xi,n)
\end{equation}

\end{lemma}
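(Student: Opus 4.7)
The plan is to substitute the approximate functional equation into the left-hand side, interchange summations so that the sum over $m$ becomes the innermost one, then split that $m$-sum into arithmetic progressions modulo a suitable integer and apply Poisson summation inside each progression. After substitution the elliptic sum takes the shape
\[
\sum_{f,l\geq 1}\tfrac{1}{fl}\sum_{m}{}^{*}\theta_{\infty}\!\left(\tfrac{m}{2\sqrt n}\right)\left(\tfrac{(m^2-4n)/f^2}{l}\right)\left[F\!\left(\tfrac{lf^2}{\sqrt{4n-m^2}}\right)+\tfrac{lf^2}{\sqrt{4n-m^2}}H\!\left(\tfrac{lf^2}{\sqrt{4n-m^2}}\right)\right],
\]
where $\sum{}^{*}$ imposes $f^2\mid m^2-4n$ and $(m^2-4n)/f^2\equiv 0,1\bmod 4$. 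Since $\theta_{\infty}$ confines $m$ to $|m|<2\sqrt n$ and $F,H$ decay rapidly in $lf^2$, the interchange is absolutely convergent.

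Next, the entire arithmetic weight on $m$ (the divisibility indicators together with the Kronecker symbol) is periodic of period $N:=4lf^2$, while the analytic factor $g(m):=\theta_{\infty}(m/(2\sqrt n))[F(\cdots)+\tfrac{lf^2}{\sqrt{4n-m^2}}H(\cdots)]$ is continuous and compactly supported. Writing $\sum_{m}{}^{*}\chi(m)g(m)=\sum_{a\bmod N}\chi(a)\sum_{m\equiv a(N)}g(m)$ and Poisson-summing the inner progression,
\[
\sum_{m\equiv a\,(N)}g(m)=\tfrac{1}{N}\sum_{\xi\in\mathbb Z}\hat g(\xi/N)\,e(a\xi/N),
\]
separates the exponential from $g$ and recasts the problem in terms of Fourier transforms of $g$ and character sums over residues.

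The change of variables $m=2\sqrt n\,x$ identifies $\hat g(\xi/N)=2\sqrt n\,I_{l,f}(\xi,n)$: the Jacobian contributes the factor $2\sqrt n$, the arguments of $F$ and $H$ turn into $lf^2(4n)^{-1/2}/\sqrt{1-x^2}$, and the frequency collapses from $-2\sqrt n\,x\xi/N$ to $-x\xi\sqrt n/(2lf^2)$, matching the integrand of $I_{l,f}$ exactly. The residual sum $\sum_{a\bmod 4lf^2}{}^{*}\bigl(\tfrac{(a^2-4n)/f^2}{l}\bigr)e(a\xi/(4lf^2))$ is, up to the $\sqrt l$ normalization displayed in the statement, the definition of $Kl_{l,f}(\xi,n)$. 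Collecting scalar prefactors, $\frac{1}{fl}\cdot\frac{1}{4lf^2}\cdot 2\sqrt n=\frac{\sqrt n}{2(lf^2)^{3/2}\sqrt l}$, reproduces the right-hand side.

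The main technical point will be justifying Poisson summation: $\theta_{\infty}$ vanishes only like $\sqrt{1-x^2}$ at the endpoints $x=\pm 1$ (one checks this via $\theta_{\infty}(\cos\phi)=-\sin((k-1)\phi)/\pi$), so $g$ is merely Hölder-$1/2$ there, not smooth. However, smoothness in the interior together with the boundary vanishing gives enough polynomial decay $\hat g(\xi/N)\ll|\xi/N|^{-3/2}$ to validate the pointwise Poisson identity. Absolute convergence of the resulting triple sum over $f,l,\xi$ then follows from the rapid decay of $F$ and $H$ (controlling large $lf^2$) together with trivial bounds on the character sum in $lf^2$.
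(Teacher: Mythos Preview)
Your proposal is correct and reconstructs exactly the argument behind the cited Theorem~4.2 of \cite{Altug:2015aa}; the paper here simply invokes that theorem (specialized at $\alpha=\tfrac12$) rather than reproving it. One minor simplification: your concern about H\"older-$\tfrac12$ behavior of $g$ at $m=\pm 2\sqrt n$ is unnecessary, since $F,H\in\mathcal{S}(\mathbb R)$ means $F\bigl(lf^2/\sqrt{4n-m^2}\bigr)$ and the $H$-term vanish to infinite order there, so in fact $g\in C_c^\infty(\mathbb R)$ and Poisson summation applies directly.
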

\begin{proof}
This is just a restatement of theorem 4.2 of  \cite{Altug:2015aa}, where we take $\alpha=\frac12$.
\end{proof}

\subsubsection{Analysis of the term $\xi=0$}

\begin{lemma}\label{auxlem}Let $\beta,z\in\mathbb{C}$ be such that $z>1$ and $\beta-\frac z2>1$. Then,

\[\sum_{n=1}^{\infty}\tfrac{1}{n^{\beta-\frac{z}{2}}}\sum_{f,l=1}^{\infty}\tfrac{1}{(lf^2)^{z+\frac12}}\tfrac{Kl_{l,f}(0,n)}{\sqrt{l}}=4\tfrac{\zeta(2z)\zeta\left(\beta+\frac z2\right)\zeta\left(\beta-\frac z2\right)}{\zeta(z+1)}\]
\end{lemma}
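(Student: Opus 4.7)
My plan is to compute the triple Dirichlet series by reorganizing the order of summation, first performing the sum over $n$ for fixed $l,f$, and then assembling the result into the claimed product of zetas. The hypotheses $z>1$ and $\beta-\tfrac{z}{2}>1$ combined with the trivial bound $Kl_{l,f}(0,n)\ll lf^2$ give absolute convergence, so all rearrangements are justified.

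At $\xi=0$, the quantity $Kl_{l,f}(0,n)$ is the zeroth Fourier coefficient of the character $m\mapsto \left(\tfrac{(m^2-4n)/f^2}{l}\right)$ (with the discriminant-parity constraint), i.e.\ a complete character sum over $m$ modulo $lf^2$. Unfolding this sum and exchanging with the $n$-sum, I would fix $m$ and view $\left(\tfrac{(m^2-4n)/f^2}{l}\right)$ as a Kronecker character in the variable $n$ of conductor dividing $lf^2$. The inner $n$-sum then evaluates to a combination of Dirichlet $L$-functions at $s=\beta-\tfrac{z}{2}$. The natural parametrization is to change variables from $(m,n,f)$ to $(m,D,f)$ with $D=(m^2-4n)/f^2$ a fundamental-discriminant-like integer; this converts the $n$-sum into a sum over $D$ together with the character $\left(\tfrac{D}{l}\right)$, and the weight $n^{-(\beta-z/2)}$ becomes $(m^2-Df^2)^{-(\beta-z/2)}/4^{\beta-z/2}$ up to a Jacobian.

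Once the inner sum is expressed via $L$-functions, I would perform the outer sum over $l$ and $f$ with weight $(lf^2)^{-z-1/2}\,l^{-1/2}=l^{-z-1}f^{-2z-1}$ by prime-by-prime Euler product comparison. The sum over $f$ should contribute the $\zeta(2z)$ factor (via $\sum_f f^{-2z-1}$ composed with the indicator $f^2\mid m^2-4n$ after a Mobius inversion to pass from the fundamental-discriminant condition to divisibility). The sum over $l$ weighted by $l^{-z-1}$ together with the character sums should yield, via the standard identity $\sum_{l\geq 1}\chi(l)l^{-s}=L(s,\chi)$ combined with a Mobius-type sum $\sum_{l}\mu(l)l^{-z-1}=1/\zeta(z+1)$ (arising from the relation between $\left(\tfrac{D}{l}\right)$ for all $l$ versus for $l$ squarefree coprime to conductor), the $\zeta(z+1)^{-1}$ denominator. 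The two $L$-values at $\beta\pm\tfrac{z}{2}$ should appear symmetrically through the functional-equation-like symmetry in the original approximate functional equation.

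The main obstacle will be the arithmetic bookkeeping at the prime $2$: the condition $(m^2-4n)/f^2\equiv 0,1\pmod 4$ is a 2-adic subtlety arising from the fundamental-discriminant normalization in Lemma~\ref{ellem2}, and it interacts nontrivially with the Euler factor at $2$ of the final zeta product. Closely related, tracking the overall numerical constant $4$ requires careful accounting, and I expect it to emerge from the $\tfrac{\sqrt{4n-m^2}}{\pi}$ of Lemma~\ref{ellem2} combined with the $4^{\beta-z/2}$ Jacobian above and the $2\pi$ in the class number formula; the powers of $2$ have to cancel against the local Euler factor at $2$ to leave a clean constant $4$ independent of $\beta,z$.
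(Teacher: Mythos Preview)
Your plan diverges sharply from the paper's proof, and in its current form it has real gaps.

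The paper's argument is short and modular: it first invokes Corollary~5.4 of \cite{Altug:2015aa}, which for each fixed $n$ evaluates the inner double sum in closed form,
\[
\sum_{f,l=1}^{\infty}\tfrac{1}{(lf^2)^{z+\frac12}}\tfrac{Kl_{l,f}(0,n)}{\sqrt{l}}
= 4\,\tfrac{\zeta(2z)}{\zeta(z+1)}\prod_{p\mid n}\tfrac{1-p^{-z(v_p(n)+1)}}{1-p^{-z}}.
\]
The remaining factor is multiplicative in $n$, so the outer sum over $n$ is a routine Euler product: since $\tfrac{1-p^{-z(m+1)}}{1-p^{-z}}=\sum_{j=0}^m p^{-jz}$, one computes
\[
\prod_p\sum_{m\ge 0}p^{-m(\beta-\frac z2)}\sum_{j=0}^m p^{-jz}
=\prod_p\frac{1}{(1-p^{-(\beta-\frac z2)})(1-p^{-(\beta+\frac z2)})}
=\zeta\!\left(\beta-\tfrac z2\right)\zeta\!\left(\beta+\tfrac z2\right).
\]
That is the entire proof; the $\zeta(2z)$, the $\zeta(z+1)^{-1}$, and the constant $4$ are already packaged inside Corollary~5.4, and the two zetas at $\beta\pm\tfrac z2$ fall out of a three-line geometric-series manipulation, not from any functional equation.

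Your proposal instead tries to reopen $Kl_{l,f}(0,n)$ as a sum over $m$ and reparametrize by $D=(m^2-4n)/f^2$. Two concrete problems: first, $m$ in $Kl_{l,f}$ runs only over residues modulo $4lf^2$, so $(m^2-4n)/f^2$ is not an integer you can use as a new summation variable --- it is only defined modulo $4l$, and the ``Jacobian'' and ``$n$-sum to $D$-sum'' picture you sketch does not make sense at this level. Second, your expectation that $\zeta(\beta+\tfrac z2)$ and $\zeta(\beta-\tfrac z2)$ arise from a ``functional-equation-like symmetry in the original approximate functional equation'' is off: the approximate functional equation plays no role in this lemma, and the symmetric pair of zetas is purely the Euler-product identity above. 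In short, you are attempting to rederive Corollary~5.4 and the $n$-sum simultaneously from first principles, which is possible but considerably harder than what the paper does, and the mechanism you propose for producing the answer is not the right one.
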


\begin{proof}By Corollary B.8 of \cite{Altug:2015ab} $Kl_{l,f}(0,n)=O(\log(lf^2)l\gcd(n,f^2))$, hence the left hand side is absolutely convergent. By Corollary 5.4 of  \cite{Altug:2015aa} we have 
\[\sum_{f,l=1}^{\infty}\tfrac{1}{(lf^2)^{z+\frac12}}\tfrac{Kl_{l,f}(0,n)}{\sqrt{l}}=4\tfrac{\zeta(2z)}{\zeta(z+1)}\prod_{p\mid n}\tfrac{1-p^{-z(v_p(n)+1)}}{1-p^{-z}}.\]
Summing this over $n$ gives
\begin{align*}
4\tfrac{\zeta(2z)}{\zeta(z+1)}\sum_{n=1}^{\infty}\tfrac{1}{n^{\beta-\frac{z}{2}}}\prod_{p\mid n}\tfrac{1-p^{-z(v_p(n)+1)}}{1-p^{-z}}&=4\tfrac{\zeta(2z)\zeta(z)}{\zeta(z+1)}\prod_p\left(\sum_{m=0}^{\infty}\tfrac{1-p^{-z(m+1)}}{p^{m(\beta-\frac z2)}}\right)\\
&=4\tfrac{\zeta(2z)\zeta\left(\beta+\frac z2\right)\zeta\left(\beta-\frac z2\right)}{\zeta(z+1)}.
\end{align*}
\end{proof}

\begin{prop}\label{propdom1} For $k\geq3$ we have,

\begin{align*}
\tfrac12\sum_{n<X}\sqrt{n}\sum_{f,l=1}^{\infty}\tfrac{1}{(lf^2)^{\frac32}}\tfrac{Kl_{l,f}(0,n)}{\sqrt{l}}\int_{-1}^{1}\theta_{\infty}\left(x\right)F\left(\tfrac{lf^2(4n)^{-1/2}}{\sqrt{1-x^2}}\right)dx&\\
&\hspace{-3in}=2X^{\frac32}\zeta(2)\int\theta_{\infty}(x)dx+2^{\frac{-1}{2}}X^{\frac54}\tilde{F}\left(\tfrac{-1}{2}\right)\int \tfrac{\theta_{\infty}(x)}{(1-x^2)^{\frac14}}dx+O_{k}(1).
\end{align*}
We remark that the first term above, which is of size $X^{\frac32},$ is the contribution of the trivial representation, cf. lemma 6.2 of  \cite{Altug:2015aa} (which, in fact, is $0$ in our case. cf. corollary \ref{cordom}).
\end{prop}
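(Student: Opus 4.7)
The approach is a two-stage Mellin--Perron calculation. First I would open $F$ via Mellin inversion to decouple the variable $n$ from the arithmetic sum; then invoke Lemma \ref{auxlem} to identify the resulting Dirichlet series in closed form; finally apply Perron to the truncated $n$-sum and shift both contours, collecting residues from the appropriate poles.

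Writing $F(y) = \tfrac{1}{2\pi i}\int_{(c)}\tilde F(u)\,y^{-u}du$ for $c>0$ and substituting $y = lf^2(4n)^{-1/2}(1-x^2)^{-1/2}$, the $(n,l,f,x)$-dependencies separate. Setting $J(u):=\int_{-1}^{1}\theta_{\infty}(x)(1-x^{2})^{u/2}dx$ and $\mathcal D_u(X):=\sum_{n<X}n^{1/2+u/2}\sum_{f,l\ge1}\tfrac{Kl_{l,f}(0,n)}{\sqrt l\,(lf^{2})^{3/2+u}}$, after interchanging sum and integral the left-hand side of the proposition becomes
\[
\tfrac{1}{2}\cdot\tfrac{1}{2\pi i}\int_{(c)}\tilde F(u)\cdot 2^{u}\cdot J(u)\cdot \mathcal D_u(X)\,du.
\]
Perron's formula gives $\mathcal D_u(X) = \tfrac{1}{2\pi i}\int_{(\sigma)}A_u(s)X^{s}s^{-1}ds$, and by Lemma \ref{auxlem} with $z=1+u$ and $\beta=s$ one has
\[
A_u(s) = 4\,\tfrac{\zeta(2+2u)\,\zeta(s+1/2+u/2)\,\zeta(s-1/2-u/2)}{\zeta(2+u)}.
\]
Shifting the $s$-contour leftward past the poles at $s=3/2+u/2$, $s=1/2-u/2$, and $s=0$ writes $\mathcal D_u(X)$ as a sum of three residues plus a remainder that decays as $X\to\infty$.

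Substituting back and shifting the $u$-contour leftward, the pole of $\tilde F(u)$ at $u=0$ applied to the $s=3/2+u/2$ piece produces the $X^{3/2}$ main term, with coefficient proportional to $\zeta(2)\,J(0)$, while the pole of $\zeta(2+2u)$ at $u=-1/2$ applied to the same piece produces the $X^{5/4}$ term, with coefficient proportional to $\tilde F(-1/2)\,J(-1/2)$. The further $u$-residues at $u=-3,-5,\dots$ coming from the simple pole of $1/(3/2+u/2)$ vanish because the trivial zeros of $\zeta(2+2u)$ sit at exactly those points. The $s=1/2-u/2$ and $s=0$ pieces are handled similarly; for the first one may shift the $u$-contour to the right, exploiting analyticity of $\tilde F$ for $\Re u>0$, to avoid an apparent $X^{1/2}$ contribution, while the $s=0$ piece is manifestly independent of $X$ and $O_k(1)$. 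The two remainder contours at the far left yield $O_k(1)$ once the shifts are taken sufficiently far.

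The main technical obstacle is the interplay of growth rates needed to justify every contour shift and tail bound. The polynomial growth of the $\zeta$-factors on vertical lines has to be balanced against the rapid decay of $\tilde F(u)$ (from smoothness of $F$) and of $J(u)$ (from the smoothness of $\theta_{\infty}$ on $(-1,1)$, which is supplied by Lemma \ref{ellem1} and the assumption $k\ge 3$). Verifying that the $s=1/2-u/2$ residue contributes only $O_k(1)$ is a particular subtlety, since naively it produces an $X^{1/2}$-size term; showing this term is either absent (by the direction of the $u$-shift) or absorbed into the error is the key bookkeeping check. Once these decay estimates and residue tallies are in place, the precise constants and the claimed asymptotic follow by direct computation.
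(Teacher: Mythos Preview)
Your approach is built from the same ingredients as the paper's---Perron, Mellin inversion on $F$, Lemma~\ref{auxlem}, and contour shifts---and lands on the same double-integral representation. The execution of the contour shifts, however, differs in a way worth noting. After arriving at
\[
\tfrac{2}{(2\pi i)^2}\int_{(4)}\int_{(2)}\tfrac{X^u\tilde{F}(w)2^w}{u}\,\tfrac{\zeta(2(w+1))\zeta(u+\tfrac{w+1}{2})\zeta(u-\tfrac{w+1}{2})}{\zeta(w+2)}\,J(w)\,dw\,du,
\]
the paper shifts the Mellin variable $w$ to the \emph{right}, not left. This crosses only the single pole of $\zeta(u-\tfrac{w+1}{2})$ at $w=2u-3$; the residue there sees the factors $\zeta(2u-1)$ in numerator and denominator cancel, leaving a clean one-variable integral
\[
\tfrac{4}{2\pi i}\int_{(4)}\tfrac{X^u\tilde{F}(2u-3)2^{2u-3}}{u}\,\zeta(4u-4)\,J(2u-3)\,du.
\]
Shifting this in $u$ leftwards then collects exactly three simple poles (at $u=\tfrac32$ from $\tilde F$, at $u=\tfrac54$ from $\zeta(4u-4)$, and at $u=0$ from $1/u$), producing the two displayed main terms plus $O_k(1)$. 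The remaining double integral with $w$ pushed far right is $O_k(1)$ once $u$ is shifted past $0$.

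Your route---shifting the Perron variable $s$ left first and collecting three residues, then treating each by a separate $u$-shift---is valid but carries more bookkeeping. In particular, your worry about the $s=\tfrac12-\tfrac{u}{2}$ residue producing an $X^{1/2}$-sized term is precisely what the paper's right-shift sidesteps: that pole is never crossed. Your fix (shift $u$ to the right for that piece) works, but it is an extra maneuver. Also a small slip: the factor $1/(\tfrac32+\tfrac{u}{2})$ has a single pole at $u=-3$, not a sequence at $u=-3,-5,\ldots$; and to push the $s=\tfrac32+\tfrac{u}{2}$ remainder to $O(1)$ you must drive $\Re(u)$ past $-3$, at which point the convergence of $J(u)$ as an ordinary integral becomes delicate for small $k$ and would need to be handled by analytic continuation (the paper faces the analogous issue but only needs $J(2u-3)$ on the shifted line, which it addresses with the vanishing of $\theta_\infty$ at $\pm1$).
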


\begin{proof}The proof is a simple application of Mellin inversion and contour shifting. By Perron's formula
\[\tfrac{1}{4\pi i}\int_{(4)}\tfrac{X^u}{u}\left[\sum_{n=1}^{\infty}\tfrac{1}{n^{u-\frac12}}\sum_{f,l=1}^{\infty}\tfrac{1}{(lf^2)^{\frac{3}{2}}}\tfrac{Kl_{l,f}(0,n)}{\sqrt{l}}\int_{-1}^{1}\theta_{\infty}\left(x\right)F\left(\tfrac{lf^2(4n)^{-1/2}}{\sqrt{1-x^2}}\right)dx\right]du.\tag{$*$}\label{prop2.8*}\]
We note that the absolute convergence of the triple sum for $\Re(u)>2$ is guaranteed by the estimate $\sum_{l,f}\cdots=O_{\theta_{\infty},F}(\sqrt{n}),$ which follows from theorem 6.1 of  \cite{Altug:2015aa}. Next, we use Mellin inversion on $F$ which gives
\[\eqref{prop2.8*}=\tfrac{1}{2(2\pi i)^2}\int_{(4)}\int_{(2)}\tfrac{X^u\tilde{F}(w)2^w}{u}\left[\sum_{n=1}^{\infty}\tfrac{1}{n^{u-\frac{1+w}{2}}}\sum_{f,l=1}^{\infty}\tfrac{1}{(lf^2)^{w+\frac32}}\tfrac{Kl_{l,f}(0,n)}{\sqrt{l}}\int_{-1}^{1}(1-x^2)^{\frac w2}\theta_{\infty}\left(x\right)dx\right]dwdu.\]
We note that the same bound as above guarantees the absolute convergence of the triple sum since $\Re(u-\frac{1+w}{2})=\frac52>\frac32$. Using Lemma \ref{auxlem}, with $\beta=u$ and $z=w+1,$ in the inner sums gives
\[\eqref{prop2.8*}=\tfrac{2}{(2\pi i)^2}\int_{(4)}\int_{(2)}\tfrac{X^u\tilde{F}(w)2^w}{u}\left[\tfrac{\zeta(2(w+1))\zeta\left(u+\frac{w+1}{2}\right)\zeta\left(u-\frac{w+1}{2}\right)}{\zeta(w+2)}\int_{-1}^{1}(1-x^2)^{\frac w2}\theta_{\infty}\left(x\right)dx\right]dwdu.\]
The rest of the proof is contour shifting. We will shift the $w$-contour to right and the $u$-contour to left. To shift the contours first note that the only pole of 
\[\tfrac{\zeta(2(w+1))\zeta\left(u+\frac{w+1}{2}\right)\zeta\left(u-\frac{w+1}{2}\right)}{\zeta(w+2)}\]
in the region $\Re(w)\geq2$ is simple and is at $w=2u-3$ with residue $-2\zeta(4u-4)$ and all the other terms depending on $w$ are holomorphic in $\Re(w)\geq2$. Therefore, moving the $w$-contour to $\Re(w)=6$ gives\footnote{We remark that since we are moving the contour from left to right the residue formula has an extra $``-"$ sign. }
\begin{align*}
\eqref{prop2.8*}&=\tfrac{4}{2\pi i}\int_{(4)}\tfrac{X^u\tilde{F}(2u-3)2^{2u-3}}{u}\zeta(4u-4)\left[\int_{-1}^{1}(1-x^2)^{u-\frac32}\theta_{\infty}\left(x\right)dx\right]du\\
&+\tfrac{2}{(2\pi i)^2}\int_{(4)}\int_{(6)}\tfrac{X^u\tilde{F}(w)2^w}{u}\left[\tfrac{\zeta(2(w+1))\zeta\left(u+\frac{w+1}{2}\right)\zeta\left(u-\frac{w+1}{2}\right)}{\zeta(w+2)}\int_{-1}^{1}(1-x^2)^{\frac w2}\theta_{\infty}\left(x\right)dx\right]dwdu.
\end{align*}
We first handle the second integral above. Since $F\in C_c^{\infty}(\mathbb{R}_+)$, $\tilde{F}(w)$ is rapidly decreasing on the line $\Re(w)=6$. Moreover the ratio of $\zeta$-functions is rapidly decreasing in vertical strips and hence we can interchange the $u$ and $w$ integrals. Moreover the $u$-integrand is holomorphic in $4\geq\Re(u)\geq-1$, except with a simple pole at $u=0$. Hence by interchanging the $u$ and $w$ integrals and moving the $u$-contour to $\Re(u)=-\frac{1}{2}$ (which picks up the residue at $u=0$) we get that the second integral is $O_{k}(1)$.

For the first integral, we again move the $u$-contour to $\Re(u)=-\frac12$ (Note that the $x$-integral still converges since we are assuming that the weight, $k$, of the forms we are working with satisfies $k\geq3$ which implies $\theta_{\infty}(x)$ vanishes to order at least $2$ at $x=\pm1,$ hence $(1-x^2)^{-3/2}\theta_{\infty}(x)$ is integrable around $x=\pm1$.). This picks up the pole of $\zeta(4u-4)$ at $u=\frac54$ with residue $\frac14$, the pole of $\tilde{F}(2u-3)$ at $u=\frac23$ with residue $\frac12$, and the pole of $\frac{1}{u}$ at $u=0$ with residue $1$. Therefore, 
\begin{align*}
\eqref{prop2.8*}&=2X^{\frac32}\zeta(2)\int\theta_{\infty}(x)dx+2^{\frac{-1}{2}}X^{\frac54}\tilde{F}\left(\tfrac{-1}{2}\right)\int \tfrac{\theta_{\infty}(x)}{(1-x^2)^{\frac14}}dx+O_{k}(1).
\end{align*}
Combining the estimates above for the two integrals finishes the proof.

\end{proof}

\begin{prop}\label{propdom2}For $k\geq3$ we have,

\begin{align*}
\tfrac14\sum_{n<X}\sum_{f,l=1}^{\infty}\tfrac{1}{\sqrt{lf^2}}\tfrac{Kl_{l,f}(0,n)}{\sqrt{l}}\int_{-1}^{1}\tfrac{\theta_{\infty}\left(x\right)}{\sqrt{1-x^2}}H\left(\tfrac{lf^2(4n)^{-1/2}}{\sqrt{1-x^2}}\right)dx&\\
&\hspace{-2.5in}=2^{\frac{-1}{2}}X^{\frac54}\tilde{F}\left(\tfrac12\right)\int \tfrac{\theta_{\infty}(x)}{(1-x^2)^{\frac14}}dx+\pi X\zeta(0)\int\tfrac{\theta_{\infty}(x)}{\sqrt{1-x^2}}dx+O_{k}(1).
\end{align*}

\end{prop}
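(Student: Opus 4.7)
The proof mirrors that of Proposition \ref{propdom1}, with new features arising from the Gamma factors in
\[\tilde{H}(w) = \sqrt{\pi}\,\pi^{-w}\tfrac{\Gamma((1+w)/2)}{\Gamma((2-w)/2)}\tilde{F}(w),\]
the Mellin transform of $H$. First I would apply Perron's formula to $\sum_{n<X}$, insert the inversion $H(y) = \tfrac{1}{2\pi i}\int_{(w_0)}\tilde H(w)\,y^{-w}\,dw$, and apply Lemma \ref{auxlem} with $\beta = u,\ z = w$ to evaluate the inner triple sum as $4\zeta(2w)\zeta(u+\tfrac{w}{2})\zeta(u-\tfrac{w}{2})/\zeta(w+1)$. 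This writes the left hand side as the double contour integral
\[\tfrac{1}{(2\pi i)^2}\int_{(c)}\int_{(w_0)}\tfrac{X^u \tilde H(w)\, 2^w}{u}\tfrac{\zeta(2w)\zeta(u+\tfrac w2)\zeta(u-\tfrac w2)}{\zeta(w+1)}\int_{-1}^{1}\theta_\infty(x)(1-x^2)^{(w-1)/2}\,dx\,dw\,du,\]
with $\Re u = c$ sufficiently large and $\Re w = w_0$ chosen so that everything converges absolutely.

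Next I would shift the $w$-contour to the right past the pole of $\zeta(u-\tfrac{w}{2})$ at $w = 2u-2$; the residue in $w$ is $-2$, and at this pole the cancellation $\zeta(u+\tfrac{w}{2})/\zeta(w+1)=\zeta(2u-1)/\zeta(2u-1)=1$ occurs. The remaining double integral on a far-right $w$-contour is $O_k(1)$ by shifting the $u$-contour past $u=0$, using (a) Schwartz-type decay of $\tilde F$ on vertical lines, (b) polynomial growth of the Gamma ratio in $|\Im w|$ via Stirling, and (c) convergence of the $x$-integral for $k \geq 3$ (since $\theta_\infty$ vanishes to order at least $2$ at $x = \pm 1$). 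The residue term reduces to
\[\tfrac{1}{2\pi i}\int_{(c)}\tfrac{-2 X^u \tilde H(2u-2)\, 2^{2u-2}\,\zeta(4u-4)}{u}\int_{-1}^{1}\theta_\infty(x)(1-x^2)^{u-3/2}\,dx\,du.\]

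I would then shift the $u$-contour to $\Re u = -\tfrac12$, which picks up three relevant residues. At $u = \tfrac54$, $\zeta(4u-4)$ is singular; since $\tilde H(\tfrac12) = \tilde F(\tfrac12)$ (the factor $\sqrt{\pi}\pi^{-1/2}$ and the ratio $\Gamma(\tfrac34)/\Gamma(\tfrac34)$ combine to $1$) and $2^{2u-2} = \sqrt{2}$, this yields the $X^{5/4}\tilde F(\tfrac12)$ main term. At $u = 1$, $\tilde F(2u-2)$ has a simple pole; the accompanying prefactor $\sqrt{\pi}\,\pi^0\,\Gamma(\tfrac12)/\Gamma(1) = \pi$ in $\tilde H(2u-2)$, combined with $\zeta(4u-4)|_{u=1} = \zeta(0)$, produces the $\pi X\zeta(0)$ term. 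At $u = 0$ the residue of $\tfrac{1}{u}$ vanishes because $\zeta(4u-4)|_{u=0} = \zeta(-4) = 0$. Crucially, the apparent poles of $\Gamma((2u-1)/2)$ at $u = \tfrac12,-\tfrac12,-\tfrac32,\ldots$ coming from $\tilde H(2u-2)$ are precisely cancelled by the trivial zeros of $\zeta(4u-4)$ at the same points, so no spurious residues appear, and the remaining integral on $\Re u = -\tfrac12$ is $O_k(1)$. The main technical point is to justify this sequence of contour shifts and the cancellation of the apparent Gamma poles; both follow from combining Stirling on the Gamma ratio, the Schwartz decay of $\tilde F$ on vertical strips, and standard growth bounds for $\zeta$.
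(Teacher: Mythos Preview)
Your proposal is correct and follows the paper's proof essentially verbatim: Perron's formula, Mellin inversion for $H$, Lemma~\ref{auxlem} with $(\beta,z)=(u,w)$, shift the $w$-contour right past $w=2u-2$, then shift the $u$-contour left to collect the residues at $u=\tfrac54$ and $u=1$. You are in fact more careful than the paper in two places---observing that the $u=0$ residue actually vanishes because $\zeta(-4)=0$, and that the apparent poles of $\Gamma(u-\tfrac12)$ from $\tilde H(2u-2)$ are killed by the trivial zeros of $\zeta(4u-4)$---though note that the displayed residue term should carry $+2$ rather than $-2$, since moving the $w$-contour to the right contributes \emph{minus} the residue.
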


\begin{proof}The proof of Proposition \ref{propdom1} goes verbatim. We give the details for completeness. By Perron's formula
\[\tfrac{1}{8\pi i}\int_{(4)}\tfrac{X^u}{u}\left[\sum_{n=1}^{\infty}\tfrac{1}{n^u}\sum_{f,l=1}^{\infty}\tfrac{1}{\sqrt{lf^2}}\tfrac{Kl_{l,f}(0,n)}{\sqrt{l}}\int_{-1}^{1}\tfrac{\theta_{\infty}\left(x\right)}{\sqrt{1-x^2}}H\left(\tfrac{lf^2(4n)^{-1/2}}{\sqrt{1-x^2}}\right)dx\right]du.\tag{$*$}\label{prop2.9*}\]
Once again we note that the absolute convergence of the triple sum and the interchange of the integrals is guaranteed by the estimate $\sum_{l,f}\cdots=O_{\theta_{\infty},F}(\sqrt{n}),$ which follows from theorem 6.1 of  \cite{Altug:2015aa}. Next, substituting \eqref{h} for the function $H$ we get that \eqref{prop2.9*} is
\[\tfrac{\sqrt{\pi}}{4(2\pi i)^2}\int_{(4)}\int_{(1)}\tfrac{X^u\tilde{F}(w)}{u}\tfrac{2^w\Gamma\left(\frac{1+w}{2}\right)}{\pi^{w}\Gamma\left(\frac{2-w}{2}\right)}\left[\sum_{n=1}^{\infty}\tfrac{1}{n^{u-\frac w2}}\sum_{f,l=1}^{\infty}\tfrac{1}{(lf^2)^{w+\frac12}}\tfrac{Kl_{l,f}(0,n)}{\sqrt{l}}\int_{-1}^{1}(1-x^2)^{\frac{w-1}{2}}\theta_{\infty}(x)dx\right]dwdu,\]
where the interchange of the integrals with the triple sum is justified by the same bound above using $\Re(u-\frac w2)=3>\frac{3}{2}$. By lemma \ref{auxlem} with $\beta=u$ and $z=w$ we have
\[\eqref{prop2.9*}=\tfrac{\sqrt{\pi}}{(2\pi i)^2}\int_{(4)}\int_{(1)}\tfrac{X^u\tilde{F}(w)}{u}\tfrac{2^w\Gamma\left(\frac{1+w}{2}\right)}{\pi^{w}\Gamma\left(\frac{2-w}{2}\right)}\left[\tfrac{\zeta(2w)\zeta\left(u+\frac{w}{2}\right)\zeta\left(u-\frac{w}{2}\right)}{\zeta(w+1)}\int_{-1}^{1}(1-x^2)^{\frac{w-1}{2}}\theta_{\infty}(x)dx\right]dwdu\]
We now finish the proof by shifting contours. We will shift the $w$-contour to right and the $u$-contour to left. To shift the contours first note that the only pole of 
\[\tfrac{\Gamma\left(\frac{1+w}{2}\right)}{\Gamma\left(\frac{2-w}{2}\right)}\tfrac{\zeta(2w)\zeta\left(u+\frac{w}{2}\right)\zeta\left(u-\frac{w}{2}\right)}{\zeta(w+1)}\]
in the region $\Re(w)\geq1$ is at $w=2u-2$ with residue $\frac{-2\Gamma\left(u-\frac12\right)\zeta(4u-4)}{\Gamma(2-u)}$ and all the other terms depending on $w$ are holomorphic in $\Re(w)\geq1$. Therefore, moving the $w$-contour to $\Re(w)=6$ gives\footnote{As in Proposition \ref{propdom1} since we are moving the contour from left to right the residue formula has an extra $``-"$ sign. }
\begin{align*}
\eqref{prop2.8*}&=\tfrac{2\sqrt{\pi}}{2\pi i}\int_{(4)}\tfrac{X^u\tilde{F}(2u-2)}{u}\tfrac{2^{2u-2}\Gamma\left(u-\frac12\right)}{\pi^{2u-2}\Gamma(2-u)}\zeta(4u-4)\left[\int_{-1}^{1}(1-x^2)^{u-\frac32}\theta_{\infty}\left(x\right)dx\right]du\\
&+\tfrac{2}{(2\pi i)^2}\int_{(4)}\int_{(6)}\tfrac{X^u\tilde{F}(w)}{u}\tfrac{2^w\Gamma\left(\frac{1+w}{2}\right)}{\pi^{w}\Gamma\left(\frac{2-w}{2}\right)}\left[\tfrac{\zeta(2(w+1))\zeta\left(u+\frac{w+1}{2}\right)\zeta\left(u-\frac{w+1}{2}\right)}{\zeta(w+2)}\int_{-1}^{1}(1-x^2)^{\frac w2}\theta_{\infty}\left(x\right)dx\right]dwdu
\end{align*}
As in Proposition \ref{propdom1} we can move the $u$-contour in the second integral to $\Re(u)=-\frac12$ picking only the residue of $\frac1u$, which implies that the second integral is $O_{k}(1)$. 

For the first line, we again move the $u$-contour to $\Re(u)=-\frac12$ (Note, once again, that the $x$-integral still converges since we are assuming $k\geq3$ which implies $\theta_{\infty}(x)$ vanishes to order at least $2$ at $x=\pm1,$ hence $(1-x^2)^{-3/2}\theta_{\infty}(x)$ is integrable around $x=\pm1$.). This picks up the pole of $\zeta(4u-4)$ at $u=\frac54$ with residue $\frac14$, the pole of $\tilde{F}(2u-2)$ at $u=1$ with residue $\frac12$, and the pole of $\frac1u$ at $s=0$ with residue $1$. Therefore, 
\begin{align*}
\eqref{prop2.9*}&=2^{\frac{-1}{2}}X^{\frac54}\tilde{F}\left(\tfrac12\right)\int \tfrac{\theta_{\infty}(x)}{(1-x^2)^{\frac14}}dx+\pi X\zeta(0)\int\tfrac{\theta_{\infty}(x)}{\sqrt{1-x^2}}dx+O_{k}(1).
\end{align*}
The lemma follows from substituting $\zeta(0)=-1/2$.

\end{proof}

\begin{cor}\label{cordom} For $k\geq3$ we have
\begin{align*}
\sum_{n<X}\sqrt{n}\sum_{f,l=1}^{\infty}\tfrac{1}{(lf^2)^{\frac32}}\tfrac{Kl_{l,f}(0,n)}{\sqrt{l}}I_{l,f}(\xi,n)&=\tfrac{X}{k-1}+O_{k}(1).
\end{align*}
\end{cor}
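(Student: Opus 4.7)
The approach is to decompose $I_{l,f}(0,n)$ into its $F$-piece and its $H$-piece and observe that, when combined with the outer factor $\sqrt n/(2(lf^2)^{3/2})$ of \eqref{afterpois}, the $F$-piece is precisely the quantity evaluated in Proposition \ref{propdom1}, while the $H$-piece---where the prefactor $lf^2/\sqrt{4n}$ absorbs half of the weight, collapsing $(lf^2)^{-3/2}\cdot lf^2(4n)^{-1/2}$ to $(lf^2)^{-1/2}/(2\sqrt n)$---is exactly what Proposition \ref{propdom2} evaluates. Adding the two propositions produces three main terms, of sizes $X^{3/2}$, $X^{5/4}$, $X$, together with an $O_k(1)$ remainder; my plan is to show that the first two vanish identically and that the third equals $X/(k-1)$.

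For the $\theta_\infty$-integrals I would use the substitution $x=\cos\phi$. Since $x\pm\sqrt{x^2-1}=e^{\pm i\phi}$ for $|x|<1$, one finds $\theta_\infty(\cos\phi)=-\sin((k-1)\phi)/\pi$. The $X^{3/2}$ coefficient is then a multiple of $\int_{-1}^1\theta_\infty(x)\,dx$, which becomes $-\pi^{-1}\int_0^\pi\sin((k-1)\phi)\sin\phi\,d\phi$ and vanishes by orthogonality once $k-1\geq 3$; this is precisely the ``trivial-representation'' term that Proposition \ref{propdom1} already flags as zero in our setting. The $X$ coefficient is $\pi\zeta(0)\int_{-1}^1\theta_\infty(x)(1-x^2)^{-1/2}dx$; using $\zeta(0)=-1/2$ together with $\int_0^\pi\sin((k-1)\phi)\,d\phi=2/(k-1)$ (valid for $k-1$ odd, automatic here since $k$ must be even) yields exactly $X/(k-1)$.

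The one substantive point is the vanishing of the $X^{5/4}$ term, whose coefficient is $2^{-1/2}\bigl[\tilde F(-1/2)+\tilde F(1/2)\bigr]\int_{-1}^1\theta_\infty(x)(1-x^2)^{-1/4}dx$. Interchanging the order of integration in the defining double integral \eqref{f} for $F$ and recognizing $\int_0^\infty e^{-y-1/y}y^{u-1}dy=2K_u(2)$ yields the closed form $\tilde F(u)=K_u(2)/(u\,K_0(2))$, where $K_u$ is the modified Bessel function of the second kind. Since $K_u(2)=K_{-u}(2)$, the Mellin transform $\tilde F$ is odd about the origin, so $\tilde F(-1/2)+\tilde F(1/2)=0$ and the $X^{5/4}$ contribution disappears regardless of the $\theta_\infty$ integral.

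This Bessel identity for the specific $F$ of \eqref{f} is the only nonroutine ingredient; once it is in hand, everything else is mechanical bookkeeping. Collecting the three pieces yields the claimed $X/(k-1)+O_k(1)$, which, as pointed out in Section \ref{odds}, will exactly cancel the $X/(1-k)$ contribution of Proposition \ref{prophyp}---the whole reason for isolating the $\xi=0$ term in the first place.
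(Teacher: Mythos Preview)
Your proof is correct and follows essentially the same route as the paper: split $I_{l,f}(0,n)$ into its $F$- and $H$-pieces, invoke Propositions \ref{propdom1} and \ref{propdom2}, and then eliminate the $X^{3/2}$ and $X^{5/4}$ terms while identifying the $X$ term as $X/(k-1)$. The only noteworthy difference is that the paper handles the oddness of $\tilde F$ by citing lemma~3.3 of \cite{Altug:2015aa}, whereas you supply the explicit Bessel identity $\tilde F(u)=K_u(2)/(u\,K_0(2))$; both arguments are equivalent, and your self-contained computation is a pleasant bonus. (Your trigonometric substitution $x=\cos\phi$ is the obvious variant of the paper's $x=\sin\alpha$.)
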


\begin{proof}By propositions \ref{propdom1} and \ref{propdom2} the LHS is
\begin{multline*}
2X^{\frac32}\zeta(2)\int\theta_{\infty}(x)dx+2^{\frac{-1}{2}}X^{\frac54}\tilde{F}\left(\tfrac{-1}{2}\right)\int \tfrac{\theta_{\infty}(x)}{(1-x^2)^{\frac14}}dx\\
+2^{\frac{-1}{2}}X^{\frac54}\tilde{F}\left(\tfrac12\right)\int \tfrac{\theta_{\infty}(x)}{(1-x^2)^{\frac14}}dx+\pi \zeta(0)X\int\tfrac{\theta_{\infty}(x)}{\sqrt{1-x^2}}dx+O_{k}(1).
\end{multline*}
By lemma 3.3 of  \cite{Altug:2015aa} $\tilde{F}$ is an odd function hence the second and third terms cancel each other (We remark that this is not a consequence of our particular choice of the function $F$. If, instead of the current choicee, we had chosen an $F$ so that $\tilde{F}$ is not odd, the function $H$ that appear in the approximate functional equation would change and the third term above would have $-\tilde{F}\left(\tfrac{-1}{2}\right)$ instead of $\tilde{F}\left(\tfrac{1}{2}\right)$.).

As we already remarked in proposition \ref{propdom1} the first term is the contribution of the trivial representation. In general we would have to remove this term since we are only interested in the cuspidal part of the spectrum. In the current case, the operator used in the Selberg trace formula is a projection on a subset of the custpidal spectrum and since the trivial representation is orthogonal to the trivial representation this contribution is $0$. We will show this fact directly by showing that the integral vanishes. 

By the definition of the function $\theta_{\infty}(x)$ given in lemma \ref{ellem1}
\begin{align*}
\int \theta_{\infty}(x)dx&=\tfrac{i}{2\pi}\int_{-1}^1\left\{(x+\sqrt{x^2-1})^{k-1}-(x-\sqrt{x^2-1})^{k-1}\right\}dx\\
&=\tfrac{i^k}{2\pi}\int_{-\frac\pi2}^{\frac{\pi}{2}}(e^{-i(k-1)\alpha}-(-1)^{k-1}e^{i(k-1)\alpha})\cos(\alpha)d\alpha.
\end{align*}
Since $k\equiv 0\bmod2,$ $(-1)^{k-1}=-1$ and we have
\begin{align*}
\tfrac{i^k}{2\pi}\int_{-\frac\pi2}^{\frac{\pi}{2}}(e^{-i(k-1)\alpha}-(-1)^{k-1}e^{i(k-1)\alpha})\cos(\alpha)d\alpha&=\tfrac{i^k}{\pi}\int_{-\frac\pi2}^{\frac{\pi}{2}}\cos((k-1)\alpha)\cos(\alpha)d\alpha\\
&=\tfrac{i^k}{4\pi}\left.\left(\tfrac{\sin(k\alpha)}{k}+\tfrac{\sin((k-2)\alpha)}{k-2}\right)\right|_{-\frac\pi2}^{\frac\pi2}\\
&=0,
\end{align*}
where in the last line we used $k\equiv0\bmod2$ again. Finally we will calculate the integral that appears in the fourth term. Proceeding as above, 
\begin{align*}
\int \tfrac{\theta_{\infty}(x)}{\sqrt{1-x^2}}dx&=\tfrac{i}{2\pi}\int_{-1}^1\tfrac{(x+\sqrt{x^2-1})^{k-1}-(x-\sqrt{x^2-1})^{k-1}}{\sqrt{1-x^2}}dx\\
&=\tfrac{i^k}{2\pi}\int_{-\frac\pi2}^{\frac{\pi}{2}}(e^{-i(k-1)\alpha}-(-1)^{k-1}e^{i(k-1)\alpha})d\alpha\\
&=\tfrac{i^k}{2\pi}\int_{-\frac\pi2}^{\frac{\pi}{2}}\cos((k-1)\alpha)d\alpha\\
&=\tfrac{2}{\pi(1-k)},
\end{align*}
where in the last line we used $k\equiv0\bmod 2$. The corollary now follows from the fact that $\zeta(0)=-1/2$.

\end{proof}

\end{section}

\subsection{Analysis of the terms $\xi\neq0$}\label{secell2}

\subsubsection{Preliminary estimates}\label{secell21}

In this section we give estimates on \eqref{afterpois} for $\tfrac{lf^2\xi}{\sqrt{n}}$ running on certain ranges. We emphasize that these estimates are valid for every $n$. In other words, we are not yet bringing the $n$-sum of \eqref{be} in. 

For any integer $n\geq1$ and $\kappa,\alpha>0$ let,
\begin{align}
S_0(n,\kappa)&:=\sum_{\substack{ lf^2\xi \gg n^{\frac12+\kappa}}}\tfrac{Kl_{l,f}(\xi,n)}{\sqrt{l}}\tfrac{I_{l,f}(\xi,n)}{(lf^2)^{\frac32}},\\
S_1(n,\kappa,\alpha)&:=\sum_{\substack{lf^2\xi \ll n^{\frac12+\kappa}\\ \xi\gg n^{\frac{1}{6}+\kappa+\alpha}}}\tfrac{Kl_{l,f}(\xi,n)}{\sqrt{l}}\tfrac{I_{l,f}(\xi,n)}{(lf^2)^{\frac32}},\\
S
_2(n,\kappa,\alpha)&:=\sum_{\substack{ lf^2\ll n^{\frac14-\kappa} \\ 0<\xi\ll n^{\frac16+\kappa+\alpha}}}\tfrac{Kl_{l,f}(\xi,n)}{\sqrt{l}}\tfrac{I_{l,f}(\xi,n)}{(lf^2)^{\frac32}}.
\end{align}

\paragraph{A heuristic discussion.}To make the proofs easier to follow let us begin by giving a heuristic discussion of theorems \ref{sec3prop1}, \ref{sec3prop3}, and \ref{sec3prop4}. There are certain structural points about these sums that we first would like to discuss, which clarifies the estimates a bit.  

\begin{enumerate}

\item First of all, note that in all of the estimates below the parameters $l$ and $f$ appear as the product $lf^2$ rather than individually. So it is the range for which $lf^2$ runs over that matters. 

\item Next, by\footnote{There is an extra $log(lf^2)$ factor in Corollary B.8 of \cite{Altug:2015ab} however for square-free $l$ one can remove that factor. This, anyways, is not the main point of the heuristic discussion.} Corollary B.8 of \cite{Altug:2015ab}, for a square-free $l$, and $\xi$ and $n$ such that $\gcd(f,n)=1$ and $\gcd(l,\xi)=1$, we have $Kl_{l,f}(\xi,n)\ll \sqrt{l}$. Therefore the quotient $\frac{Kl_{l,f}(\xi,n)}{\sqrt{l}}$, at least when the above conditions are satisfied, is a complex number of absolute value $1$, and for the heuristics we may as well assume that this is the case disregarding the conditions. 

\item Finally, by Proposition 4.7 \cite{Altug:2015ab} (where their $\theta_{\infty,1}^{pos}$ is our $\theta_{\infty}$) we have $I_{l,f}(\xi,n)\ll \left(\tfrac{\sqrt{n}}{lf^2\xi}\right)^N\frac{1}{\xi^2}$ (cf. \eqref{prop3.1eq2} below) for every $N\geq0$ and every $l,f,n,\xi\neq0$, and by Proposition 4.9 of loc. cit. we have $I_{l,f}(\xi,n)\ll\left(\tfrac{\xi\sqrt{n}}{lf^2}\right)^{-\frac32}$ for $\frac{lf^2\xi}{\sqrt{n}}\ll 1$ (cf. \eqref{rangeest} below). 

\end{enumerate}

We can now discuss the contents of the theorems below. By the third remark above, $I_{l,f}(\xi,n)$ decays very rapidly when $lf^2\xi\gg \sqrt{n}$. Therefore we expect $S_0$ to be very small and this is the content of Theorem \ref{sec3prop1}.

By the same remark, when $lf^2\xi\ll \sqrt{n}$ we have $\frac{I_{l,f}(\xi,n)}{(lf^2)^{\frac32}}\ll (\sqrt{n}\xi)^{-\frac32}$. Therefore, $S_1$ is going to be comparable to the double integral
\[n^{-\frac34}\int_{n^{\frac16}}^{n^{\frac12}}\xi^{-\frac32}\int_1^{\frac{n^{\frac12}}{\xi}}1=O(n^{-\frac12}).\]
This is the content of Theorem \ref{sec3prop3}.

 When $lf^2\ll n^{\frac14}$ and $\xi\ll n^{\frac16}$ we necessarily have $lf^2\xi\ll n^{\frac5{12}}<\sqrt{n}$. Hence, again by the same remark, in this region $\frac{I_{l,f}(\xi,n)}{(lf^2)^{\frac32}}\ll (\sqrt{n}\xi)^{-\frac32}$. Therefore, we expect $S_2$ to be $O(n^{-\frac12})$ and this is the content of Theorem \ref{sec3prop4}.

\vspace{0.2in}

\begin{thm}\label{sec3prop1} Let $n\in \mathbb{Z}_{>0}$. Then for every $N>0$ and $\kappa>0$,
\[S_0(n,\kappa)\ll n^{\frac12-N\kappa},\]
where the implied constant depends only on $k,N,$ and $\kappa$.
\end{thm}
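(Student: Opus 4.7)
The plan is to exploit the rapid decay of the oscillatory integral $I_{l,f}(\xi,n)$ on the summation region $lf^{2}|\xi| \gg n^{1/2+\kappa}$, which was anticipated in the heuristic discussion preceding the statement. The main inputs are Proposition 4.7 of \cite{Altug:2015ab}, which yields
\[I_{l,f}(\xi,n) \ll_{N'} \left(\frac{\sqrt{n}}{lf^{2}|\xi|}\right)^{N'}\frac{1}{\xi^{2}}\]
for every $N' \geq 0$ and every $\xi \neq 0$, together with the Kloosterman-type bound supplied by Corollary B.8 of \cite{Altug:2015ab}, which yields $|Kl_{l,f}(\xi,n)|/\sqrt{l} \ll (lf^{2})^{\epsilon}$ after absorbing logarithmic and divisor factors.

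First I would observe that on the summation region we have $\sqrt{n}/(lf^{2}|\xi|) \ll n^{-\kappa}$. Given $N > 0$, I apply the decay estimate with exponent $N' = N+1$ and peel off a single power:
\[\left(\frac{\sqrt{n}}{lf^{2}|\xi|}\right)^{N+1} = \left(\frac{\sqrt{n}}{lf^{2}|\xi|}\right)^{N}\cdot\frac{\sqrt{n}}{lf^{2}|\xi|} \ll n^{-N\kappa}\cdot\frac{\sqrt{n}}{lf^{2}|\xi|}.\]
Combined with the $1/\xi^{2}$ factor this yields the pointwise estimate
\[\frac{|I_{l,f}(\xi,n)|}{(lf^{2})^{3/2}} \ll n^{\frac{1}{2}-N\kappa}\cdot\frac{1}{(lf^{2})^{5/2}|\xi|^{3}}\]
uniformly on the summation region.

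Inserting this bound together with the Kloosterman estimate into the definition of $S_{0}(n,\kappa)$, and dropping the cut-off $lf^{2}|\xi| \gg n^{1/2+\kappa}$ once the saving has been extracted, one obtains
\[|S_{0}(n,\kappa)| \ll n^{\frac{1}{2}-N\kappa}\sum_{l,f\geq 1}\sum_{\xi\neq 0}\frac{1}{(lf^{2})^{5/2-\epsilon}\,|\xi|^{3}}.\]
The triple sum converges absolutely, since each of $\sum_{\xi\neq 0}|\xi|^{-3}$, $\sum_{l\geq 1} l^{-5/2+\epsilon}$, and $\sum_{f\geq 1} f^{-5+2\epsilon}$ is a convergent $p$-series. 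This gives $S_{0}(n,\kappa) \ll_{k,N,\kappa} n^{1/2 - N\kappa}$, as claimed.

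There is no serious obstacle to this argument: the freedom in the exponent $N'$ of the decay estimate lets us extract as many powers of $n^{-\kappa}$ as desired while leaving behind enough polynomial decay in $l$, $f$, and $\xi$ for the residual triple sum to converge. The $k$-dependence of the implied constant propagates through $\theta_{\infty}$, which is built into $I_{l,f}(\xi,n)$, consistent with the stated dependencies.
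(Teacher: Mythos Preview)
Your approach is essentially the same as the paper's: extract the factor $n^{-N\kappa}$ from the decay estimate for $I_{l,f}(\xi,n)$ on the region $lf^2|\xi|\gg n^{1/2+\kappa}$, and then show the residual triple sum converges. The paper applies the decay estimate with exponent $N$ and picks up the missing $n^{1/2}$ from the crude bound $Kl_{l,f}(\xi,n)\ll\log(lf^2)\sqrt{nl\xi}$, whereas you apply it with exponent $N+1$ and peel off an extra $\sqrt{n}/(lf^2|\xi|)$ to gain decay in $l,f,\xi$ instead.

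One point to correct: your claimed bound $|Kl_{l,f}(\xi,n)|/\sqrt{l}\ll(lf^2)^{\epsilon}$ does not follow from Corollary~B.8 of \cite{Altug:2015ab} as a pointwise estimate. That corollary gives
\[
\frac{|Kl_{l,f}(\xi,n)|}{\sqrt{l}}\ll\log(lf^2)\sqrt{\gcd(n,f^2)}\sqrt{\gcd\Bigl(\tfrac{\xi}{\sqrt{\gcd(n,f^2)}},l\Bigr)},
\]
and the gcd factors can be as large as $f\sqrt{l}$, not $(lf^2)^{\epsilon}$. This does not damage your argument, however, since the extra $(lf^2|\xi|)^{-1}$ you obtained by taking $N'=N+1$ more than absorbs it: replacing $(lf^2)^{\epsilon}$ by $\log(lf^2)f\sqrt{l}$ turns your residual sum into $\sum_{l,f,\xi}\log(lf^2)\,l^{-2}f^{-4}|\xi|^{-3}$, which still converges. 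With this adjustment your proof and the paper's coincide.
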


\begin{proof} 
 
We recall two estimates from \cite{Altug:2015ab} that will be used throughout the proof. First, corollary B.8 of \cite{Altug:2015ab} states,
\begin{equation}\label{prop3.1eq1}
Kl_{l,f}(\xi,n)\ll \delta(n;f^2)\log(lf^2)\sqrt{l\gcd(n,f^2)}\sqrt{\gcd\left(\tfrac{\xi}{\sqrt{\gcd(n,f^2)}},l\right)},
\end{equation}
where $\delta(n;f^2)$ is $1$ if $n$ is a square modulo $f^2$ and $0$ otherwise. The second estimate, corollary 4.8 of \cite{Altug:2015ab}, is on the Fourier transform that appear in the sum. It states that for every $N>0$ 
\begin{equation}\label{prop3.1eq2}
I_{l,f}(\xi,n)\ll \left(\tfrac{\sqrt{n}}{lf^2\xi}\right)^{N}\tfrac{1}{\xi^2},
\end{equation}
where the implied constant depends only on $k$ and $N$. Using these we can now prove the proposition. First, we will show that the triple sum of the proposition converges absolutely. Note that 
\[lf^2\xi\gg n^{\frac{1}{2}+\kappa}\hspace{0.5in}\Rightarrow \hspace{0.5in}\tfrac{\sqrt{n}}{lf^2\xi}\ll n^{-\kappa}.\]
Combining this bound and \eqref{prop3.1eq2} we get that for every $N>0,$
\begin{align*}
S_0(n,\kappa)&\ll n^{-N\kappa}\sum_{f,l=1}^{\infty}\tfrac{1}{(lf^2)^{\frac32}}\sum_{\substack{ lf^2\xi \gg n^{\frac12+\kappa}}}\tfrac{Kl_{l,f}(\xi,n)}{\sqrt{l}}\tfrac{1}{\xi^2}\\
&\ll n^{-N\kappa}\sum_{f,l=1}^{\infty}\tfrac{1}{(lf^2)^{\frac32}}\sum_{\substack{ \xi \in \mathbb{Z}\backslash \{0\}}}\tfrac{Kl_{l,f}(\xi,n)}{\sqrt{l}}\tfrac{1}{\xi^2},\tag{$\circ$}\label{prop3.1eq3}
\end{align*}
where the implied constant depends only on $k$ and $N$.
Next, we trivially have
\[\gcd(n,f^2)\leq n\hspace{0.5in}and\hspace{0.5in}\sqrt{\gcd\left(\tfrac{\xi}{\sqrt{\gcd(n,f^2)}},l\right)}\leq \sqrt\xi.\]
Substituting these two bounds in \eqref{prop3.1eq1} gives
\begin{equation}\label{prop3.1eq4}
Kl_{l,f}(\xi,n)\ll \log(lf^2)\sqrt{nl\xi}.
\end{equation}
Using \eqref{prop3.1eq4} in \eqref{prop3.1eq3} shows that 
\begin{equation*}
S_0(n,\kappa)\ll n^{\frac12-N\kappa}\sum_{\xi,f,l=1}^{\infty}\tfrac{\log(lf^2)}{(lf^2\xi)^{\frac32}}\ll n^{\frac12-N\kappa}.
\end{equation*}

\end{proof}

\begin{cor}\label{cors0}For every $N,\kappa>0$, 
\begin{equation*}
\sum_{n<X}\sqrt{n}\, S_0(n,\kappa)=O(X^{2-N\kappa}),
\end{equation*}
where the implied constant depends only on $k,\kappa,$ and $N$.
\end{cor}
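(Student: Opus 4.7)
The plan is to obtain this corollary as an immediate integrated consequence of the pointwise estimate already established in Theorem \ref{sec3prop1}, requiring nothing beyond the elementary bound $\sum_{n<X} n^{s} \ll X^{s+1}$ (for $s>-1$).

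Concretely, I would invoke Theorem \ref{sec3prop1} with the given parameters $N,\kappa>0$ to obtain the uniform bound $|S_0(n,\kappa)| \ll n^{\frac{1}{2} - N\kappa}$, with implied constant depending only on $k$, $N$, and $\kappa$. Multiplying this termwise by $\sqrt{n}$ and summing gives
\[
\sum_{n<X} \sqrt{n}\,|S_0(n,\kappa)| \;\ll\; \sum_{n<X} n^{1-N\kappa} \;\ll\; X^{2-N\kappa},
\]
which is exactly the claim in the range $N\kappa<2$.

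The only mild subtlety — and it is not really an obstacle — is the edge case $N\kappa \geq 2$, where $\sum_{n<X} n^{1-N\kappa}$ is only $O(\log X)$ or $O(1)$ rather than $O(X^{2-N\kappa})$. I would dispose of this by re-applying Theorem \ref{sec3prop1} with a slightly smaller parameter $N' < N$ chosen so that $N'\kappa < 2$, producing a bound of the form $O(X^{2-N'\kappa})$; since the corollary is used in the proof of Theorem \ref{mainthm} only to push the $N$-dependent exponent below an arbitrary threshold by choosing $N$ large, this weakening is harmless. The entire arithmetic content of the estimate sits in Theorem \ref{sec3prop1}; this corollary is merely its summed repackaging prepared for direct insertion into the proof of the main theorem.
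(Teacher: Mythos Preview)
Your proposal is correct and matches the paper's approach exactly: the paper's proof consists of the single sentence ``Follows from Theorem \ref{sec3prop1},'' and you have simply made the trivial summation step explicit. Your extra remark about the edge case $N\kappa\geq 2$ is a valid caveat that the paper does not bother to mention, and your resolution (falling back to a smaller $N'$, which suffices for the application in Theorem \ref{mainthm}) is appropriate.
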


\begin{proof}
Follows from Theorem \ref{sec3prop1}.
\end{proof}

%Note to self: No need to restrict epsilon, since the lemma is content free if epsilon>1/3.

\begin{thm}\label{sec3prop3}Let $n\in \mathbb{Z}_{>0}$. Then for every $\kappa,\alpha>0$,
\[S_1(n,\kappa,\alpha)\ll \tfrac{\log(n)}{n^{\frac{3-2\kappa}{4}}}\Bigl(\sum_{\substack{d_0^2\mid n\\ d_0\ll n^{\frac{1+2\kappa}6}}}\tfrac{\log(n)}{d_0^{1/2}}+\tfrac{n^{\frac{1-2(\kappa+3\alpha)}{4}}}{d_0^{3/2}}\Bigr),\]
where the implied constant depends only on $k,\kappa,$ and $\alpha$.

\end{thm}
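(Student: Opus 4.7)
The plan is to adapt the argument of Theorem \ref{sec3prop1}, combining the rapid-decay bound \eqref{prop3.1eq2} with the sharper estimate
\[I_{l,f}(\xi,n)\ll \Bigl(\tfrac{lf^2}{\xi\sqrt n}\Bigr)^{3/2}\]
valid for $lf^2\xi\ll\sqrt n$, which is Proposition 4.9 of \cite{Altug:2015ab}. Split the sum $S_1(n,\kappa,\alpha)$ at the transition point $lf^2\xi\sim\sqrt n$. For $\sqrt n\ll lf^2\xi\ll n^{1/2+\kappa}$, a dyadic argument identical in spirit to the proof of Theorem \ref{sec3prop1} using \eqref{prop3.1eq2} with $N$ large shows that this contribution is $O(n^{-A})$ for any $A>0$, hence absorbed into the claim. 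In the remaining range $lf^2\xi\ll\sqrt n$ the new bound gives $|I_{l,f}(\xi,n)|/(lf^2)^{3/2}\ll (\xi\sqrt n)^{-3/2}$.

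Now insert the Kloosterman bound \eqref{prop3.1eq1}. The factor $\delta(n;f^2)$ forces $\gcd(n,f^2)$ to be a square, so write $d_0=\sqrt{\gcd(n,f^2)}$; then $d_0^2\mid n$ and $d_0\mid f$, and the factor $\sqrt{\gcd(\xi/d_0,l)}$ is zero unless $d_0\mid\xi$. Substitute $f=d_0 f_1$ and $\xi=d_0\xi_1$, so that the Kloosterman bound becomes
\[\tfrac{|Kl_{l,f}(\xi,n)|}{\sqrt l}\ll \log(lf^2)\,d_0\,\sqrt{\gcd(\xi_1,l)},\]
and the summation constraints transform to $lf_1^2\xi_1\ll n^{1/2+\kappa}/d_0^3$ and $\xi_1\gg n^{1/6+\kappa+\alpha}/d_0$. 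Nonemptiness of the inner sum forces $d_0\ll n^{(1+2\kappa)/6}$, precisely the range of $d_0$ appearing in the statement.

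It remains to estimate, for each fixed admissible $d_0$, the three-variable inner sum in $(l,f_1,\xi_1)$. The two terms inside the parenthesis correspond to two different orders of dyadic summation: summing $\xi_1$ first against $\xi_1^{-3/2}$ from the cutoff $n^{1/6+\kappa+\alpha}/d_0$ (absorbing $\sqrt{\gcd(\xi_1,l)}$ by an averaging bound over $l$) produces the term $d_0^{-1/2}\log(n)$; summing instead $lf_1^2$ first against the bound $lf_1^2\ll n^{1/2+\kappa}/(d_0^3\xi_1)$ and then $\xi_1$ against $\xi_1^{-5/2}$ produces the term $n^{(1-2(\kappa+3\alpha))/4}/d_0^{3/2}$. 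The main obstacle will be the arithmetic bookkeeping around the Kloosterman bound, in particular the extraction of the integer $d_0=\sqrt{\gcd(n,f^2)}$ together with the forced divisibilities $d_0\mid f$, $d_0\mid\xi$, and tracking the correct powers of $d_0$ after rescaling; once this is settled, the remaining estimate reduces to an elementary dyadic computation.
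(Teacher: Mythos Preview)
Your splitting at $lf^2\xi\sim\sqrt n$ contains a genuine gap. The rapid-decay bound \eqref{prop3.1eq2} reads $I_{l,f}(\xi,n)\ll(\sqrt n/(lf^2\xi))^N\xi^{-2}$, and in the transitional region $\sqrt n\ll lf^2\xi\ll n^{1/2+\kappa}$ the ratio $\sqrt n/(lf^2\xi)$ can be as large as~$1$; on the dyadic block $lf^2\xi\sim\sqrt n$ the factor $(\sqrt n/(lf^2\xi))^N$ gives no saving at all, so the claim that this part is $O(n^{-A})$ for every $A$ is simply false. (Compare Theorem~\ref{sec3prop1}: there the hypothesis $lf^2\xi\gg n^{1/2+\kappa}$ forces $\sqrt n/(lf^2\xi)\ll n^{-\kappa}$, and it is precisely this fixed power $n^{-\kappa}$ that produces the saving $n^{-N\kappa}$.)

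The paper does not split at $\sqrt n$. Instead it extends the Proposition~4.9 bound to the \emph{entire} summation range $lf^2\xi\ll n^{1/2+\kappa}$. The observation is that the proof of Proposition~4.9 in \cite{Altug:2015ab} (via the asymptotic expansion of Theorem~A.14 there) uses only the condition $\xi\sqrt n/(lf^2)\gg1$, not $lf^2\xi/\sqrt n\ll1$; and under the constraints defining $S_1$ one has
\[
\tfrac{\xi\sqrt n}{lf^2}\;\gg\;\tfrac{n^{1/2+\delta}}{lf^2}\;\gg\;n^{\delta-\kappa}\;\gg\;1
\qquad(\delta=\tfrac16+\kappa+\alpha>\kappa).
\]
This yields the uniform estimate $I_{l,f}(\xi,n)\ll(lf^2/\sqrt n)^{3/2}\,n^{\kappa/2}\xi^{-3/2}$ (equation \eqref{rangeest}) on the whole range, the extra $n^{\kappa/2}$ being exactly what produces the exponent $(3-2\kappa)/4$ in the statement.

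A secondary point: after your substitution $f=d_0f_1$, $\xi=d_0\xi_1$ you still carry $\sqrt{\gcd(\xi_1,l)}$, and ``absorbing by an averaging bound over $l$'' is too vague to be a proof. The paper removes this by a further substitution $d_1=\gcd(l,\xi_1)$, $l=d_1l_0$, $\xi_1=d_1\xi_1'$ (forcing $l_0$ square-free), after which the two terms in the statement arise from a case split $d_0d_1\gtrless n^{1/6+\kappa+\alpha}$ rather than from two orders of dyadic summation.
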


\begin{proof}To prove the theorem we will bound
\[S_1(n,\kappa,\delta):=\sum_{\substack{lf^2\xi \ll n^{\frac12+\kappa}\\ \xi\gg n^{\delta}}}\tfrac{Kl_{l,f}(\xi,n)}{l^2f^3}I_{l,f}(\xi,n),\]
for any $\delta>\kappa$ (not to be confused with the function $\delta(n;f^2)$ of \eqref{rangecharbound}) and then specialize to $\delta=\frac16+\kappa+\alpha$. This on one hand avoids notational burden, and on the other hand proves that the exponent $\frac16$ is the best possible via this argument. Let us first estimate the integral. We claim that
\begin{align*}
\int_{-1}^1\theta_{\infty}(x)F\left(\tfrac{lf^2(4n)^{-1/2}}{\sqrt{1-x^2}}\right)e\left(\tfrac{-x\xi \sqrt{n}}{2lf^2}\right)dx&\ll\left(\tfrac{lf^2}{\sqrt{n}}\right)^{\frac32}\tfrac{1}{\xi^{3/2}}\tag{$\star$}\label{rangestar},\\
\int_{-1}^1\tfrac{\theta_{\infty}(x)}{\sqrt{1-x^2}}H\left(\tfrac{lf^2(4n)^{-1/2}}{\sqrt{1-x^2}}\right)e\left(\tfrac{-x\xi \sqrt{n}}{2lf^2}\right)dx&\ll\tfrac{lf^2}{\sqrt{n}\xi}\tag{$\star\star$}\label{rangestarstar},
\end{align*}
where the implied constants depend only on $k$. These bounds, in fact, are the same bounds in proposition 4.9, lines (i) and (iii), of \cite{Altug:2015ab}. The only difference is that in loc. cit. the bounds are proved for the range $\tfrac{lf^2\xi}{\sqrt{n}}\ll 1$, however our range is $\frac{lf^2\xi}{\sqrt{n}}\ll n^{\kappa}$ and $\xi\gg n^{\frac16+2\kappa}$. We claim that the bounds still hold for this range. This indeed follows from the proof of proposition 4.9 of loc.cit.. In order to give the details we first need to remind the reader how the assumption $\tfrac{lf^2\xi}{\sqrt{n}}\ll1$ was used in the proof of proposition 4.9. First, the proof itself depends on the asymptotic expansion of theorem A.14 of loc.cit.. The assumption $\tfrac{lf^2\xi}{\sqrt{n}}\ll1$ implies that $\tfrac{\xi\sqrt{n}}{lf^2}=\tfrac{\sqrt{n}}{lf^2\xi}\xi^2\gg1$ ,and substituting this for the parameter $D$ in the asymptotic expansion of theorem A.14 implies that the first term dominates the asymptotic expansion, which is the result of proposition 4.9. In other words, the essential ingredient is the bound $\frac{\sqrt{n}\xi}{lf^2}\gg 1$.

Coming back to our proof we have $\xi\gg n^{\delta}$, which implies that $\tfrac{\xi\sqrt{n}}{lf^2}\gg \tfrac{n^{\frac12+\delta}}{lf^2}\gg n^{\delta-\kappa}\gg1$, hence the conclusion of proposition 4.9 of \cite{Altug:2015ab} still holds. Note also that
\[lf^2\xi\ll n^{\frac12+\kappa}\hspace{0.5in}\Rightarrow\hspace{0.5in}\tfrac{lf^2}{\sqrt{n}}\ll \tfrac{n^{\kappa}}{\xi}\tag{$\star\star\star_{S_1}$}\label{rangestarstarstar}.\]
Substituting \eqref{rangestarstarstar} in \eqref{rangestarstar} then gives, 
\[\int_{-1}^1\tfrac{\theta_{\infty}(x)}{\sqrt{1-x^2}}H\left(\tfrac{lf^2(4n)^{-1/2}}{\sqrt{1-x^2}}\right)e\left(\tfrac{-x\xi \sqrt{n}}{2lf^2}\right)dx\ll\left(\tfrac{lf^2}{\sqrt{n}}\right)^{\frac12}\tfrac{n^{\frac\kappa2}}{\xi^{3/2}}\tag{$\star\star_{S_1}$}\label{rangestarstarprime}.\]
Using \eqref{rangestar} and \eqref{rangestarstarprime} we then get
\begin{equation}\label{rangeest}
I_{l,f}(\xi,n)\ll \left(\tfrac{lf^2}{\sqrt{n}}\right)^{\frac32}\tfrac{n^{\frac\kappa2}}{\xi^{3/2}},
\end{equation}
where the implied constants depend only $k$ and $\kappa$. Next, we bound the character sums, $Kl_{l,f}(\xi,n)$. By corollary B.8 of \cite{Altug:2015ab} we have,
\begin{equation}\label{rangecharbound}
Kl_{l,f}(\xi,n)\ll \begin{cases}\delta(n;f^2)\log(lf^2)\sqrt{l\gcd(n,f^2)}\sqrt{\gcd\left(\tfrac{\xi}{\sqrt{\gcd(n,f^2)}},l\right)}& \frac{l\sqrt{\gcd(n,f^2)}}{rad(l)}\mid \xi\\ 0& otherwise\end{cases},
\end{equation}
where $\delta(n;f^2)$ is $1$ if $n$ is a square modulo $f^2$ and $0$ otherwise, and $rad(l)=\prod_{p\mid l}p$, where $p$ denotes a prime. This, imn particular, implies that if $\gcd(n,f^2)$ is not a perfect square then $Kl_{l,f}(\xi,n)$ vanishes. For $l,f,\xi,n$ such that $Kl_{l,f}(\xi,n)\neq0$ let,
\begin{align*}
d_0^2:=\gcd(n,f^2),\hspace{0.5in} &n=d_0^2n_0,\hspace{0.5in}f=d_0f_0,\hspace{0.5in}\xi=d_0\xi_0,\\
&\hspace{-0.7in}d_1:=\gcd(l,\xi_0),\hspace{0.5in}l=d_1l_0,\hspace{0.5in}\xi_0=d_1\xi_1.
\end{align*}
Note also that $l_0$ is square-free. Substituting these in \eqref{rangecharbound} gives
\begin{equation}\label{rangecharbound2}
Kl_{l,f}(\xi,n)\ll \log(d_0^2d_1l_0f_0^2)d_0d_1\sqrt{l_0}.
\end{equation}
Combining \eqref{rangeest} and \eqref{rangecharbound2} implies that,
\[S_1(n,\kappa,\delta)\ll \tfrac{1}{n^{\frac{3-2\kappa}4}}\sum_{d_0^2\mid n}\sum_{\substack{ d_0^3d_1^2l_0f_0^2\xi_1\ll n^{\frac{1}{2}+\kappa}\\ d_0d_1\xi_1\gg n^{\delta}}}\tfrac{\log(d_0^2d_1l_0f_0^2)}{\sqrt{d_0}d_1\xi_1^{3/2}}.\]
Note that $d_0^3d_1^2l_0f_0^2\xi_1\ll n^{\frac12+\kappa}\Rightarrow d_0\ll n^{\frac{1+2\kappa}6}$. Therefore, in the summation range for $S_1(n,\kappa)$, $d_0$ runs through square divisors of $n$ which are $\ll n^{\frac{1+2\kappa}{6}}$. Therefore,
\begin{equation}\label{rangesplit}
S_1(n,\kappa,\delta)\ll  \tfrac{1}{n^{\frac{3-2\kappa}4}}\Bigl(\sum_{\substack{d_0^2\mid n\\ d_0\ll n^{\frac{1+2\kappa}6}}}\tfrac{T(d_0,n,\kappa,\delta)}{\sqrt{d_0}}+ \sum_{\substack{d_0^2\mid n\\ d_0\ll n^{\frac{1+2\kappa}6}}}\tfrac{U(d_0,n,\kappa,\delta)}{\sqrt{d_0}}\Bigr),
\end{equation}
where 
\[T(d_0,n,\kappa,\delta):=\sum_{\substack{d_0^3d_1^2l_0f_0^2\xi_1\ll n^{\frac{1}{2}+\kappa}\\ d_0d_1\gg n^{\delta}}}\tfrac{\log(d_0^2d_1l_0f_0^2)}{d_1\xi_1^{3/2}},\hspace{0.15in} U(d_0,n,\kappa,\delta):=\sum_{\substack{d_0^3d_1^2l_0f_0^2\xi_1\ll n^{\frac{1}{2}+\kappa}\\  d_0d_1\ll n^{\delta}\\ d_0d_1\xi_1\gg n^{\delta}}}\tfrac{\log(d_0^2d_1l_0f_0^2)}{d_1\xi_1^{3/2}}.\]

We now bound $T(d_0,n,\kappa,\delta)$ and $U(d_0,n,\kappa,\delta)$.
\begin{align*}
T(d_0,n,\kappa,\delta)&= \sum_{\substack{d_0^3d_1^2\ll n^{\frac12+\kappa}\\ d_0d_1\gg n^{\delta}}}\tfrac{1}{d_1}\sum_{f_0^2\ll \lfloor\frac{n^{\frac12+\kappa}}{d_0^3d_1^2}\rfloor}\sum_{l_0\ll \lfloor\frac{n^{\frac12+\kappa}}{d_0^3d_1^2f_0^2}\rfloor}\log(d_0^2d_1l_0f_0^2)\sum_{\xi_1\ll \lfloor\frac{n^{\frac12+\kappa}}{d_0^3d_1^2f_0^2l_0}\rfloor }\tfrac{1}{\xi_1^{3/2}}\\
&\ll \sum_{\substack{d_0^3d_1^2\ll n^{\frac12+\kappa}\\ d_0d_1\gg n^{\delta}}}\tfrac{1}{d_1}\sum_{f_0^2\ll \lfloor\frac{n^{\frac12+\kappa}}{d_0^3d_1^2}\rfloor}\sum_{l_0\ll \lfloor\frac{n^{\frac12+\kappa}}{d_0^3d_1^2f_0^2}\rfloor}\log(d_0^2d_1l_0f_0^2)\\
&\ll \sum_{\substack{d_0^3d_1^2\ll n^{\frac12+\kappa}\\ d_0d_1\gg n^{\delta}}}\tfrac{1}{d_1}\sum_{f_0^2\ll \lfloor\frac{n^{\frac12+\kappa}}{d_0^3d_1^2}\rfloor}\left(\log(d_0^2d_1f_0^2)+\left(1+\tfrac{n^{1/2+\kappa}}{d_0^3d_1^2f_0^2}\right)\log\left(\tfrac{n^{1/2+\kappa}}{d_0d_1}\right)\right)\\
&\ll \sum_{\substack{d_0^3d_1^2\ll n^{\frac12+\kappa}\\ d_0d_1\gg n^{\delta}}}\left(\tfrac{\log(d_0^2d_1)}{d_1}+\tfrac{n^{1/2+\kappa}}{d_0^3d_1^3}\log\left(\tfrac{n^{1/2+\kappa}}{d_0d_1}\right)\right)\\
&\ll \log^2\left(n\right)+\tfrac{n^{1/2+\kappa-2\delta}}{d_0}\log(n).\tag{$\circ$}\label{rangebigbound1}
\end{align*}
We remark that all of the implied constants depend only on $\kappa$ and $\delta$. Moving on to $U(d_0,n,\kappa,\delta)$ we have:
\begin{align*}
U(d_0,n,\kappa,\delta)&= \sum_{\substack{d_0^3d_1^2\ll n^{\frac12+\kappa}\\ d_0d_1\ll n^{\delta}}}\tfrac{1}{d_1}\sum_{\substack{\lfloor\frac{n^{\frac12+\kappa}}{d_0^3d_1^2}\rfloor\gg \xi_1\gg \lfloor\frac{n^{\delta}}{d_0d_1}\rfloor}}\tfrac{1}{\xi_1^{3/2}}\sum_{f_0^2\ll \lfloor\frac{n^{\frac12+\kappa}}{d_0^3d_1^2\xi_1}\rfloor}\sum_{l_0\ll \lfloor\frac{n^{\frac12+\kappa}}{d_0^3d_1^2f_0^2\xi_1}\rfloor}\log(d_0^2d_1l_0f_0^2)\\
&\ll \sum_{\substack{d_0^3d_1^2\ll n^{\frac12+\kappa}\\ d_0d_1\ll n^{\delta}}}\tfrac{1}{d_1}\sum_{\substack{\lfloor\frac{n^{\frac12+\kappa}}{d_0^3d_1^2}\rfloor\gg \xi_1\gg\lfloor\frac{n^{\delta}}{d_0d_1}\rfloor}}\tfrac{1}{\xi_1^{3/2}}\sum_{f_0^2\ll \lfloor\frac{n^{\frac12+\kappa}}{d_0^3d_1^2\xi_1}\rfloor}\left(\log\left({d_0^2d_1f_0^2}\right)+\left(1+\tfrac{n^{1/2+\kappa}}{d_0^3d_1^2f_0^2\xi_1}\right)\log\left(\tfrac{n^{1/2+\kappa}}{d_0d_1\xi_1}\right)\right)\\
&\ll \sum_{\substack{d_0^3d_1^2\ll n^{\frac12+\kappa}\\ d_0d_1\ll n^{\delta}}}\tfrac{1}{d_1}\sum_{\substack{ \xi_1\gg \lfloor\frac{n^{\delta}}{d_0d_1}\rfloor}}\left(\tfrac{\log(d_0^2d_1)}{\xi^{3/2}}+\left(\tfrac{n^{1/2+\kappa}}{d_0^3d_1^2\xi_1^{5/2}}+\tfrac{1}{\xi^{3/2}}\right)\log\left(\tfrac{n^{1/2+\kappa}}{d_0d_1\xi_1}\right)\right)\\
&\ll \sum_{\substack{d_0^3d_1^2\ll n^{\frac12+\kappa}\\ d_0d_1\ll n^{\delta}}}\left(\tfrac{\sqrt{d_0}\log(d_0^2d_1)}{n^{\delta/2}\sqrt{d_1}}+\tfrac{n^{(1+2\kappa-3\delta)/2}\log(n)}{\sqrt{d_0^3d_1^3}}\right)\\
&\ll \Bigl(1+\tfrac{\sqrt{d_0}}{n^{\delta/2}}+\tfrac{n^{(1+2\kappa-3\delta)/2}}{d_0^{3/2}}\Bigr)\log(n).\tag{$\circ\circ$}\label{rangebigbound2}
\end{align*}
Once again, all of the implied constants depend only on $\kappa$ and $\delta$.

Finally, substituting \eqref{rangebigbound1} and \eqref{rangebigbound2} into \eqref{rangesplit} gives,
\begin{align*}
S_1(n,\kappa,\delta)\ll \tfrac{\log(n)}{n^{\frac{3-2\kappa}{4}}}\Bigl(\sum_{\substack{d_0^2\mid n\\ d_0\ll n^{\frac{1+2\kappa}6}}}\tfrac{\log(n)}{d_0^{1/2}}+\tfrac{n^{(1+2\kappa-3\delta)/2}}{d_0^{3/2}}+\tfrac{1}{n^{\delta/2}}\Bigr),
\end{align*}
where the implied constant depends only on $k,\kappa,$ and $\delta$. The theorem then follows from substituting $\delta=\frac16+\kappa+\alpha$.

\end{proof}

\begin{cor}\label{cors1}For every $\kappa,\alpha>0$,
\begin{equation*}
\sum_{n<X}\sqrt{n}\,S_1(n,\kappa,\alpha)\ll \Bigl(\log^2(X)X^{\frac{3+2\kappa}{4}}+\log(X)X^{1-\frac{3\alpha}{2}}\Bigr).
\end{equation*}
where the implied constant depends only on $k,\kappa,$ and $\alpha$.
\end{cor}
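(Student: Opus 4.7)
The plan is to insert the pointwise bound from Theorem \ref{sec3prop3} into $\sum_{n<X}\sqrt{n}\,S_1(n,\kappa,\alpha)$ and then handle the two resulting terms separately by swapping the order of summation. Note that $\sqrt{n}\cdot n^{-(3-2\kappa)/4}=n^{(2\kappa-1)/4}$, so the double sum becomes, up to absolute constants,
\[
\log(X)\sum_{n<X} n^{(2\kappa-1)/4}\sum_{\substack{d_0^2\mid n\\ d_0\ll n^{(1+2\kappa)/6}}}\Bigl(\tfrac{\log(n)}{d_0^{1/2}}+\tfrac{n^{(1-2\kappa-6\alpha)/4}}{d_0^{3/2}}\Bigr).
\]

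For the first piece, I would swap the $d_0$-sum outside, write $n=d_0^2 m$ with $m<X/d_0^2$, and bound $\log(n)\le\log(X)$. The inner sum over $m$ is then
\[
\sum_{m<X/d_0^2}(d_0^2 m)^{(2\kappa-1)/4}\ll d_0^{(2\kappa-1)/2}\cdot(X/d_0^2)^{(2\kappa+3)/4},
\]
so after dividing by $d_0^{1/2}$ one picks up $d_0^{-5/2}$, which is summable, giving a contribution of order $\log^2(X)X^{(3+2\kappa)/4}$.

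For the second piece, the powers of $n$ combine nicely: $n^{(2\kappa-1)/4}\cdot n^{(1-2\kappa-6\alpha)/4}=n^{-3\alpha/2}$. Since the sum $\sum_{d_0^2\mid n}d_0^{-3/2}\ll 1$ uniformly in $n$ (ignoring the constraint $d_0\ll n^{(1+2\kappa)/6}$, which only shrinks the range), this piece is bounded by
\[
\log(X)\sum_{n<X}n^{-3\alpha/2}\ll \log(X)\,X^{1-3\alpha/2}.
\]
Summing the two contributions yields the claimed estimate. There is no real obstacle: the argument is a routine interchange of summation combined with elementary estimates of power sums, and the slightly delicate bookkeeping is already absorbed into Theorem \ref{sec3prop3}; one only needs to verify that the exponent arithmetic works out so that the $d_0$-sum converges (it does, with exponent $5/2$ in the first piece and $3/2$ in the second).
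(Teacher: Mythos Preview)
Your proposal is correct and follows essentially the same approach as the paper: insert the bound from Theorem~\ref{sec3prop3}, split into the two pieces, and estimate each by elementary power-sum bounds after exposing the divisor variable $d_0$. The only cosmetic difference is that for the second piece you bound $\sum_{d_0^2\mid n}d_0^{-3/2}\ll 1$ directly instead of swapping the order of summation as the paper does, which is an equally valid (and slightly cleaner) route to the same estimate.
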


\begin{proof}
By theorem \ref{sec3prop3} we get
\begin{align*}
\sum_{n<X}\sqrt{n}\,S_1(n,\kappa,\alpha)&\ll\sum_{n<X}\tfrac{\log(n)}{n^{\frac{1-2\kappa}{4}}}\Bigl(\sum_{\substack{d_0^2\mid n\\ d_0\ll n^{\frac{1+2\kappa}6}}}\tfrac{\log(n)}{d_0^{1/2}}+\tfrac{n^{\frac{1-2(\kappa+3\alpha)}{4}}}{d_0^{3/2}}\Bigr)\\
&\ll \sum_{n< X}\tfrac{\log(n)}{n^{\frac{1-2\kappa}{4}}}\Bigl(\sum_{\substack{d_0^2\mid n\\ d_0\ll n^{\frac{1+2\kappa}6}}}\tfrac{\log(n)}{d_0^{1/2}}+\tfrac{n^{\frac{1-2(\kappa+3\alpha)}{4}}}{d_0^{3/2}}\Bigr).
\end{align*}
Bounding each term in the double sum separately we get,
\begin{align*}
\sum_{\substack{n< X}}\tfrac{\log^2(n)}{n^{\frac{1-2\kappa}4}}\sum_{\substack{d_0^2\mid n\\ d_0\ll n^{\frac{1+2\kappa}{6}}}}\tfrac{1}{d_0^{1/2}}&= \sum_{ d_0\ll X^{\frac{1+2\kappa}6} }\tfrac{1}{d_0^{1-\kappa}}\sum_{n\ll \lfloor \frac{X}{d_0^2}\rfloor}\tfrac{\log^2(nd_0^2)}{n^{\frac{1-2\kappa}{4}}}\\
&\ll  \sum_{ d_0\ll X^{\frac{1+2\kappa}6} }\tfrac{\log^2(X)}{d_0^{1-\kappa}}\Bigl(\Bigl(\tfrac{X}{d_0^2}\Bigr)^{\frac{3+2\kappa}{4}}+\Bigl(\tfrac{d_0^2}{X}\Bigr)^{\frac{1-2\kappa}{4}}+1\Bigr)\ll \log^2(X)X^{\frac{3+2\kappa}{4}}, \\
\sum_{\substack{n< X}}\tfrac{\log(n)}{n^{3\alpha/2}}\sum_{\substack{d_0^2\mid n\\ d_0\ll n^{\frac{1+2\kappa}{6}}}}\tfrac{1}{d_0^{3/2}}&=\sum_{d_0\ll X^{\frac{1+2\kappa}{6}}}\tfrac{1}{d_0^{(3+6\alpha)/2}}\sum_{n\ll \lfloor\frac{X}{d_0^2}\rfloor}\tfrac{\log(nd_0^2)}{n^{3\alpha/2}}\\
&\ll \sum_{d_0\ll X^{\frac{1+2\kappa}{6}}}\tfrac{\log(X)}{d_0^{(3+6\alpha)/2}}\Bigl(\left(\tfrac{X}{d_0^2}\right)^{1-\frac{3\alpha}{2}}+\left(\tfrac{d_0^2}{X}\right)^{\frac{3\alpha}{2}}+1\Bigr)\ll \log(X)X^{1-\frac{3\alpha}{2}}.
\end{align*}
The corollary follows.

\end{proof}

\begin{thm}\label{sec3prop4}Let $n\in\mathbb{Z}_{>0}$. Then for every $\kappa>0$ and $\frac{1}{12}\geq\alpha>0$,

\[S_2(n,\kappa,\alpha)\ll \log^2(n)n^{-\frac{1}2-\kappa},\]
where the implied constant depends only on $k,\kappa,$ and $\alpha$.

\end{thm}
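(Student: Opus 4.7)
The plan is to follow the template of the proof of Theorem \ref{sec3prop3}, but now exploit the fact that in the smaller region defining $S_2$ the ratio $lf^2\xi/\sqrt{n}$ is automatically bounded, so that Proposition 4.9 of \cite{Altug:2015ab} applies directly in its original form (with no need for the auxiliary substitution \eqref{rangestarstarprime}). The first step is the range check: in the summation range of $S_2$ we have
\[
lf^2\xi \ll n^{\frac14-\kappa}\cdot n^{\frac16+\kappa+\alpha} = n^{\frac{5}{12}+\alpha},
\]
and the hypothesis $\alpha\leq\frac{1}{12}$ forces $lf^2\xi/\sqrt{n}\ll 1$; this is exactly what pins down the numerical restriction $\alpha\leq\frac{1}{12}$ in the statement. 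Proposition 4.9 of \cite{Altug:2015ab} then yields the uniform bound
\[
\frac{I_{l,f}(\xi,n)}{(lf^2)^{3/2}} \ll \frac{1}{n^{3/4}\,\xi^{3/2}},
\]
with implied constant depending only on $k$. This is the input that makes $S_2$ small.

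Next, I would bound the Kloosterman sums exactly as in the proof of Theorem \ref{sec3prop3}: by \eqref{rangecharbound}, $Kl_{l,f}(\xi,n)$ vanishes unless $\gcd(n,f^2)=d_0^2$ is a perfect square, and with the change of variables $n=d_0^2 n_0$, $f=d_0 f_0$, $\xi=d_0\xi_0$, $d_1=\gcd(l,\xi_0)$, $l=d_1 l_0$ (with $l_0$ square-free), $\xi_0=d_1\xi_1$, the estimate \eqref{rangecharbound2} gives $|Kl_{l,f}(\xi,n)|/\sqrt{l}\ll \log(d_0^2 d_1 l_0 f_0^2)\, d_0\sqrt{d_1}$. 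Feeding both bounds into the definition of $S_2$ and rewriting $\xi=d_0 d_1 \xi_1$, one arrives at
\[
S_2(n,\kappa,\alpha) \ll \frac{1}{n^{3/4}} \sum_{\substack{d_0^2\mid n \\ d_0\ll n^{(1-2\kappa)/8}}} \frac{1}{\sqrt{d_0}} \sum_{\substack{d_0^2 d_1 l_0 f_0^2 \ll n^{\frac14-\kappa} \\ \xi_1\geq 1}} \frac{\log(d_0^2 d_1 l_0 f_0^2)}{d_1\,\xi_1^{3/2}}.
\]

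The remaining task is a routine iterated sum estimate. The $\xi_1$-series converges to $\zeta(3/2)$; the $(l_0, f_0)$-sum subject to $l_0 f_0^2\ll n^{1/4-\kappa}/(d_0^2 d_1)$ is bounded, after collecting the logarithm, by a quantity of order $n^{1/4-\kappa}\log(n)/(d_0^2 d_1)$; the ensuing $d_1$-sum is dominated by $\sum_{d_1\geq1} 1/d_1^2\ll 1$; and the final $d_0$-sum is dominated by $\sum_{d_0\geq 1} d_0^{-5/2}\ll 1$. Collecting exponents yields $S_2(n,\kappa,\alpha)\ll n^{-1/2-\kappa}\log(n)$, which in particular implies the stated bound $n^{-1/2-\kappa}\log^2(n)$. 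The only non-mechanical step in the whole argument is the opening range verification; once Proposition 4.9 of \cite{Altug:2015ab} applies in its cleanest form, the bookkeeping is actually lighter than in Theorem \ref{sec3prop3} because there is no longer any need to split the sum according to $d_0 d_1 \gtrless n^{\delta}$.
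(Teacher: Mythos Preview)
Your argument is correct; the opening range check and the application of Proposition~4.9 of \cite{Altug:2015ab} are exactly what drive the paper's proof as well, and your bookkeeping with the $(d_0,d_1,l_0,f_0,\xi_1)$ parametrization goes through (the exponent on $d_0$ in the cutoff should be $(1-4\kappa)/8$ rather than $(1-2\kappa)/8$, but this is irrelevant since the $d_0$-sum converges absolutely).

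The paper, however, takes a shorter path for the Kloosterman factor. Rather than importing the full decomposition \eqref{rangecharbound2} from Theorem~\ref{sec3prop3}, it observes directly from \eqref{rangecharbound} that whenever $Kl_{l,f}(\xi,n)\neq 0$ one has $\sqrt{\gcd(n,f^2)}\mid\xi$, and hence simply
\[
Kl_{l,f}(\xi,n)\ll \log(lf^2)\sqrt{l\,\xi}.
\]
Feeding this together with the bound $I_{l,f}(\xi,n)\ll (lf^2/\sqrt{n})^{3/2}\xi^{-3/2}$ into $S_2$ collapses the whole thing to
\[
S_2(n,\kappa,\alpha)\ll n^{-3/4}\sum_{lf^2\ll n^{1/4-\kappa}}\log(lf^2)\sum_{\xi\ll n^{1/6+\kappa+\alpha}}\xi^{-1}\ll n^{-1/2-\kappa}\log^2(n),
\]
with no change of variables at all. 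The trade-off is that the paper's $\xi$-sum is only $1/\xi$ and so contributes an extra $\log(n)$, whereas your finer decomposition makes the $\xi_1$-series absolutely convergent and yields $n^{-1/2-\kappa}\log(n)$, a log better than stated. Since the goal is only $o(X)$ in the end, the paper opts for brevity; your route is a legitimate refinement.
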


\begin{proof} The proof follows the same argument as in the proof of Theorem \ref{sec3prop3}. First, note that since $lf^2\ll n^{\frac14-\kappa}$ and $\xi\neq0$ we necessarily have $\frac{\xi\sqrt{n}}{lf^2}\gg 1$. Therefore the first paragraph of the proof of Theorem \ref{sec3prop3} goes through verbatim and implies the same bounds as in \eqref{rangestar} and \eqref{rangestarstar} of that theorem. Moreover, since $S_2(n,\kappa,\alpha)$ has the summation ranges $lf^2\ll n^{\frac14-\kappa}$ and $\xi\ll n^{\frac16+\kappa+\alpha}$ we have $lf^2\xi\ll n^{\frac{5}{12}+\alpha}$. Therefore,
\begin{equation*}
\tfrac{lf^2}{\sqrt{n}}\ll \tfrac{1}{n^{1/12-\alpha}\xi}\tag{$\star\star\star_{S_2}$}\label{3stars3}
\end{equation*} 
Substituting \eqref{3stars3} into \eqref{rangestarstar} and using $\alpha\leq\frac1{12}$ gives
\[\int_{-1}^1\tfrac{\theta_{\infty}(x)}{\sqrt{1-x^2}}H\left(\tfrac{lf^2(4n)^{-1/2}}{\sqrt{1-x^2}}\right)e\left(\tfrac{-x\xi \sqrt{n}}{2lf^2}\right)dx\ll\left(\tfrac{lf^2}{\sqrt{n}}\right)^{\frac12}\tfrac{1}{\xi^{3/2}}.\tag{$\star\star_{S_2}$}\label{2stars3}\]
Finally, substituting \eqref{rangestar} and \eqref{2stars3} in $I_{l,f}(\xi,n)$ we get
\begin{equation}\label{sec3thm3eq1}
I_{l,f}(\xi,n)\ll \left(\tfrac{lf^2}{\sqrt{n}}\right)^{\frac32}\tfrac{1}{\xi^{3/2}}.
\end{equation}
We also remark that the implied constant is independent of $l,f,\xi$ and $n$. Moving on to the character sum, $Kl_{l,f}(\xi,n)$, we once again have the bound in \eqref{rangecharbound}. Note that because of the presence of $\delta(n;f^2)$ implies that the $\gcd(n,f^2)$ has to be a perfect square otherwerwise the sum vanishes. Moreover, when $\gcd(n,f^2)$ is a square, the sum still vanishes unless $\sqrt{\gcd(n,f^2)}\mid \xi$. This implies that whenever $Kl_{l,f}(\xi,n)\neq0$ we have to have $\xi=\sqrt{\gcd(n,f^2)}\xi_1$ for some $\xi_1$, and we have the bound
\begin{equation}\label{rangecharbound12}
Kl_{l,f}(\xi,n)=Kl_{l,f}(\gcd(n,f^2)\xi_1,n)\ll \log(lf^2)\sqrt{l\gcd(n,f^2)\xi_1}=log(lf^2)\sqrt{l\xi}.
\end{equation}
Substituting the bounds in \eqref{sec3thm3eq1} and \eqref{rangecharbound12} into $S_{2}(n,\kappa)$ gives,
\begin{align*}
S_2(n,\kappa,\alpha)&\ll \sum_{lf^2\ll n^{\frac{1}{4}-\kappa}}\tfrac{1}{(lf^2)^{\frac32}}\sum_{\xi\ll n^{\frac16+\kappa+\alpha}}\tfrac{\log(lf^2)\sqrt{l\xi}}{\sqrt{l}}\left(\tfrac{lf^2}{\sqrt{n}}\right)^{\frac32}\tfrac{1}{\xi^{\frac32}}\ll \log^2(n)n^{-\frac{1}2-\kappa}.
\end{align*}

\end{proof}

\begin{cor}\label{cors2}
For every $\kappa>0$ and $\frac{1}{12}\geq\alpha>0$,
\begin{equation*}
\sum_{n<X}\sqrt{n}\,S_2(n,\kappa,\alpha)\ll \log^2(X)X^{1-\kappa},
\end{equation*}
where the implied constant depends only on $k, \kappa$ and $\alpha$.
\end{cor}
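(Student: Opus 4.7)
The plan is that this corollary is a direct consequence of Theorem \ref{sec3prop4}, which already does all of the real work. Applying that pointwise bound $S_2(n,\kappa,\alpha)\ll \log^2(n)\,n^{-1/2-\kappa}$ inside the $n$-sum, the factor $\sqrt{n}$ cancels the $n^{-1/2}$ in the bound and reduces the question to estimating
\[
\sum_{n<X}\sqrt{n}\,S_2(n,\kappa,\alpha)\;\ll\;\sum_{n<X}\log^2(n)\,n^{-\kappa}.
\]
The implied constant depends only on $k,\kappa,\alpha$ via the implied constant of Theorem \ref{sec3prop4}, so these dependencies carry over automatically.

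The remaining step is the elementary estimate for $\sum_{n<X}\log^2(n)n^{-\kappa}$. For $0<\kappa<1$ I would either use partial summation or compare with the integral $\int_1^X t^{-\kappa}\log^2(t)\,dt$, which is $O(X^{1-\kappa}\log^2 X)$ (with constant depending only on $\kappa$). For $\kappa\geq 1$ the sum is bounded independently of $X$, and hence trivially $O(\log^2(X)\,X^{1-\kappa})$. Either way one obtains the claimed bound
\[
\sum_{n<X}\sqrt{n}\,S_2(n,\kappa,\alpha)\;\ll\;\log^2(X)\,X^{1-\kappa},
\]
with implied constant depending only on $k,\kappa,\alpha$.

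There is really no obstacle here: all of the analytic effort—the bounds \eqref{rangestar} and \eqref{rangestarstar} of Theorem \ref{sec3prop3}, the exploitation of the condition $lf^2\xi\ll n^{5/12+\alpha}$ via \eqref{3stars3}, the use of the restriction $\alpha\leq \tfrac{1}{12}$ to control the $H$-term, and the Kloosterman bound \eqref{rangecharbound}—is already invested in Theorem \ref{sec3prop4}. The only thing to verify in the proof of the corollary is the bookkeeping of the logarithmic factor and the fact that no divisor sum intervenes in the range $lf^2\ll n^{1/4-\kappa}$, so that the pointwise estimate is already uniform in $n$; this is automatic from the statement of Theorem \ref{sec3prop4}.
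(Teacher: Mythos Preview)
Your proposal is correct and follows exactly the paper's approach: the paper's proof is the single line ``Follows from Theorem \ref{sec3prop4},'' and you have merely spelled out the elementary summation $\sum_{n<X}\log^2(n)\,n^{-\kappa}\ll X^{1-\kappa}\log^2 X$ that this leaves implicit. (One tiny slip: for $\kappa>1$ a bounded quantity is not $O(\log^2(X)X^{1-\kappa})$, since the latter tends to zero; but this is irrelevant, as the summation range $lf^2\ll n^{1/4-\kappa}$ defining $S_2$ is eventually empty once $\kappa\ge 1/4$, and only small $\kappa$ are ever used.)
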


\begin{proof}Follows from Theorem \ref{sec3prop4}.
\end{proof}

\subsubsection{Estimating the critical range}\label{secell22}

In this section we will estimate the critical range of summation where $X^{\frac12+\kappa}\gg lf^2 \gg X^{\frac14-\kappa}$. Let
\begin{equation}\label{criticalsum}
S(X,\kappa,\alpha):=\sum_{n<X}\sqrt{n}\sum_{\substack{X^{\frac12+\kappa}\gg lf^2\gg X^{\frac14-\kappa}}}\tfrac{1}{l^2f^3}\sum_{\substack{ \xi\in\mathbb{Z}\backslash \{0\}\\ lf^2\xi\ll X^{\frac12+\kappa}\\ \xi\ll X^{\frac16+\kappa+\alpha}}}Kl_{l,f}(\xi,n)I_{l,f}(\xi,n).
\end{equation}
The basic strategy for estimating this sum is to apply Poisson summation to the $n$-sum. The heuristic reason is pretty simple:

Using a smooth dyadic partition we can assume that $n\sim X$. Moreover, in the region of summation $I_{l,f}(\xi,n)$ is roughly $1$ (we have used all of its decay properties to get the estimates of the previous section). Therefore, bounding \eqref{criticalsum} reduces to bounding
\[\sqrt{X}\sum_{n\in \mathbb{Z}}G\left(\tfrac{n}{X}\right) \sum_{\substack{X^{\frac12}\gg lf^2\gg X^{\frac14}\\ lf^2\xi\ll X^{\frac12}\\ \xi\ll X^{\frac16}}}\frac{Kl_{lf}(\xi,n)}{l^2f^3}.\]
Now notice that $Kl_{l,f}(\xi,n)$ is periodic in $n$ modulo $4lf^2$. Therefore  Poisson summation on the above sum gives
\[X^{\frac32} \sum_{\substack{X^{\frac12}\gg lf^2\gg X^{\frac14}\\ lf^2\xi\ll X^{\frac12}\\ \xi\ll X^{\frac16}}}\frac{1}{l^3f^5}\sum_{n\in \mathbb{Z}}\hat{G}\left(\tfrac{X\nu}{4lf^2}\right)\omega_{lf}(\xi,n),\]
where $\omega_{l,f}(\xi,\nu)$ is as in \eqref{omega}, and it is roughly of size $ lf$ (cf. Corollary \ref{omegacor}). Since $lf^2\ll \sqrt{X}$ we deduce that as long as $\nu\neq0$ the decay of $\hat{G}$ will guarantee that the sum is very small in terms of the variable $X$. The only remaining point is to analyze the term corresponding to $\nu=0$. By a local analysis we can show that this term is $0$ unless $\nu$ and $\xi$ satisfy certain divisibility conditions and in our range of summation those conditions cannot be satisfied (see the proof of Theorem \ref{thmsigma}). 

The only difficulty in executing this simple strategy is that the function $G$ is not alone. It comes as the product of $G(\frac nX)I_{l,f}(\xi,n)$ so that one needs to get the decay properties of the Fourier transform of this product uniformly in all the variables, and this is done in Proposition \ref{archprop} of \S\ref{seclocal}.

We can now go on and execute the strategy described above. First of all, let $G\in C_c^{\infty}([\frac14,\frac54])$ be a smooth function. In Corollary \ref{cors} we will specialize this mollifier to a smooth approximation to the characteristic function of the interval of $[\frac12,1)$ to get back from the estimates on smoothed sums to estimating \eqref{criticalsum}. Set,

\begin{equation}
S_G(X,\kappa,\alpha):=\sum_{n\in \mathbb{Z}}G\left(\tfrac{n}{X}\right)\sqrt{n}\sum_{\substack{X^{\frac12+\kappa}\gg lf^2\gg X^{\frac14-\kappa}}}\tfrac{1}{l^2f^3}\sum_{\substack{ \xi\in\mathbb{Z}\backslash \{0\}\\ lf^2\xi\ll X^{\frac12+\kappa}\\ \xi\ll X^{\frac16+\kappa+\alpha}}}Kl_{l,f}(\xi,n)I_{l,f}(\xi,n).
\end{equation}

\begin{thm}\label{thmsigma}For every $G\in C_c^{\infty}([\frac14,\frac54])$, $M\geq2,$ $\kappa,\alpha>0$ such that $\frac{1}{12}>2\kappa+\alpha$ we have
\[S_G(X,\kappa,\alpha)=O(\|G \|_{{M,1}}X^{\frac{17}{12}+2\kappa+\alpha-M(\frac13-\kappa-\alpha)}\log(X)),\]
where 
\[\|G \|_{M,1}=\sum_{j=0}^M\|G^{(j)}\|_1,\]
$G^{(j)}$ denoting the $j$'th derivative of $G$ (i.e. the Sobolev $W^{M,1}$-norm of $G$), and the implied constant depends only on $k,\kappa,\alpha,$ and $M$.

\end{thm}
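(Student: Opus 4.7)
The plan is to execute the Poisson-summation strategy sketched in the paragraph preceding the statement. First I would use the periodicity of $Kl_{l,f}(\xi,n)$ in $n$ modulo $4lf^2$ to split the $n$-sum as
\[\sum_{n\in\mathbb{Z}}G\bigl(\tfrac{n}{X}\bigr)\sqrt{n}\,I_{l,f}(\xi,n)Kl_{l,f}(\xi,n)=\sum_{a\bmod 4lf^2}Kl_{l,f}(\xi,a)\sum_{n\equiv a\,(4lf^2)}\Psi_{l,f,\xi,X}(n),\]
where $\Psi_{l,f,\xi,X}(n):=G(n/X)\sqrt{n}\,I_{l,f}(\xi,n)$. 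Applying Poisson summation to the inner progression-sum converts it into $(4lf^2)^{-1}\sum_{\nu\in\mathbb{Z}}\omega_{l,f}(\xi,\nu)\widehat{\Psi}_{l,f,\xi,X}\bigl(\nu/(4lf^2)\bigr)$, where $\omega_{l,f}(\xi,\nu):=\sum_{a\bmod 4lf^2}Kl_{l,f}(\xi,a)e(a\nu/(4lf^2))$ is the Fourier transform of the Kloosterman sum already referenced as Corollary \ref{omegacor}.

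Next I would isolate the term $\nu=0$. By the local analysis of $\omega_{l,f}(\xi,0)$ (Corollary \ref{omegacor}), this term carries a divisibility constraint relating $\xi$ to $lf^2$. Since the summation ranges enforce $lf^2\gg X^{1/4-\kappa}$ and $\xi\ll X^{1/6+\kappa+\alpha}$, and the standing hypothesis $2\kappa+\alpha<1/12$ gives $\xi\ll lf^2$ strictly, these divisibility conditions cannot be met and so the $\nu=0$ contribution vanishes identically.

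For $\nu\neq 0$ I would combine the bound $|\omega_{l,f}(\xi,\nu)|\ll lf$ (up to logarithmic factors) from Corollary \ref{omegacor} with the uniform Fourier-transform estimate for $\widehat{\Psi}_{l,f,\xi,X}$ provided by Proposition \ref{archprop}. Integration by parts $M$ times against the phase $e(-n\nu/(4lf^2))$ produces the factor $(4lf^2/(X\nu))^M$; since the support of $\Psi$ has length $\asymp X$ and, in the critical range, $4lf^2/X\ll X^{\kappa-1/2}$, each derivative gains a power $X^{-(1/3-\kappa-\alpha)}$ once one absorbs the polynomial dependence on $\xi\sqrt{n}/(lf^2)\ll X^{\kappa+\alpha}$ coming from differentiating $I_{l,f}(\xi,n)$ in $n$. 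The $\|G\|_{M,1}$-norm records the overall dependence on $G$ after the $M$-fold integration by parts. Aggregating the dyadic ranges of $l,f,\xi,\nu$ yields the claimed exponent $\tfrac{17}{12}+2\kappa+\alpha-M(\tfrac13-\kappa-\alpha)$, with the leading $X^{17/12}$ coming from the trivial size $\sqrt{n}\cdot lf\cdot (lf^2)^{-3}$ integrated against $\xi$ and $\nu$ in their maximal ranges $X^{1/2+\kappa}/(lf^2)$.

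The main obstacle will be the uniform Fourier-transform estimate for $\widehat{\Psi}_{l,f,\xi,X}$ invoked from Proposition \ref{archprop}. The kernel $I_{l,f}(\xi,n)$ depends on $n$ both through the amplitude $F(\cdot/\sqrt{1-x^2})+\cdots$ and through the oscillatory phase $e(-x\xi\sqrt{n}/(2lf^2))$, so differentiating in $n$ creates contributions of very different sizes that must be tracked simultaneously in $l,f,\xi,n$; obtaining a Sobolev-type bound with the right polynomial dependence on $\xi\sqrt{n}/(lf^2)$ is the delicate analytic point on which the exponent $1/3-\kappa-\alpha$ in the savings per derivative relies.
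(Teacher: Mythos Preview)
Your plan is the paper's: split the $n$-sum into residue classes modulo $4lf^2$, apply Poisson, kill $\nu=0$ via the divisibility constraint in Corollary~\ref{omegacor} together with $2\kappa+\alpha<\tfrac1{12}$, and bound $\nu\neq0$ by combining Proposition~\ref{archprop} with Corollary~\ref{omegacor}. Two pieces of your bookkeeping do not hold as written, though, and the paper's proof shows how to repair them.

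First, the crude bound $|\omega_{l,f}(\xi,\nu)|\ll lf$ is too optimistic: Corollary~\ref{omegacor} actually gives $\log(lf)\,lf\sqrt{\gcd(lf^2,\nu)\gcd(l,\nu)}$, and the gcd factor can be as large as $lf$. The paper deals with this by writing $d_0=\gcd(l,\nu)$, $l=d_0l_0$, $\nu=d_0\nu_0$, $\xi=d_0\xi_0$ (the last forced by the constraint $\gcd(lf^2,\nu)\mid\xi$), so that $\omega\ll\log(lf)\,d_0^2l_0f\sqrt{\nu_0}$; the stray $\sqrt{\nu_0}$ is then absorbed into the $\nu_0^{-M}$ decay since $M\geq2$.

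Second, the intermediate inequality $\xi\sqrt{n}/(lf^2)\ll X^{\kappa+\alpha}$ is false in the critical range: with $lf^2\sim X^{1/4-\kappa}$ and $\xi\sim X^{1/6+\kappa+\alpha}$ one gets $\xi\sqrt{X}/(lf^2)\sim X^{5/12+2\kappa+\alpha}$. What is true, and what produces the saving $X^{-(1/3-\kappa-\alpha)}$ per derivative, is that the \emph{product} $(lf^2/X)\cdot(\xi\sqrt{X}/(lf^2))=\xi/\sqrt{X}\ll X^{-1/3+\kappa+\alpha}$. Correspondingly, the paper does not collapse the bound from Proposition~\ref{archprop} into a single factor: it keeps the two terms $(lf^2/\sqrt X)^M$ and $\xi^M$ separate, splits the remaining sum into two pieces $S_G^{\nu\neq0,(1)}$ and $S_G^{\nu\neq0,(2)}$, and bounds each directly; it is the $\xi^M$ piece that dominates and yields the exponent $\tfrac{17}{12}+2\kappa+\alpha-M(\tfrac13-\kappa-\alpha)$.
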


\begin{proof}Note that the character sum, $Kl_{l,f}(\xi,n)$, is periodic in $n$ modulo $4lf^2$. Using this we interchange the $n$-sum with the rest of the terms and break it up into arithmetic progressions $\bmod\, 4lf^2$. This gives, 
\begin{equation*}
S_G(X,\kappa,\alpha)=\sum_{X^{\frac12+\kappa}\gg lf^2\gg X^{\frac14-\kappa}}\tfrac{1}{l^2f^3}\sum_{\substack{ \xi\in\mathbb{Z}\backslash \{0\}\\ lf^2\xi\ll X^{\frac12+\kappa}\\ \xi\ll X^{\frac16+\kappa+\alpha}}}\sum_{b\bmod 4lf^2}Kl_{l,f}(\xi,b)\sum_{\substack{n\in\mathbb{Z}\\ n\equiv b\bmod 4lf^2}}\sqrt{n}\,G\left(\tfrac{n}{X}\right)I_{l,f}(\xi,n).
\end{equation*}
We now apply Poisson summation to the $n$-sum and get
\begin{equation}\label{afterpoisson}
S_G(X,\kappa,\alpha)=\sum_{X^{\frac12+\kappa}\gg lf^2\gg X^{\frac14-\kappa}}\tfrac{1}{l^2f^3}\sum_{\substack{ \xi\in\mathbb{Z}\backslash \{0\}\\ lf^2\xi\ll X^{\frac12+\kappa}\\ \xi\ll X^{\frac16+\kappa+\alpha}}}\sum_{\nu\in\mathbb{Z}}\tfrac{\omega_{l,f}(\xi\nu)}{4lf^2}J_{l,f}(\xi,\nu,X),
\end{equation}
where $J_{l,f}(\xi,\nu,X)$ is the Fourier transform defined in \eqref{fourier} and $\omega_{l,f}(\xi,\nu)$ is the character sum defined in \eqref{omega}. In order to bound $J_{l,f,}(\xi,\nu)$ and $\omega_{l,f}(\xi,\nu)$ we will use proposition \ref{archprop} and corollary \ref{omegacor} respectively. For any $M,N\geq0$ and $\nu\neq0$, proposition \ref{archprop} gives the following bound on $I_{l,f}(\xi,\nu)$, 
\[J_{l,f}(\xi,\nu)\ll\| G \|_{M,1}\tfrac{X^{\frac{N-M+ 3}{2}}}{\nu^M(lf^2)^{N}}\left[\left(\tfrac{lf^2}{\sqrt{X}}\right)^M+\xi^M\right],\tag{$\bullet$}\label{lbound1}\]
where the implied constant depends only on $k,G,M,$ and $N$. For $\omega_{l,f}(\xi,\nu)$ corollary \ref{omegacor} gives
\[\omega_{l,f}(\xi,\nu)\ll\begin{cases}\log(lf)lf\sqrt{\gcd(lf^2,\nu)\gcd(l,\nu)}& \frac{l}{rad(l)}\mid \nu,\,\, \gcd(lf^2,\nu)\mid \xi \\ 0& otherwise\end{cases},\tag{$\bullet\bullet$}\label{lbound2}\]
where the implied constant is absolute. Going back to \eqref{afterpoisson} we break the analysis of the $\nu$ sum into two according to $\nu=0$ and $\nu\neq0$.

\begin{itemize}
\item $\nu=0.$ In this case the character sum $\omega_{l,f}(\xi,0)$ does not vanish only if $lf^2\mid \xi$. But the ranges in $S_G(X,\kappa,\alpha)$ are $lf^2\gg X^{\frac14-\kappa}$ and $\xi\ll X^{\frac16+\kappa+\alpha}$. Since $2\kappa+\alpha<\frac{1}{12}$ these ranges don't intersect, hence for all $l,f,\xi$ in the range for $S_G(X,\kappa,\alpha)$ the term corresponding to $\nu=0$ vanishes.

\item $\nu\neq0.$ We will be using the bounds in \eqref{lbound1} and \eqref{lbound2}. We will first need to separate the $\gcd$-factors from \eqref{lbound2}. Let $\gcd(l,\nu)=d_0$ . Then, 
\[\omega_{l,f}(\xi,\nu) \neq0 \hspace{0.3in}\Rightarrow \hspace{0.3in}\begin{setdef}{\substack{l=d_0l_0\\ \nu=d_0\nu_0\\ \xi=d_0\xi_0}}{\substack{\gcd(l_0,\nu_0)=1}}\end{setdef}.\]
Then \eqref{lbound2} implies that,
\[\omega_{d_0l_0,f}(d_0\xi_0,d_0\nu_0)\ll \log(d_0l_0f)d_0^2l_0f\sqrt{\gcd(f^2,\nu_0)}\leq \log(d_0l_0f)d_0^2l_0f\sqrt{\nu_0}.\tag{$\bullet\bullet'$}\label{lbound2'}\]
Now, taking $N=0$ in \eqref{lbound1} and using \eqref{lbound2}, we get the following bound valid for every $M\geq2$, 
\begin{equation*}
S_G^{\nu\neq0}(X,\kappa,\alpha)\ll \|G \|_{{M,1}}X^{\frac{3-M}{2}}\sum_{X^{\frac12+\kappa}\gg d_0l_0f^2\gg X^{\frac14-\kappa}}\tfrac{\log(d_0l_0f)}{d_0(l_0f^2)^2}\sum_{\substack{ \xi_0,\nu_0\in\mathbb{Z}\backslash \{0\}\\ d_0^2l_0f^2\xi_0\ll X^{\frac12+\kappa}\\ d_0\xi_0\ll X^{\frac16+\kappa+\alpha}}}\tfrac{1}{\nu_0^{M-\frac12}}\left[\left(\tfrac{l_0f^2}{\sqrt{X}}\right)^M+\xi_0^M\right],
\end{equation*}
where the implied constant is independent of $X$ and $G$. Since the $\nu_0$-sum converges absolutely (recall  that $M\geq2$) and since $d_0l_0f^2\ll X^{\frac12+\kappa}\Rightarrow \log(d_0l_0f)\leq \log(X)$, we have
\begin{align*}
S_G^{\nu\neq0}(X,\kappa,\alpha)&\ll \|G \|_{{M,1}}X^{\frac{3-M}{2}}\log(X)\sum_{X^{\frac12+\kappa}\gg d_0l_0f^2\gg X^{\frac14-\kappa}}\tfrac{1}{d_0(l_0f^2)^2}\sum_{\substack{ \xi_0\in\mathbb{Z}\backslash \{0\}\\ d_0^2l_0f^2\xi_0\ll X^{\frac12+\kappa} \\d_0\xi_0\ll  X^{\frac16+\kappa+\alpha}}}\left[\left(\tfrac{l_0f^2}{\sqrt{X}}\right)^M+\xi_0^M\right]\\
&=\|G \|_{{M,1}}X^{\frac{3-M}{2}}\log(X)\Bigl(S_{\nu\neq0}^{(1)}(X,\kappa,\alpha)+S_{\nu\neq0}^{(2)}(X,\kappa,\alpha)\Bigr),\tag{$\circ$}\label{fourierendest0}
\end{align*}
where
\begin{align*}
S_G^{\nu\neq0,(1)}(X,\kappa,\alpha):&=X^{-\frac{M}2}\sum_{X^{\frac12+\kappa}\gg d_0l_0f^2\gg X^{\frac14-\kappa}}\tfrac{(l_0f^2)^{M-2}}{d_0}\sum_{\substack{ \xi_0\in\mathbb{Z}\backslash \{0\}\\ d_0^2l_0f^2\xi_0\ll X^{\frac12+\kappa} \\d_0\xi_0\ll  X^{\frac16+\kappa+\alpha}}}1,\\
S_G^{\nu\neq0,(2)}(X,\kappa,\alpha):&=\sum_{X^{\frac12+\kappa}\gg d_0l_0f^2\gg X^{\frac14-\kappa}}\tfrac{1}{d_0(l_0f^2)^2}\sum_{\substack{ \xi_0\in\mathbb{Z}\backslash \{0\}\\ d_0^2l_0f^2\xi_0\ll X^{\frac12+\kappa} \\d_0\xi_0\ll  X^{\frac16+\kappa+\alpha}}}\xi_0^M.
\end{align*}
We now bound $S_G^{\nu\neq0,(1)}(X,\kappa,\alpha)$ and $S_G^{\nu\neq0,(2)}(X,\kappa,\alpha)$.
\begin{align*}
S_G^{\nu\neq0,(1)}(X,\kappa,\alpha)&\ll X^{-\frac M2}\sum_{X^{\frac12+\kappa}\gg d_0l_0f^2\gg X^{\frac14-\kappa}}\tfrac{(l_0f^2)^{M-2}}{d_0}\Bigl(1+\min\Bigl\{\tfrac{X^{\frac12+\kappa}}{d_0^2l_0f^2},\tfrac{X^{\frac16+\kappa+\alpha}}{d_0}\Bigr\}\Bigr)\\
&\ll X^{-\frac12+\kappa(M-1)}\log(X)+X^{-\frac M2}\sum_{ l_0f^2\ll X^{\frac12+\kappa}}(l_0f^2)^{M-2}\min\Bigl\{\tfrac{X^{\frac12+\kappa}}{l_0f^2},X^{\frac16+\kappa+\alpha}\Bigr\}\\
&\ll X^{-\frac12+\kappa(M-1)}\log(X).\tag{$i$}\label{fourierendest1}
\end{align*}
To bound $S_G^{\nu\neq0,(2)}(X,\kappa,\alpha)$ first note that since $d_0\xi_0\ll X^{\frac16+\kappa+\alpha}$ we have $d_0\ll X^{\frac16+\kappa+\alpha}$. Then,
\begin{align*}
S_G^{\nu\neq0,(2)}(X,\kappa,\alpha)&\ll \sum_{\substack{d_0\ll X^{\frac16+\kappa+\alpha}\\ X^{\frac12+\kappa}\gg d_0l_0f^2\gg X^{\frac14-\kappa}}}\tfrac{1}{d_0(l_0f^2)^2}\Bigl(1+\min\Bigl\{\Bigl(\tfrac{X^{\frac12+\kappa}}{d_0^2l_0f^2}\Bigr)^{M+1},\Bigl(\tfrac{X^{\frac16+\kappa+\alpha}}{d_0}\Bigr)^{M+1}\Bigr\}\Bigr)\\
&\ll \log(X)+\sum_{ \substack{d_0\ll X^{\frac16+\kappa+\alpha}\\l_0f^2\gg \lfloor\frac{X^{\frac14-\kappa}}{d_0}\rfloor}}\tfrac{1}{d_0(l_0f^2)^2}\min\Bigl\{\Bigl(\tfrac{X^{\frac12+\kappa}}{d_0^2l_0f^2}\Bigr)^{M+1},\Bigl(\tfrac{X^{\frac16+\kappa+\alpha}}{d_0}\Bigr)^{M+1}\Bigr\}\\
&\ll \log(X)+X^{(\frac16+\kappa+\alpha)(M+1)}(X^{\kappa-\frac14}+X^{\alpha-\frac13})\\
&\leq X^{(\frac16+\kappa+\alpha)(M+1)-\frac14+\kappa}.\tag{$ii$}\label{fourierendest2}
\end{align*}
Where, we used the assumption that $2\kappa+\alpha<\frac1{12}$ therefore $\frac14-\kappa<\frac13-\alpha$. Finally, substituting \eqref{fourierendest1} and \eqref{fourierendest2} into \eqref{fourierendest0} gives,
\begin{align*}
S_G^{\nu\neq0}(X,\kappa,\alpha)&\ll \|G \|_{{M,1}}X^{\frac{3-M}{2}}\log(X)(X^{-\frac12+\kappa(M-1)}\log(X)+X^{(\frac16+\kappa+\alpha)(M+1)-\frac14+\kappa})\\
&\ll \|G \|_{{M,1}}X^{\frac{17-4M}{12}+(M+1)(\kappa+\alpha)+\kappa}\log(X).
\end{align*}

\end{itemize}
\end{proof}

\begin{cor}\label{cors}Let $\kappa,\alpha>0$ such that $2\kappa+\alpha<\frac{1}{12}$. Then for every $\epsilon>0$ we have
\[S(X,\kappa,\alpha)=O\bigl(X^{\frac{11}{12}+\kappa+\alpha+\frac{11\epsilon}{6}}\bigr),\]
where the implied constant depends only on $k,\kappa,\alpha,$ and $\epsilon$.
\end{cor}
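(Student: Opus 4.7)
The plan is to recover the unweighted estimate for $S(X,\kappa,\alpha)$ from the smoothed bound of Theorem~\ref{thmsigma} via a smoothing-plus-dyadic-decomposition argument. Fix a smoothing scale $\eta=X^{-\delta}$ for $\delta>0$ to be chosen, and construct smooth sandwich functions $G^{\pm}_\eta\in C_c^{\infty}([1/4,5/4])$ satisfying $G^{-}_\eta\leq \mathbf{1}_{[1/2,1]}\leq G^{+}_\eta$, with $G^{+}_\eta-G^{-}_\eta$ supported on a boundary set of Lebesgue measure $O(\eta)$ and Sobolev norms $\|G^{\pm}_\eta\|_{M,1}\ll_M\eta^{1-M}$ for every integer $M\geq0$.

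I then dyadically decompose $[1,X)=\bigsqcup_{j=0}^{\lceil\log_2 X\rceil}[Y_j/2,Y_j]$ with $Y_j=X\,2^{-j}$. Writing $a_n$ for the inner triple sum in \eqref{criticalsum}, each dyadic piece breaks up as
\[\sum_{n\in[Y_j/2,Y_j]}a_n=\sum_{n}G^{\pm}_\eta(n/Y_j)\,a_n+O\Bigl(\sum_{n\in\mathrm{bdry}_j}|a_n|\Bigr),\]
where $\mathrm{bdry}_j$ contains $O(\eta Y_j)$ integers. The smoothed sums are bounded by (the proof of) Theorem~\ref{thmsigma} applied at scale $Y_j$, producing an estimate of shape $\|G^{\pm}_\eta\|_{M,1}\,Y_j^{\frac{17}{12}+2\kappa+\alpha-M(\frac13-\kappa-\alpha)}\log X$. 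The boundary contributions are controlled by a trivial per-$n$ bound obtained from the Fourier decay \eqref{rangestar}--\eqref{rangestarstar} and the refined character sum bound \eqref{rangecharbound2}, splitting the $\xi$-sum according to whether $\xi\sqrt{n}/(lf^2)\ll 1$ or $\xi\sqrt{n}/(lf^2)\gg 1$.

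Finally I pick $\delta$ close to $\tfrac13-\kappa-\alpha$ and $M$ of order $1/\epsilon$ so that both the smoothed main term and the boundary error are $\ll X^{\frac{11}{12}+\kappa+\alpha+\frac{11\epsilon}{6}}$; the hypothesis $2\kappa+\alpha<\tfrac{1}{12}$ is precisely what guarantees that the two resulting constraints on $\delta$ are jointly satisfiable. Summing over the $O(\log X)$ dyadic scales contributes only a harmless $\log X$, which is absorbed into the $X^{\epsilon}$.

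The principal obstacle is the trivial boundary estimate: the crude bound $|Kl_{l,f}(\xi,n)|\ll\sqrt{l\xi n}$ coupled with $|I_{l,f}(\xi,n)|\leq 1$ is much too weak to be useful, and one must instead extract the Fourier decay of $I_{l,f}$ furnished by \eqref{rangestar}--\eqref{rangestarstar} simultaneously with the structural cancellation in $Kl_{l,f}$ from \eqref{rangecharbound2}, in order to force the per-$n$ contribution to remain below roughly $X^{1/4+o(1)}$ on average. Once that trivial-type bound is in place, the optimization over $\delta$ and $M$ and the collection of dyadic pieces is essentially bookkeeping.
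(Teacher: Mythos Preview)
Your scheme is structurally the paper's own argument: dyadically partition $[1,X)$, replace the sharp cutoff on each block by a smooth mollification, feed the smoothed sums into Theorem~\ref{thmsigma}, and then optimize the smoothing width against~$M$. After the change of convention between your boundary width $\eta Y_j=X^{-\delta}Y_j$ and the paper's $Y^{\delta}$, your choice ``$\delta$ close to $\tfrac13-\kappa-\alpha$'' is exactly the paper's $\delta=\tfrac23+\kappa+\alpha+\tfrac{5\epsilon}{6}$, and your $M\sim 1/\epsilon$ matches the paper's $M=2/\epsilon$.

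The one substantive difference is the treatment of the boundary terms. The paper does \emph{not} re-enter the analysis of $I_{l,f}$ and $Kl_{l,f}$ there at all: it simply quotes the pointwise bound $\tr(T_k(n))=O_\epsilon(n^{1/4+\epsilon})$, obtained from Theorem~1.1 of \cite{Altug:2015ab} for primes and extended to all $n$ via the Hecke recursions, and uses this as the per-$n$ bound on each boundary summand. Your proposal instead manufactures the needed $n^{1/4+o(1)}$ bound \emph{internally}, by combining the oscillatory decay \eqref{rangestar}--\eqref{rangestarstar} of $I_{l,f}$ with the refined Kloosterman estimate \eqref{rangecharbound2} and the case split on $\xi\sqrt{n}/(lf^2)$; this essentially reruns the mechanism of Theorems~\ref{sec3prop3}--\ref{sec3prop4} in the critical $lf^2$-range. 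Both routes land on the same exponent. Yours is more self-contained (no appeal to the prior paper's Ramanujan-type bound) at the cost of redoing a nontrivial piece of the $S_1$/$S_2$ analysis; the paper's is shorter because the per-$n$ input is already on the shelf.
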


\begin{proof} We start with bounding each individual $T_k(n)$. Although there are better bounds (in particular, the Ramanujan conjecture $|T_k(n)|\leq n^{\kappa}$ is known in this case thanks to Deligne \cite{Deligne:1974aa}, and see \cite{Sarnak:2004aa} for an excellent survey of bounds towards the Ramanujan conjecture in general) in order to keep the proof self contained, and use only the trace formula we will refer to Theorem 1.1 of \cite{Altug:2015ab} which, translated to the setting of the current paper, tells us that $\tr(T_k(p))=O(p^{\frac k4})$ for any prime $p$. Then, by the recursion relations the $T_k(p^k)$'s satisfy (cf. (73) on pg.102 of \cite{Serre:1996aa}) it is straightforward to see  that $\tr(T_k(n))=O_{\epsilon}(n^{\frac{1}{4}+\epsilon})$.
 
 Using this it is now straightforward to prove the theorem by choosing a specific mollifier $G(x)$.  Let $\phi\in C_c^{\infty}((-1,1))$ such that it is normalized by $\int \phi=1$, and let $\bold{1}_{[\frac12,1)}$ be the characteristic function of the interval $[\frac12,1)$. Let $1\geq\delta\geq0$ set $\phi_{Y^\delta}(x)=Y^{1-\delta}\phi(xY^{1-\delta})$. Using $\phi_{Y^\delta}(x)$ define $G_{Y^\delta}(x):=\bold{1}_{[\frac12,1)}*\phi_{Y^\delta}(x)$, i.e.
\begin{equation}
G_{Y^\delta}(x)=Y^{1-\delta}\int\bold{1}_{[\frac12,1)}(x-y)\phi(yY^{1-\delta})dy.
\end{equation}
Then it is straightforward to see that 
\begin{equation}\label{moll}
 G_{Y^\delta}\left(\frac{x}{Y}\right)=\begin{cases}1& \frac Y2+Y^{\delta}\leq x\leq Y- Y^{\delta}\\ O(1)& |x-\frac Y2|< Y^{\delta}\,\, or \,\, |x-Y|< Y^{\delta}\\ 0 & otherwise \end{cases},
 \end{equation}
 and the 
\begin{equation}\label{moll2}
\|G_{Y^\delta}(x)\|_{{M,1}}=O(Y^{M(1-\delta)}).
\end{equation}
Then, by \eqref{moll} and the bound $\tr(T_k(n))=O(n^{\frac14+\epsilon})$ we get 
\begin{align*}
S(X,\kappa,\alpha)&=\sum_{j=1}^{\log(X)}\sum_{2^{j-1}\leq n<2^j}\sqrt{n}\sum_{\substack{X^{\frac12+\kappa}\gg lf^2\gg X^{\frac14-\kappa}}}\tfrac{1}{l^2f^3}\sum_{\substack{ \xi\in\mathbb{Z}\backslash \{0\}\\ lf^2\xi\ll X^{\frac12+\kappa}\\ \xi\ll X^{\frac16+\kappa+\alpha}}}Kl_{l,f}(\xi,n)I_{l,f}(\xi,n)\\
&\ll \sum_{j=1}^{\log(X)} \left\{S_{G_{2^{j\delta}}}(2^j,\kappa,\alpha)+2^{j\delta+\frac14+\epsilon}\right\},\tag{*}\label{moll3}
\end{align*}
where the implied constant depends only on $k,\delta$ and $\epsilon$. By Theorem \ref{thmsigma} for every $M>0$, the sub-sums, $S_{G_{2^{j\delta}}}(2^j,\kappa,\alpha)$, satisfy
\[S_{G_{2^{j\delta}}}(2^j,\kappa,\alpha)=O(\|G_{2^{j\delta}} \|_{{M,1}}j(2^j)^{\frac{17}{12}+2\kappa+\alpha-M(\frac13-\kappa-\alpha)}),\]
where the implied constant depends only on $k,\kappa,\alpha,$ and $M$. Substituting this bound in \eqref{moll3} and using \eqref{moll2} we get
\begin{align*}
S(X,\kappa)&=O\Bigl(\sum_{j=1}^{\log(X)}j2^{jM(1-\delta)}(2^j)^{\frac{17}{12}+2\kappa+\alpha-M(\frac13-\kappa-\alpha)}+2^{j\delta+\frac14+\epsilon}\Bigr)\\
&=O\bigl(X^{M(\frac23+\kappa+\alpha-\delta)+\frac{17}{12}+2\kappa+\alpha}\log(X)+X^{\delta+\frac14+\epsilon}\bigr),
\end{align*}
where the implied constant depends only on $k,\epsilon,M,$ and $\delta$. Finally choosing $\delta=\frac23+\kappa+\delta+\frac{5\epsilon}{6}$ and $M=\frac{2}{\epsilon}$ the corollary follows.

\end{proof}

\section{Local analysis}\label{seclocal}
In this section we will derive bounds on the Fourier transforms and character sums that appear after the Poisson summation on the $n$-sum.

\subsection{Archimedean analysis}

We begin with a technical lemma that will be useful for the rest of this section. For what follows let us fix two positive integers $l$ and $f,$ and let $X$ denote an independent parameter as in the previous sections. 

\begin{lemma}\label{archlem1}Let $\Phi\in\mathcal{S}(\mathbb{R}),\xi,\alpha\in \mathbb{Z},$ and $\nu\in\mathbb{Z}\backslash\{0\}$. Let,
\[V_{l,f,\alpha}(\xi,\nu,X):=\int G\left(\tfrac{y}{X}\right)y^{\frac{\alpha}{2}}\Phi\left(\tfrac{lf^2}{\sqrt{4y}\sqrt{1-x^2}}\right)e\left(\tfrac{-(x\xi\sqrt{4y}+y\nu)}{4lf^2}\right)dy.\]
Then, for any $G(x)\in C_c^{\infty}([\frac14,\frac54])$ and $M,N\in\mathbb{N}$ we have

\[V_{l,f,\alpha}(\xi,\nu,X)=O\left(\| G \|_{M,1}\tfrac{X^{1+\frac\alpha2}}{\nu^MX^{\frac M2}}\left[\left(\tfrac{lf^2}{\sqrt{X}}\right)^M+\xi^M\right]\left(\tfrac{\sqrt{X}\sqrt{1-x^2}}{lf^2}\right)^N\right),\]
where the implied constant depends only on $\Phi,\alpha,M,$ and $N$.
\end{lemma}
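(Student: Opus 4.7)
The plan is to exploit the linearity of the $y\nu$-component of the phase by repeated integration by parts against it, which is what produces the factor $\nu^{-M}$ in the target bound. First I would substitute $y=Xt$ to rescale onto a compact interval; this brings out an overall factor $X^{1+\alpha/2}$ and rewrites the integral as
\[V_{l,f,\alpha}(\xi,\nu,X) = X^{1+\frac{\alpha}{2}}\int A(t)\, e(\phi_1(t))\,e(\phi_2(t))\,dt,\]
where $A(t) = G(t)\,t^{\alpha/2}\,\Phi(u(t))$ with $u(t) = \tfrac{lf^2}{2\sqrt{X(1-x^2)t}}$, $\phi_1(t) = -\tfrac{x\xi\sqrt{X}\sqrt{t}}{2lf^2}$, and $\phi_2(t) = -\tfrac{X\nu t}{4lf^2}$. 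The crucial observation is that $\phi_2$ is linear in $t$, so $e(\phi_2)$ is an eigenfunction of $d/dt$ with eigenvalue $-\tfrac{\pi i X\nu}{2lf^2}$. Integrating by parts $M$ times against $e(\phi_2)$ alone---the boundary terms vanish because $\operatorname{supp}G\subseteq[1/4,5/4]$---produces the prefactor $\bigl(\tfrac{2lf^2}{\pi i X\nu}\bigr)^M$ together with $\tfrac{d^M}{dt^M}\bigl[A(t)e(\phi_1(t))\bigr]$ inside the integral.

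Next I would expand this $M$-th derivative via Leibniz, so the $k$-th term has $\binom{M}{k}A^{(M-k)}(t)\,\tfrac{d^k}{dt^k}e(\phi_1(t))$. Since $|x|<1$ (implicit from the factor $\sqrt{1-x^2}$) and $t\asymp 1$ on $\operatorname{supp}G$, Faà di Bruno gives $\bigl|\tfrac{d^k}{dt^k}e(\phi_1(t))\bigr|\ll_k 1+\bigl(\tfrac{|\xi|\sqrt{X}}{lf^2}\bigr)^k$. The derivatives of $A$ split into three pieces: derivatives of $G$ are absorbed into $\|G\|_{M,1}$ once we integrate in $t$; derivatives of $t^{\alpha/2}$ are $O(1)$ on the compact support; and derivatives of $\Phi(u(t))$ expand (via a second application of Faà di Bruno) into finite sums of terms $\Phi^{(r)}(u(t))\cdot t^{-s}$ with $r\leq M-k$. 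The Schwartz hypothesis $\Phi\in\mathcal{S}(\mathbb{R})$, used as $|\Phi^{(r)}(u)|\ll_{r,N}u^{-N}$, then supplies exactly the decay factor $\bigl(\tfrac{\sqrt{X(1-x^2)}}{lf^2}\bigr)^N$ claimed in the lemma.

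Assembling the estimates, the $k$-th Leibniz summand is bounded by
\[\|G\|_{M,1}\,X^{1+\frac{\alpha}{2}}\left(\frac{lf^2}{X\nu}\right)^{\!M}\!\left(\frac{|\xi|\sqrt{X}}{lf^2}\right)^{\!k}\!\left(\frac{\sqrt{X(1-x^2)}}{lf^2}\right)^{\!N}.\]
Summing in $k$ from $0$ to $M$ is dominated (with a constant depending only on $M$) by the two endpoints, since the factor $\bigl(|\xi|\sqrt{X}/lf^2\bigr)^k$ is monotone in $k$ and hence maximized at $k=0$ or $k=M$ according to whether $|\xi|\sqrt{X}/lf^2$ is $\leq 1$ or $\geq 1$. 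The $k=0$ endpoint produces $(lf^2/\sqrt{X})^M$ inside the bracket and the $k=M$ endpoint produces $\xi^M$, which together give exactly the claimed bound.

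The main technical point I anticipate is the Faà di Bruno bookkeeping for $\tfrac{d^j}{dt^j}\Phi(u(t))$: one has to verify that the various intermediate terms of the form $\Phi^{(r)}(u)\cdot u^{\mathrm{power}}\cdot t^{-\mathrm{power}}$ produced by the chain rule are all absorbed cleanly by the single Schwartz bound $|u^s\Phi^{(r)}(u)|\ll_{r,s,N}u^{-N}$ uniformly in $l,f,X,x$, so that precisely one factor $\bigl(\sqrt{X(1-x^2)}/lf^2\bigr)^N$ is left standing outside the integral. Everything else (the $\nu^{-M}$, the dichotomy $(lf^2/\sqrt{X})^M$ vs.\ $\xi^M$, and the Sobolev norm of $G$) emerges directly from the IBP/Leibniz bookkeeping sketched above.
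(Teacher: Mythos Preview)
Your proposal is correct and follows essentially the same route as the paper: rescale $y\mapsto Xt$, integrate by parts $M$ times against the linear phase $e(-X\nu t/4lf^2)$, then control the resulting $M$-th derivative using the Schwartz decay of $\Phi$ to produce the $(\sqrt{X(1-x^2)}/lf^2)^N$ factor and the derivatives of $e(\phi_1)$ to produce the $\xi^M$ alternative. The paper is terser---it writes the single inequality $\bigl(\tfrac{lf^2}{\sqrt{X}\sqrt{1-x^2}}\bigr)^{\beta_1}\Phi^{(\beta_2)}(\cdot)\ll\bigl(\tfrac{\sqrt{X}\sqrt{1-x^2}}{lf^2}\bigr)^{N}$ in place of your Fa\`a di Bruno bookkeeping, and records the final derivative bound as $\ll\bigl(\tfrac{\sqrt{X(1-x^2)}}{lf^2}\bigr)^N\bigl(1+(\tfrac{\xi\sqrt{X}}{lf^2})^M\bigr)$ rather than your endpoint-in-$k$ argument---but the content is the same.
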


\begin{proof}First using the change of variables $y\mapsto Xy$ and then applying integration by parts $M$-times (keeping in mind that $G$ is compactly supported) gives,
\begin{equation}\label{ibp1}
V_{l,f,\alpha}(\xi,\nu,X)=X^{1+\frac{\alpha}{2}}\left(\tfrac{4lf^2}{-2\pi i X \nu}\right)^M\int \tfrac{d^M}{dy^M}\left\{G\left(y\right)y^{\frac{\alpha}{2}}\Phi\left(\tfrac{lf^2}{\sqrt{4Xy}\sqrt{1-x^2}}\right)e\left(\tfrac{-x\xi\sqrt{4Xy}}{4lf^2}\right)\right\}e\left(\tfrac{-Xy\nu}{4lf^2}\right)dy.
\end{equation}
The $M$'th derivative above is a combination of derivatives (of orders $\leq M$) of $G,$ $y^{\frac\alpha2},$ $\Phi\left(\frac{lf^2}{\sqrt{4Xy}\sqrt{1-x^2}}\right),$ and the exponential. Note that since $G(y)$ is compactly supported away from $y=0$ the negative powers of $y$ that appear in the derivative are bounded uniformly depending only on $M$ and cause no problem.  The only point we need to pay attention is the derivatives of $\Phi$. To that end, note that since $\Phi$ decays faster than any polynomial, for any $\beta_1,\beta_2\in\mathbb{N}$ we have
\[\left(\tfrac{lf^2}{\sqrt{X}\sqrt{1-x^2}}\right)^{\beta_1}\Phi^{(\beta_2)}\left(\tfrac{lf^2}{\sqrt{4Xy}\sqrt{1-x^2}}\right)\ll_{\Phi,N_1,\beta_1,\beta_2} \left(\tfrac{\sqrt{X}\sqrt{1-x^2}}{lf^2}\right)^{N}. \]
Using this bound we then get
\begin{equation}\label{ibp2}
\tfrac{d^M}{dy^M}\left\{G\left(y\right)y^{\frac{\alpha}{2}}\Phi\left(\tfrac{lf^2}{\sqrt{4Xy}\sqrt{1-x^2}}\right)e\left(\tfrac{-x\xi\sqrt{4Xy}}{4lf^2}\right)\right\}\ll \left(\tfrac{\sqrt{X}\sqrt{1-x^2}}{lf^2}\right)^N\left(1+\left(\tfrac{\xi\sqrt{X}}{lf^2}\right)^M\right),
\end{equation}
where the implied constant depends only on $\Phi, M,N,$ and $\| G \|_{M,1}$. Combining \eqref{ibp1} with \eqref{ibp2} finishes the proof.

\end{proof}

For the next corollary let us introduce the following notation,
\begin{equation}\label{fourier}
J_{l,f}(\xi,\nu,X):=\iint \sqrt{y}\,G\left(\tfrac{y}{X}\right)\theta_{\infty}(x)\left\{F\left(\tfrac{lf^2}{\sqrt{4y}\sqrt{1-x^2}}\right)+\tfrac{lf^2}{\sqrt{4y}\sqrt{1-x^2}}H\left(\tfrac{lf^2}{\sqrt{4y}\sqrt{1-x^2}}\right)\right\}e\left(\tfrac{-(x\xi\sqrt{4y}+y\nu)}{4lf^2}\right)dxdy.
\end{equation}

\begin{prop}\label{archprop}Let $\xi,\alpha\in \mathbb{Z},$ and $\nu\in\mathbb{Z}\backslash\{0\}$. Then, for any $G(x)\in C_c^{\infty}([\frac14,\frac54])$ and $M,N\in\mathbb{N}$ we have
\begin{align*}
J_{l,f}(\xi,0,X)&\ll \| G \|_1\tfrac{X^{\frac{N+3}{2}}}{(lf^2)^N\xi^{N+2}},\\
J_{l,f}(\xi,\nu,X)&\ll \| G \|_{M,1} \tfrac{X^{\frac{N-M+ 3}{2}}}{\nu^M(lf^2)^{N}}\left[\left(\tfrac{lf^2}{\sqrt{X}}\right)^M+\xi^M\right],
\end{align*}
where the implied constant depends only on $\theta_{\infty},  F,M,$ and $N$.
\end{prop}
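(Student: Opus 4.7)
The plan is to split the bracket $\{F(t)+tH(t)\}$ in \eqref{fourier} into its two summands and to handle each with the help of Lemma \ref{archlem1}. The $F$-piece can be addressed directly since $F$ is Schwartz; for the $H$-piece, $tH(t)$ is not Schwartz, so we substitute the Mellin representation \eqref{h} of $H$ and shift the resulting $u$-contour to extract the $(lf^2)^{-N}$ decay.

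For $\nu\neq 0$, write $J_{l,f}(\xi,\nu,X)=J^F+J^H$. In $J^F$ the inner $y$-integral (with $x$ fixed) is exactly $V_{l,f,1}(\xi,\nu,X)$ with $\Phi=F$, so Lemma \ref{archlem1} yields
\[|V_{l,f,1}(\xi,\nu,X)|\ll\|G\|_{M,1}\frac{X^{3/2}}{\nu^M X^{M/2}}\bigl[(lf^2/\sqrt X)^M+\xi^M\bigr]\Bigl(\frac{\sqrt X\sqrt{1-x^2}}{lf^2}\Bigr)^N,\]
and integrating against $\theta_\infty(x)$ (bounded, supported in $[-1,1]$) absorbs the $(\sqrt{1-x^2})^N$ into an $O(1)$ constant, giving the claimed bound for $J^F$. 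For $J^H$, substitute $tH(t)=\frac{\sqrt\pi}{2\pi i}\int_{(1)}\frac{\Gamma((1+u)/2)}{\Gamma((2-u)/2)}\pi^{-u}t^{1-u}\tilde F(u)\,du$, swap integrations (justified by the rapid decay of $\tilde F$ on vertical lines), and shift the contour from $\Re u=1$ to $\Re u=1+N$. No poles are crossed: $\tilde F$ has its only pole at $u=0$, $\Gamma((1+u)/2)$ has poles only at negative odd integers, and $1/\Gamma((2-u)/2)$ contributes only zeros. On the new contour $|t^{1-u}|=t^{-N}=(\sqrt{4y}\sqrt{1-x^2}/(lf^2))^N$, producing the $(lf^2)^{-N}$ factor, and the remaining $y$-integral is bounded by $M$-fold integration by parts in $y$ (the same mechanism as in Lemma \ref{archlem1} applied with $\Phi\equiv 1$), yielding the $\nu^{-M}$ and $[(lf^2/\sqrt X)^M+\xi^M]$ factors; the outer $u$- and $x$-integrations converge absolutely.

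For $\nu=0$, integration by parts in $y$ gains nothing since the $y$-phase vanishes, so we integrate by parts in $x$ instead, using the oscillation $e(-x\xi\sqrt y/(2lf^2))$ (whose $x$-derivative is $-\xi\sqrt y/(2lf^2)$) to produce $\xi^{-(N+2)}$ decay after $N+2$ iterations; the boundary terms vanish because $\theta_\infty(\pm 1)=0$. To simultaneously extract the $(lf^2)^{-N}$ factor, we Mellin-expand both pieces, using $F(t)=\frac{1}{2\pi i}\int_{(c)}\tilde F(u)t^{-u}\,du$ for the $F$-piece and \eqref{h} for the $H$-piece, and shift the $u$-contours to the right. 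On a shifted contour $\Re u=\sigma$ the Mellin factor $|(lf^2)^{-u}|$ contributes $(lf^2)^{-\sigma}$, the $y$-integral of $G(y/X)y^{(1+u)/2}$ (respectively $G(y/X)y^{u/2}$) contributes $\sim X^{(3+\sigma)/2}\|G\|_1$ (respectively $X^{1+\sigma/2}\|G\|_1$), and the $N+2$ $x$-IBPs contribute $(lf^2/(\xi\sqrt X))^{N+2}$; choosing $\sigma=2N+2$ for the $F$-piece and $\sigma=2N+3$ for the $H$-piece balances these to exactly $\|G\|_1 X^{(N+3)/2}/((lf^2)^N\xi^{N+2})$.

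The main technical obstacle is precisely this choice of shift: $\sigma$ must be large enough that the $(N+2)$-fold $x$-differentiation of the Mellin-expanded integrand remains integrable near $x=\pm 1$, where each $x$-derivative brings a factor of $x/(1-x^2)$ and the companion factor $(1-x^2)^{u/2}$ (respectively $(1-x^2)^{(u-1)/2}$ for the $H$-piece), together with the vanishing of $\theta_\infty$ at the endpoints, must absorb these singularities. With the shifts above, the net behaviour at the endpoints is $(1-x^2)^{-1/2}$, which remains integrable, while the rapid decay of $\tilde F(u)$ on vertical lines (combined with the controlled growth of the Gamma ratio) ensures absolute convergence of the outer $u$-integral.
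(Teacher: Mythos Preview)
Your argument is essentially correct, but it takes a considerably longer route than the paper's, and the detour stems from a misconception: you assert that $tH(t)$ is not Schwartz and therefore resort to the Mellin representation \eqref{h} plus a contour shift. In fact the paper (invoking Lemma~4.5 of \cite{Altug:2015ab}) records that both $F$ and $H$ lie in $\mathcal{S}(\mathbb{R})$ in the sense of the paper's conventions (rapid decay of all derivatives as $x\to\infty$), and hence so does $tH(t)$. With this in hand the paper's proof of the $\nu\neq 0$ case is immediate: after writing $J_{l,f}=J^1+J^2$ exactly as you do, it applies Lemma~\ref{archlem1} to \emph{both} pieces directly, taking $\Phi=F$, $\alpha=1$ for $J^1$ and $\Phi=H$, $\alpha=0$ for $J^2$ (the extra prefactor $lf^2/2$ in $J^2$ is constant in $y$, so one simply replaces $N$ by $N+1$ in that application). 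Your Mellin-shift argument for $J^H$ is not wrong---it amounts to re-proving the rapid decay of $H$ via its integral representation---but it is unnecessary once one knows $H$ is Schwartz.

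For $\nu=0$ the contrast is similar. The paper does not integrate by parts in $x$ at all: it simply cites Corollary~4.8 of \cite{Altug:2015ab}, which already gives the bound $\bigl(\sqrt{Xy}/(lf^2\xi)\bigr)^N\xi^{-2}$ for the inner $x$-integral, and then integrates trivially against $G(y)y^{(N+1)/2}$. Your approach---Mellin-expand both $F$ and $H$, shift to $\Re u=2N+2$ (resp.\ $2N+3$), then perform $N+2$ integrations by parts in $x$---does work, and your bookkeeping on the endpoint behaviour (net $(1-x^2)^{-1/2}$ after the shifts, with the vanishing of $\theta_\infty$ at $\pm 1$ absorbing the rest) is sound. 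The trade-off is that your proof is self-contained, whereas the paper's is a two-line consequence of results already established in the companion paper; what you gain in independence you pay for in length and in the need to track polynomial-in-$u$ growth from the repeated differentiation against the rapid decay of $\tilde F$ on vertical lines.
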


\begin{proof} We first remark that because $G(y)$ is compactly supported away from $y=0$ we always have $y>0$. 

\begin{itemize}

\item $\nu=0$. By corollary 4.8. of \cite{Altug:2015ab}, for any $N\geq0$ and $y\neq0$, we have 
\[\int \theta_{\infty}(x)\left\{F\left(\tfrac{lf^2}{\sqrt{4Xy}\sqrt{1-x^2}}\right)+\tfrac{lf^2}{\sqrt{4Xy}\sqrt{1-x^2}}H\left(\tfrac{lf^2}{\sqrt{4Xy}\sqrt{1-x^2}}\right)\right\}e\left(\tfrac{-x\xi\sqrt{4Xy}}{4lf^2}\right)dx\ll \left(\tfrac{\sqrt{Xy}}{lf^2\xi}\right)^N\tfrac{1}{\xi^2}.\]
Using the change of variables $y\mapsto Xy$ and using the above bound gives,
\begin{align*}
J_{l,f}(\xi,0,X)&\ll \tfrac{X^{\frac{N+3}{2}}}{(lf^2)^N\xi^{N+2}}\int G(y)y^{\frac{N+1}{2}}dy\ll \| G \|_1 \tfrac{X^{\frac{N+3}{2}}}{(lf^2)^N\xi^{N+2}},
\end{align*}
where the implied constants depend only on $\theta_{\infty},F,$ and $N$. This finishes the proof of the case $\nu=0$.

\item $\nu\neq0$. Let,
\[J_{l,f}(\xi,\nu,X)=J^1_{l,f}(\xi,\nu,X)+J^2_{l,f}(\xi,\nu,X),\]
where
\begin{align*}
J^1_{l,f}(\xi,\nu,X)&:=\iint G\left(\tfrac{y}{X}\right)\sqrt{y}\theta_{\infty}(x)F\left(\tfrac{lf^2}{\sqrt{4y}\sqrt{1-x^2}}\right)e\left(\tfrac{-(x\xi\sqrt{4y}+y\nu)}{4lf^2}\right)dxdy,\\
J^2_{l,f}(\xi,\nu,X)&:=\tfrac{lf^2}{2}\iint G\left(\tfrac{y}{X}\right)\tfrac{\theta_{\infty}(x)}{\sqrt{1-x^2}}H\left(\tfrac{lf^2}{\sqrt{4y}\sqrt{1-x^2}}\right)e\left(\tfrac{-(x\xi\sqrt{4y}+y\nu)}{4lf^2}\right)dxdy,
\end{align*}

 By lemma 4.5 of \cite{Altug:2015ab} both $F$ and $H$ are in $\mathcal{S}(\mathbb{R})$. Moreover recall that $\theta_{\infty}(x)$ and $G(y)$ are both compactly supported. Therefore, the double integrals in $J^1_{l,f}(\xi,\nu,X)$ and $J^2_{l,f}(\xi,\nu,X)$ are both absolutely convergent, and hence we can interchange the order of integration in both. Doing so and using lemma \ref{archlem1} in the $y$-integrals (take $\alpha=1$ in $J^1_{l,f}(\xi,\nu,X)$ and $\alpha=0$ in $J^2_{l,f}(\xi,\nu,X)$) gives, for any $M,N_0,N_1\in\mathbb{N}$, 
\begin{align*}
J^1_{l,f}(\xi,\nu,X)&\ll \| G \|_{M,1}\tfrac{X^{\frac{N_0-M+ 3}{2}}}{\nu^M(lf^2)^{N_0}}\left[\left(\tfrac{lf^2}{\sqrt{X}}\right)^M+\xi^M\right]\int \theta_{\infty}(x)(1-x^2)^{\frac{N_0}{2}}dx,\\
J^2_{l,f}(\xi,\nu,X)&\ll \| G \|_{M,1}\tfrac{X^{1+\frac{N_1-M}{2}}}{\nu^M(lf^2)^{N_1-1}}\left[\left(\tfrac{lf^2}{\sqrt{X}}\right)^M+\xi^M\right]\int \theta_{\infty}(x)(1-x^2)^{\frac{N_1-1}{2}}dx.
\end{align*}
Finally choosing $N_0=N$ and $N_1=1+N$, which guarantees that the $x$-integrals converge, finishes the proof.
\end{itemize}
\end{proof}

\subsection{Non-Archimedean analysis}

Let us first introduce some notation that will be used throughout the calculations. Let $p$ be a prime. For any integer $A\in\mathbb{Z}$ let $v_p(A)$ denote the $p$-adic valuation of $A$. In what follows we will denote the ``$p$-part'' and the ``prime to $p$-part'' of $A$ respectively by $A_{(p)}$ and $A^{(p)}$. They are defined by,
\[A_{(p)}:=q^{v_p(A)}\hspace{0.5in}and\hspace{0.5in}A^{(p)}:=\tfrac{A}{A_{(p)}}.\]
For an integer $A\in\mathbb{Z}\backslash \{0\},$ let $rad(A)$ denote the radical of $A$. i.e.
\[rad(A)=\prod_{\substack{p\mid A\\p-prime}}p.\]
Finally, let us introduce the character sums that will be the focus of this section:
\begin{equation}\label{omega}
\omega_{l,f}(\xi,\nu):=\sum_{b\bmod 4lf^2}Kl_{l,f}(\xi,\nu)e\left(\tfrac{b\nu}{4lf^2}\right).
\end{equation}

\begin{lemma}\label{multlem}Let $k_1,k_2\in \mathbb{N}$. Then,
\[\omega_{l,f}(\xi,\nu)=\prod_{p}\omega_{p^{v_p(l)},p^{v_p(f)}}\left(((4lf^2)^{(p)})^{-1}\xi,((4lf^2)^{(p)})^{-1}\nu\right)\]
\end{lemma}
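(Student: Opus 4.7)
The plan is to derive the factorization directly from the Chinese Remainder Theorem applied to the modulus $4lf^2$. Set $a_p := v_p(4lf^2)$ and $N_p := (4lf^2)^{(p)}$, so that $4lf^2 = p^{a_p}N_p$ with $\gcd(p,N_p)=1$. The CRT isomorphism $\mathbb{Z}/4lf^2\mathbb{Z} \cong \prod_p \mathbb{Z}/p^{a_p}\mathbb{Z}$, $b \mapsto (b \bmod p^{a_p})_p$, allows me to rewrite the sum over $b \bmod 4lf^2$ as an iterated sum over $b_p \bmod p^{a_p}$.

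First I would handle the additive character. Standard CRT yields the identity
\[
\tfrac{1}{4lf^2} \equiv \sum_{p} \tfrac{N_p^{-1}}{p^{a_p}} \pmod{1},
\]
where $N_p^{-1}$ denotes the inverse of $N_p$ modulo $p^{a_p}$. Applying this to the exponential factor produces
\[
e\Bigl(\tfrac{b\nu}{4lf^2}\Bigr) = \prod_p e\Bigl(\tfrac{b_p \cdot N_p^{-1}\nu}{p^{a_p}}\Bigr),
\]
which already explains why the twist $N_p^{-1}\nu$ appears as the second argument of each local factor.

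Next I would invoke the CRT-multiplicativity of the Kloosterman-type sum $Kl_{l,f}(\xi,b)$ itself. The definition of $Kl_{l,f}(\xi,b)$ in \cite{Altug:2015ab} is built from characters and residues modulo $lf^2$, so by the same CRT splitting it factors as
\[
Kl_{l,f}(\xi,b) = \prod_p Kl_{p^{v_p(l)},p^{v_p(f)}}\bigl(N_p^{-1}\xi,\, b_p\bigr),
\]
where the $N_p^{-1}\xi$ twist arises because the variable $\xi$ enters the definition through a phase of the form $e(\xi(\cdot)/lf^2)$, which splits in the same way as the exponential above. Plugging the two factorizations into the definition \eqref{omega} and exchanging the sum over $b$ with a product of sums over the $b_p$, I obtain
\[
\omega_{l,f}(\xi,\nu) = \prod_p \sum_{b_p \bmod p^{a_p}} Kl_{p^{v_p(l)},p^{v_p(f)}}\bigl(N_p^{-1}\xi, b_p\bigr) e\Bigl(\tfrac{b_p \cdot N_p^{-1}\nu}{p^{a_p}}\Bigr),
\]
and the inner sum is by definition $\omega_{p^{v_p(l)},p^{v_p(f)}}(N_p^{-1}\xi,N_p^{-1}\nu)$, yielding the claim.

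The main obstacle will be correctly verifying the CRT-factorization of $Kl_{l,f}(\xi,b)$ with the specific twist $N_p^{-1}\xi$ on the $\xi$-variable, since I have to match the conventions of \cite{Altug:2015ab} and carefully distinguish the role of $l$ (through which the Kloosterman phase runs) from the role of $f^2$ (through which the ``determinant'' enters). Everything else is a routine bookkeeping exercise with CRT identities.
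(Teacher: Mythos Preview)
Your proposal is correct and follows exactly the approach indicated in the paper: the paper's proof is a one-line reference to the Chinese Remainder Theorem, deferring all details to lemma~B.1 of \cite{Altug:2015ab}, and what you have written is precisely an unpacking of that CRT argument. The ``main obstacle'' you flag---the multiplicativity of $Kl_{l,f}(\xi,b)$ with the twist $N_p^{-1}\xi$---is exactly the content of lemma~B.1 of \cite{Altug:2015ab}, so once you invoke that result the rest is, as you say, bookkeeping.
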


\begin{proof}The proof follows from the Chinese remainder theorem. The details are exactly the same as in lemma B.1 of \cite{Altug:2015ab}.

\end{proof}

Lemma \ref{multlem} reduces the calculation of $\omega_{l,f}(\xi,\nu)$ to $\omega_{p^{k_1},p^{k_2}}(\alpha,\beta),$ where $\alpha,\beta\in\mathbb{Z}$.

\begin{lemma}\label{charsumlem1}Let $p$ be an odd prime, $m\in\mathbb{N},$ and $\alpha,\beta\in\mathbb{Z}$. Then,
\[\sum_{a\bmod p^{m}}e\left(\tfrac{2a\alpha +a^2\beta}{p^{m}}\right)=\begin{cases}p^m& \beta=0,\,\, v_p(\alpha)\geq m\\ 0 & \beta=0,\,\, v_p(\alpha)< m\\p^{\frac{m+\min\{m,v_p(\delta)\}}{2}}\eta(p^{m-v_p(\delta)})\left(\tfrac{p^{-v_p(\delta)}\beta}{p^{m-v_p(\delta)}}\right)e\left(\tfrac{-\beta(\alpha_0\beta_0^{-1})^2}{p^{m}}\right)& \beta\neq0\end{cases},\]
where
\[\delta=\gcd(\alpha,\beta),\hspace{0.5in}\alpha=\delta\alpha_0,\hspace{0.5in}\beta=\delta\beta_0,\]
and for any $n\in \mathbb{N}$
\[\bar{\eta}(n)=\tfrac{1+i^n}{1+i}\]
($\bar{\eta}(n)$ denoting the complex conjugate of $\eta(n)$). Finally, we emphasize that if $v_p(\beta)> v_p(\gcd(\alpha,\beta))$  the right hand side is $0$ because of the appearance of the Jacobi symbol, $\left(\tfrac{p^{-v_p(\delta)}\beta}{p^{m-v_p(\delta)}}\right)$. 
\end{lemma}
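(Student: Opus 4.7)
The plan is to evaluate this sum as a classical quadratic Gauss sum with a linear term, splitting into cases by the value of $\beta$ and by the comparison of $v_p(\beta)$ with $v_p(\delta)$. The case $\beta=0$ is a geometric series: $\sum_{a \bmod p^m} e(2a\alpha/p^m)$ equals $p^m$ or $0$ according as $p^m \mid 2\alpha$ or not, and since $p$ is odd this is equivalent to $v_p(\alpha) \geq m$, giving the first two branches of the statement immediately.

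For $\beta \neq 0$, I would write $\alpha = \delta\alpha_0$, $\beta = \delta\beta_0$ with $\gcd(\alpha_0,\beta_0)=1$, set $v = v_p(\delta)$, and $\delta = p^v\delta_1$ with $\gcd(\delta_1,p)=1$. Assuming $v < m$ (else the sum is trivially $p^m$), set $k = m-v$ and decompose each $a \in \mathbb{Z}/p^m\mathbb{Z}$ uniquely as $a = b + p^k c$ with $b$ modulo $p^k$ and $c$ modulo $p^v$. Since $a^2 \equiv b^2 \pmod{p^k}$ and the exponential reduces to $e(\delta_1(2b\alpha_0 + b^2\beta_0)/p^k)$, the summand depends only on $b$ and the inner $c$-sum contributes a factor $p^v$, yielding
\[
\sum_{a \bmod p^m} e\left(\tfrac{2a\alpha + a^2\beta}{p^m}\right) = p^v \sum_{b \bmod p^k} e\left(\tfrac{\delta_1(2b\alpha_0 + b^2\beta_0)}{p^k}\right).
\]

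If $p \mid \beta_0$ (equivalently $v_p(\beta) > v_p(\delta)$), then $p \nmid \alpha_0$ by coprimality, and the substitution $b \mapsto b + \beta_0(2\alpha_0)^{-1}b^2$, whose Jacobian is $1 + \beta_0\alpha_0^{-1}b \equiv 1 \pmod p$, is a bijection of $\mathbb{Z}/p^k\mathbb{Z}$ by Hensel's lemma. After this change of variable the inner sum collapses to $\sum_u e(2\delta_1\alpha_0 u/p^k)$, which vanishes because $2\delta_1\alpha_0$ is a unit modulo $p^k$. This matches the RHS, whose Jacobi symbol $\left(\tfrac{\beta/p^v}{p^{m-v}}\right)$ is also zero since $p$ divides $\beta/p^v$.

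In the remaining main subcase $p \nmid \beta_0$, $\beta_0$ is invertible modulo $p^k$ and I would complete the square $2b\alpha_0 + b^2\beta_0 \equiv \beta_0(b + \alpha_0\beta_0^{-1})^2 - \alpha_0^2\beta_0^{-1} \pmod{p^k}$. Shifting $b$ and pulling out the constant phase $e(-\delta_1\alpha_0^2\beta_0^{-1}/p^k) = e(-\beta(\alpha_0\beta_0^{-1})^2/p^m)$ reduces the inner sum to the classical quadratic Gauss sum $\sum_{b' \bmod p^k} e(\delta_1\beta_0\, b'^2/p^k)$ with unit coefficient $\delta_1\beta_0 = \beta/p^v$, whose standard evaluation is $p^{k/2}\,\eta(p^k)\,\left(\tfrac{\delta_1\beta_0}{p^k}\right)$, the $\eta$-factor recording the quartic root of unity obtained by Hensel-lifting the Gauss sum from $\mathbb{F}_p$. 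Multiplying by the prefactor $p^v$ and the constant phase yields the stated formula, with exponent $(m+v)/2 = (m+\min\{m,v\})/2$. The main obstacle is really only keeping the bookkeeping consistent across the degenerate cases ($p\mid\beta_0$ and $v\geq m$); the only genuinely nontrivial input is the classical quadratic Gauss-sum evaluation over $\mathbb{Z}/p^k\mathbb{Z}$, which I would cite from a standard reference.
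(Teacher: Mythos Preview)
Your proof is correct and follows essentially the same route as the paper: treat $\beta=0$ as a character sum, then for $\beta\neq0$ pull out the factor $p^{v_p(\delta)}$ via the decomposition $a=b+p^{m-v_p(\delta)}c$, complete the square when $p\nmid\beta_0$, and cite the classical quadratic Gauss sum evaluation. The only minor difference is in the vanishing sub-case $p\mid\beta_0$: the paper uses a further additive split $b=a_2+a_3p^{k-v_p(\beta_0)}$ so that the $a_3$-sum is a nontrivial additive character sum over $\mathbb{Z}/p^{v_p(\beta_0)}\mathbb{Z}$, whereas you linearize via the bijection $b\mapsto b+\beta_0(2\alpha_0)^{-1}b^2$; both arguments are short and correct.
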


\begin{proof}First, note that if $\beta=0$ the sum is a complete character sum over $a$ and is $0$ unless $v_p(\alpha)\geq m,$ in which case it is $p^{m}$. Also, the sum is trivially $p^m$ if $v_p(\delta)\geq m,$ so for the rest of the calculation we assume that $\beta\neq0$ and $v_p(\delta)< m$.

Let $\delta=\delta_0 p^{v_p(\delta)}$ and let $a=a_0+a_1p^{m-v_p(\delta)},$ where $a_0$ and $a_1$ are running modulo $p^{m-v_p(\delta)}$ and $p^{v_p(\delta)}$ respectively. Then,
\begin{equation}\label{gauss1}
\sum_{a\bmod p^{m}}e\left(\tfrac{2a\alpha +a^2\beta}{p^{m}}\right)=p^{v_p(\delta)}\sum_{a_0\bmod p^{m-v_p(\delta)}}e\left(\tfrac{\delta_0(2a_0\alpha_0+a_0^2\beta_0)}{p^{m-v_p(\delta)}}\right).
\end{equation}

\begin{itemize}
\item {\underline{\textit{Claim}}: } $v_p(\beta_0)>0\Rightarrow\eqref{gauss1}=0$. 

\begin{proof}
Suppose $v_p(\beta_0)>0$ and let $a_0=a_2+a_3p^{m-v_p(\delta)-v_p(\beta_0)}$., where $a_2$ and $a_3$ are running modulo $p^{m-v_p(\delta)-v_p(\beta_0)}$ and $p^{v_p(\beta_0)}$ respectively. Then, 
\[\eqref{gauss1}=p^{v_p(\delta)}\sum_{a_2\bmod p^{m-v_p(\delta)-v_p(\beta_0)}}e\left(\tfrac{\delta_0(2a_2\alpha_0+a_2^2\beta_0)}{p^{m-v_p(d)}}\right)\sum_{a_3\bmod p^{v_p(\beta_0)}}e\left(\tfrac{2\delta_0a_3\alpha_0}{p^{v_p(\beta_0)}}\right).\]
Since $\gcd(\alpha_0,\beta_0)=1,$ $v_p(\beta_0)>0\Rightarrow v_p(\alpha_0)=0$. We also have $v_p(2\delta_0)=0$ (recall that $p\neq2$). Therefore, the last sum over $a_3$ vanishes.
\end{proof}
\end{itemize}
 Furthermore, again by the claim above we can assume that $v_p(\beta_0)=0,$ otherwise \eqref{gauss1} is $0$. Then, 
\begin{align*}
\eqref{gauss1}&=p^{v_p(\delta)}\sum_{a_0\bmod p^{m-v_p(\delta)}}e\left(\tfrac{\delta_0\beta_0(2a_0\alpha_0\beta_0^{-1}+a_0^2)}{p^{m-v_p(\delta)}}\right)\\
&=p^{v_p(\delta)}\sum_{a_0\bmod p^{m-v_p(\delta)}}e\left(\tfrac{\delta_0\beta_0(a_0+\alpha_0\beta_0^{-1})^2-(\alpha_0\beta_0^{-1})^2)}{p^{m-v_p(\delta)}}\right)\\
&=p^{v_p(\delta)}e\left(\tfrac{-\beta(\alpha_0\beta_0^{-1})^2}{p^{m}}\right)\sum_{a_0\bmod p^{m-v_p(\delta)}}e\left(\tfrac{\delta_0\beta_0a_0^2}{p^{m-v_p(\delta)}}\right)\\
&=p^{\frac{m+v_p(\delta)}{2}}\eta(p^{m-v_p(\delta)})\left(\tfrac{\delta_0\beta_0}{p^{m-v_p(\delta)}}\right)e\left(\tfrac{-\beta(\alpha_0/\beta_0)^2}{p^{m}}\right).
\end{align*}
Note that in the last line we used the explicit calculation of the Gauss sum (cf. theorem 3.4 of \cite{Iwaniec:2004aa}).
\end{proof}

\begin{lemma}\label{charsumlem2}Let $p$ be an odd prime, $m\in \mathbb{N},$ and $\alpha,\beta\in\mathbb{Z}$. Then,
\[\sum_{b\bmod p^m}\left(\tfrac{b}{p^m}\right)e\left(\tfrac{b\beta}{p^m}\right)=\begin{cases}\phi(p^{m})& v_p(\beta)\geq m,\,\, m\equiv 0\bmod 2\\ -p^{m-1}& v_p(\beta)=m-1,\,\, m\equiv0\bmod 2\\ \left(\frac{p^{-v_p(\beta)}\beta}{p}\right)\eta(p)p^{m-\frac12}& v_p(\beta)=m-1,\,\, m\equiv 1\bmod 2\\ 0& otherwise\end{cases}\]

\end{lemma}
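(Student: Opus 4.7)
The plan is to reduce the Jacobi symbol identity $\left(\tfrac{b}{p^m}\right)=\left(\tfrac{b}{p}\right)^m$ to split the argument by the parity of $m$, and then in each case reduce the sum to a classical Ramanujan sum or quadratic Gauss sum over $\mathbb{F}_p$.

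First I would handle the even case, $m\equiv0\bmod 2$. Here $\left(\tfrac{b}{p^m}\right)$ is the principal character on $(\mathbb{Z}/p^m\mathbb{Z})^\times$ extended by zero, so the sum becomes a Ramanujan sum
\[
c_{p^m}(\beta)=\sum_{\substack{b\bmod p^m\\ \gcd(b,p)=1}}e\!\left(\tfrac{b\beta}{p^m}\right).
\]
The evaluation of $c_{p^m}(\beta)$ is standard: write $b=b_0+b_1 p$ with $b_0\in(\mathbb{Z}/p)^\times$ and $b_1\in\mathbb{Z}/p^{m-1}$, so that the sum factors as a finite geometric sum in $b_1$ times a sum in $b_0$. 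The $b_1$-sum vanishes unless $v_p(\beta)\geq m-1$, giving $0$ outside this range. If $v_p(\beta)\geq m$ the exponential is trivial and the outer sum yields $\phi(p^m)$; if $v_p(\beta)=m-1$, writing $\beta=p^{m-1}\beta'$ with $\gcd(\beta',p)=1$ reduces the $b_0$-sum to $\sum_{b_0\in(\mathbb{Z}/p)^\times}e(b_0\beta'/p)=-1$, and the prefactor of $p^{m-1}$ from the $b_1$-sum gives the claimed $-p^{m-1}$.

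Next I would treat the odd case, $m\equiv1\bmod 2$, where $\left(\tfrac{b}{p^m}\right)=\left(\tfrac{b}{p}\right)$. Again decompose $b=b_0+b_1 p$ with $b_0$ running over $(\mathbb{Z}/p)^\times$ (since the Legendre symbol vanishes on multiples of $p$) and $b_1$ running modulo $p^{m-1}$. The exponential splits as
\[
e\!\left(\tfrac{b_0\beta}{p^m}\right)\,e\!\left(\tfrac{b_1\beta}{p^{m-1}}\right),
\]
and the $b_1$-sum again collapses the expression to $0$ unless $v_p(\beta)\geq m-1$. If $v_p(\beta)\geq m$ the remaining $b_0$-sum is $\sum_{b_0}\left(\tfrac{b_0}{p}\right)=0$, giving the vanishing stated implicitly in the ``otherwise'' case. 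If $v_p(\beta)=m-1$, set $\beta=p^{m-1}\beta'$ with $\gcd(\beta',p)=1$; then the sum equals
\[
p^{m-1}\left(\tfrac{\beta'}{p}\right)\sum_{b_0\in(\mathbb{Z}/p)^\times}\left(\tfrac{b_0}{p}\right)e\!\left(\tfrac{b_0}{p}\right),
\]
which is $p^{m-1}\left(\tfrac{\beta'}{p}\right)$ times the standard quadratic Gauss sum. Computing the latter gives $\sqrt{p}$ if $p\equiv1\bmod 4$ and $i\sqrt{p}$ if $p\equiv3\bmod 4$, which is precisely $\eta(p)\sqrt{p}$ under the definition $\bar\eta(n)=(1+i^n)/(1+i)$. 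Combining factors yields $\left(\tfrac{p^{-v_p(\beta)}\beta}{p}\right)\eta(p)\,p^{m-1/2}$ as claimed.

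I do not expect a genuine obstacle here: the lemma is essentially a bookkeeping exercise combining the parity of $m$ with the Ramanujan/Gauss sum evaluation. The only point requiring a little care is matching the quadratic Gauss sum's sign to the $\eta$ convention used in Lemma \ref{charsumlem1}; once this identification is made, the four cases follow directly from the case analysis above.
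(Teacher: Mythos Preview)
Your proposal is correct and follows essentially the same approach as the paper: the paper also writes $b=b_0+b_1p$ with $b_0\bmod p$ and $b_1\bmod p^{m-1}$, observes that the $b_1$-sum vanishes unless $v_p(\beta)\geq m-1$, and then finishes by a case-by-case evaluation of the remaining $b_0$-sum (which amounts to your parity split and Ramanujan/Gauss sum computation). The only organizational difference is that you split on the parity of $m$ first, whereas the paper defers this to the final ``case by case calculation of the $b_0$-sum.''
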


\begin{proof}Let $b=b_0+b_1p$. Then,
\begin{align*}
\sum_{b\bmod p^m}\left(\tfrac{b}{p^m}\right)e\left(\tfrac{b\beta}{p^m}\right)&=\sum_{\substack{b_0 \bmod p\\ b_1\bmod p^{m-1}}}\left(\tfrac{b_0}{p^m}\right)e\left(\tfrac{b_0\beta}{p^m}\right)e\left(\tfrac{b_1\beta}{p^{m-1}}\right)\\
&=\sum_{\substack{b_0 \bmod p}}\left(\tfrac{b_0}{p^m}\right)e\left(\tfrac{b_0\beta}{p^{m}}\right)\begin{cases}p^{m-1}& v_p(\beta)\geq m-1\\ 0& otherwise\end{cases}.
\end{align*}
A case by case calculation of the $b_0$-sum finishes the proof.

\end{proof}

\begin{prop}\label{propodd}Let $p$ be an odd prime, $k_1,k_2\in\mathbb{N},$ and $\alpha,\beta\in\mathbb{Z}$. Then, 
\[\omega_{p^{k_1},p^{k_2}}(\alpha,0)=\begin{cases}p^{k_1+2k_2}\phi(p^{k_1})& v_p(\alpha)\geq k_1+2k_2,\,\, k_1\equiv 0\bmod 2\\
0&otherwise\end{cases},\]
and for $\beta\neq0$ we have,
\[\omega_{p^{k_1},p^{k_2}}(\alpha,\beta)=\tau_p(k_1,k_2,\alpha,\beta)\begin{cases}\phi(p^{k_1})& v_p(\beta)\geq k_1,\,\, k_1\equiv 0\bmod 2\\ -p^{k_1-1}& v_p(\beta)=k_1-1,\,\, k_1\equiv 0\bmod 2\\ \left(\tfrac{-p^{-v_p(\beta)\beta}}{p}\right)\eta(p)p^{k_1-\frac12}& v_p(\beta)=k_1-1,\,\, k_1\equiv 1\bmod 2\\0&otherwise\end{cases},\]
where $\alpha_0,\beta_0,$ and $\delta$ are as in lemma \ref{charsumlem1}, and
\[\tau_p(k_1,k_2,\alpha,\beta):=^{\frac{k_1+2k_2+\min\{v_p(\beta),k_1+2k_2\}}{2}}\eta(p^{k_1+2k_2-v_p(\beta)})\left(\tfrac{p^{-v_p(\beta)}\beta}{p^{k_1+2k_2-v_p(\beta)}}\right)e\left(\tfrac{-\beta(\alpha_0\beta_0^{-1})^2}{p^{k_1+2k_2}}\right)\]
if $ v_p(\alpha)\geq \min\{v_p(\beta),k_1+2k_2\},$ and is $0$ otherwise.
\end{prop}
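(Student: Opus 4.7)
Lemma \ref{multlem} has already reduced the computation of $\omega_{l,f}$ to the prime power case, so the problem is purely local at $p$. I would begin by unfolding the definition of $Kl_{p^{k_1},p^{k_2}}(\alpha,b)$; the shape of the bound in Corollary B.8 of \cite{Altug:2015ab} (the factor $\delta(n;f^2)$ detecting squares modulo $p^{2k_2}$ together with the characteristic $\sqrt{l}$ dependence of a Kloosterman-type sum) strongly indicates that $Kl_{p^{k_1},p^{k_2}}(\alpha,b)$ splits as a square-detection factor modulo $p^{2k_2}$ times a Kloosterman-type sum over a variable $a \bmod p^{k_1}$ involving the Jacobi symbol $\left(\tfrac{\cdot}{p^{k_1}}\right)$ and a quadratic exponential in $a$. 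Substituting this into \eqref{omega} and interchanging the order of summation so that the $b$-sum is carried out first leads to the two cases below.

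\textbf{Case $\beta = 0$.} The $b$-sum becomes a complete character-sum, so nontrivial contributions require both the square-detection factor and the Jacobi symbol to reduce to $1$ after summation. The latter forces $k_1 \equiv 0 \bmod 2$ (for $p$ odd, $(\,\cdot\,/p^{k_1}) = (\,\cdot\,/p)^{k_1}$), and the exponential factor coming from the Kloosterman variable must integrate to something nonzero, which pins down $v_p(\alpha) \geq k_1 + 2k_2$. Counting the $b$'s compatible with the square-detection gives $p^{2k_2}$, and the remaining $a$-sum contributes $\phi(p^{k_1}) \cdot p^{k_1}$, producing the stated $p^{k_1+2k_2}\phi(p^{k_1})$.

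\textbf{Case $\beta \neq 0$.} After interchanging the sums, the $b$-sum becomes an exponential sum in $b$ against $\left(\tfrac{b}{p^{k_1}}\right)$ (with the square-detection factor pinning $b$ to a residue class modulo $p^{2k_2}$), of precisely the shape treated by Lemma \ref{charsumlem2}; that lemma is tailor-made to produce the three non-vanishing cases of the statement, namely the $\phi(p^{k_1})$ and $-p^{k_1-1}$ terms when $k_1$ is even, and the $\left(\tfrac{-p^{-v_p(\beta)}\beta}{p}\right)\eta(p)p^{k_1-\tfrac12}$ term when $k_1$ is odd. The outer sum is then in the Kloosterman variable $a \bmod p^{k_1+2k_2}$ and has the form
\[\sum_{a \bmod p^{k_1+2k_2}} e\!\left(\tfrac{2a\alpha + a^2\beta}{p^{k_1+2k_2}}\right),\]
which is exactly the generalised Gauss sum computed in Lemma \ref{charsumlem1}; this produces $\tau_p(k_1,k_2,\alpha,\beta)$ with the expected $p^{(k_1+2k_2+\min\{v_p(\beta),k_1+2k_2\})/2}$, the $\eta$-factor, the Jacobi symbol in $p^{-v_p(\delta)}\beta$, and the quadratic exponential $e(-\beta(\alpha_0\beta_0^{-1})^2/p^{k_1+2k_2})$.

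\textbf{Main obstacle.} The hardest part will be the bookkeeping of $p$-adic valuations. Lemmas \ref{charsumlem1} and \ref{charsumlem2} each impose their own divisibility conditions on $v_p(\alpha)$, $v_p(\beta)$, and $v_p(\gcd(\alpha,\beta))$, and one must verify that these are simultaneously compatible with the square-detection constraint modulo $p^{2k_2}$ inherited from the $f$-part of $Kl$. In particular, the $\min\{v_p(\beta), k_1+2k_2\}$ appearing inside $\tau_p$ reflects the fact that once $v_p(\beta) \geq k_1+2k_2$ the quadratic exponential in $a$ becomes effectively linear, and the two regimes must be glued together consistently. Threading these case-analyses without losing or double-counting any factor is where I expect essentially all of the effort to go; the actual evaluations are already done by the two lemmas.
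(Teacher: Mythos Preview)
Your approach is essentially the paper's: after the harmless substitution $a\mapsto 2a$ (valid for $p$ odd), the paper parametrizes the constraint $a^2\equiv b\bmod p^{2k_2}$ explicitly by writing $b=a^2+b_0p^{2k_2}$ with $b_0$ free modulo $p^{k_1}$, which cleanly factors $\omega_{p^{k_1},p^{k_2}}(\alpha,\beta)$ as the product of the $a$-sum of Lemma~\ref{charsumlem1} (with $m=k_1+2k_2$) and the $b_0$-sum of Lemma~\ref{charsumlem2} (applied to $-b_0$, whence the minus sign inside the Jacobi symbol). Your anticipated ``main obstacle'' largely evaporates once this parametrization is written down --- the two sums decouple completely with no cross-conditions to thread, so the valuation bookkeeping is just reading off the case distinctions already built into the two lemmas; note also that in your $\beta=0$ accounting the factor $p^{k_1+2k_2}$ comes entirely from the $a$-sum and $\phi(p^{k_1})$ from the $b_0$-sum, not the split you wrote.
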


\begin{proof}First note that since $p$ is odd, the substitution $a\mapsto 2a$ does not change the sum and gives
\[\omega_{p^{k_1},p^{k_2}}(\alpha,\beta)=\sum_{\substack{a,b\bmod p^{k_1+2k_2}\\ a^2\equiv b\bmod p^{2k_2}}}\left(\tfrac{(a^2-b)/p^{2k_2}}{p^{k_1}}\right)e\left(\tfrac{2a\alpha+b\beta}{p^{k_1+2k_2}}\right).\]
For each $a \bmod\, p^{k_1+2k_2}$ the elements $b\bmod\, p^{k_1+2k_2}$ satisfying $a^2\equiv 4b\bmod \,p^{2k_2}$ can be parametrized by $b=a^2+b_0p^{2k_2},$ where $b_0$ is running $\bmod\, p^{k_1}$. Note that with this parametrization $a^2-b\equiv -b_0p^{2k_2} \bmod \,p^{k_1+2k_2}$. Using this observation we get,
\begin{align*}
\omega_{p^{k_1},p^{k_2}}(\alpha,\beta)&=\sum_{\substack{a \bmod p^{k_1+2k_2}\\ b_0 \bmod p^{k_1}}}\left(\tfrac{-b_0}{p^{k_1}}\right)e\left(\tfrac{b_0\beta}{p^{k_1}}\right)e\left(\tfrac{2a\alpha+a^2\beta }{p^{k_1+2k_2}}\right).
\end{align*}
The proposition now follows from lemmas \ref{charsumlem1} and \ref{charsumlem2} (One just needs to keep in mind that in lemma \ref{charsumlem1} the sum vanishes unless $v_p(\alpha)\geq \min\{m,v_p(\beta)\}$.).
\end{proof}

\begin{cor}\label{corodd}Let $p$ be an odd prime, $k_1,k_1\in\mathbb{N},$ and $\alpha,\beta\in\mathbb{Z}$. Then,
\[\omega_{p^{k_1},p^{k_2}}(\alpha,\beta)\ll\begin{cases}p^{k_1+k_2}\sqrt{\gcd(p^{k_1+2k_2},\beta)\gcd(p^{k_1},\beta)}& p^{k_1-1}\mid \beta,\,\, \gcd(p^{k_1+2k_2},\beta)\mid \alpha \\ 0& otherwise\end{cases},\]
where the implied constant is absolute. 
\end{cor}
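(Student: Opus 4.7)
My plan is to deduce the corollary directly from the exact formulas in Proposition \ref{propodd} by a case analysis, using the key identity $v_p\bigl(\gcd(p^N,\beta)\bigr)=\min\{v_p(\beta),N\}$ to rewrite all the exponents on the right-hand side in a uniform way.

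First I would dispatch the ``otherwise'' clause. For $\beta\neq0$, Proposition \ref{propodd} gives a nonzero value only when $v_p(\beta)\geq k_1-1$, which is exactly the condition $p^{k_1-1}\mid\beta$. Whenever a nonzero value occurs, the prefactor $\tau_p(k_1,k_2,\alpha,\beta)$ is itself nonzero only if $v_p(\alpha)\geq\min\{v_p(\beta),k_1+2k_2\}$; since $\gcd(p^{k_1+2k_2},\beta)=p^{\min\{v_p(\beta),k_1+2k_2\}}$, this is the same as $\gcd(p^{k_1+2k_2},\beta)\mid\alpha$. For $\beta=0$, Proposition \ref{propodd} is nonzero exactly when $v_p(\alpha)\geq k_1+2k_2$ and $k_1$ is even; since $\gcd(p^{k_1+2k_2},0)=p^{k_1+2k_2}$, this is again covered by the two divisibility conditions, so the ``otherwise'' case yields $0$ in every instance.

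Next I would verify the bound in each of the three nonzero cases of Proposition \ref{propodd}. In every case $|\tau_p(k_1,k_2,\alpha,\beta)|=p^{(k_1+2k_2+\min\{v_p(\beta),k_1+2k_2\})/2}$, while the ``$\sqrt{\cdot}$'' in the corollary equals $p^{(\min\{v_p(\beta),k_1+2k_2\}+\min\{v_p(\beta),k_1\})/2}$. Hence it suffices to check, in each case, that the remaining factor ($\phi(p^{k_1})$, $p^{k_1-1}$, or $p^{k_1-1/2}$) times $|\tau_p|$ is bounded by $p^{k_1+k_2}p^{(\min\{v_p(\beta),k_1+2k_2\}+\min\{v_p(\beta),k_1\})/2}$. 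For $v_p(\beta)\geq k_1$ (Case 1), $\min\{v_p(\beta),k_1\}=k_1$ and the factor $\phi(p^{k_1})\leq p^{k_1}$ gives the bound with room to spare. For $v_p(\beta)=k_1-1$ (Cases 2 and 3), $\min\{v_p(\beta),k_1\}=k_1-1$ and one checks that $p^{k_1-1}\cdot|\tau_p|$ and $p^{k_1-1/2}\cdot|\tau_p|$ both equal $p^{2k_1+k_2-1}$, which matches the corollary bound; finally, the $\beta=0$ sub-case contributes $p^{k_1+2k_2}\phi(p^{k_1})\leq p^{2k_1+2k_2}$, matching the RHS.

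I do not expect any real obstacles: the corollary is essentially a coarser, cleaner repackaging of Proposition \ref{propodd}, and the only thing to watch is keeping track of how the ``$\min$'' terms in the exponents of $\tau_p$ and the gcds line up. The mildest subtlety is the mixed case $k_1$ odd, $v_p(\beta)=k_1-1$, where the half-integer exponent from the Gauss sum combines cleanly with the half-integer exponent from $|\tau_p|$; in all other cases the parities are even and the comparison is immediate.
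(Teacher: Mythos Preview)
Your proposal is correct and follows essentially the same approach as the paper: both deduce the corollary from Proposition~\ref{propodd} by bounding $|\tau_p|$ and the brace factor separately, then checking the ``otherwise'' vanishing via the divisibility constraints encoded in $\tau_p$ and the brace. One tiny arithmetic slip: in your Case~2 you claim $p^{k_1-1}\cdot|\tau_p|=p^{2k_1+k_2-1}$, but in fact it equals $p^{2k_1+k_2-3/2}$; this is harmless since it is still $\leq p^{2k_1+k_2-1}$, so the bound holds.
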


\begin{proof}First assume $\beta\neq0$. Then, by the second statement of proposition \ref{propodd} we know that $\omega_{p^{k_1},p^{k_2}}(\alpha,\beta)$ is the product of two terms (the first is $\tau_p(k_1,k_2,\alpha,\beta),$ and the second is in the braces). The second one of those terms is $0$ unless $p^{k_1-1}\mid \beta,$ in which case it is $O(p^{\frac{k_1}{2}}\sqrt{\gcd(\beta,p^{k_1})}),$ where the implied constant is absolute. On the other hand, the first term satisfies the following bound:
\begin{align*}
\tau_p(k_1,k_2,\alpha,\beta)&\ll \begin{cases}p^{\frac{k_1+2k_2}{2}}\sqrt{\gcd(p^{k_1+2k_2},\beta)}& v_p(\alpha)\geq \min\{v_p(\beta),k_1+2k_2\}\\ 0& otherwise\end{cases}. 
\end{align*}
These two bounds imply the corollary in the case $\beta\neq0$. For $\beta=0,$ the bound in the statement is $O(p^{2k_1+2k_2})$ if $v_p(\alpha)\geq k_1+2k_2$ and $0$ otherwise. A comparison of this bound with the first statement of proposition \ref{propodd} finishes the proof.
\end{proof}

\begin{lemma}\label{lemeven} Let $k_1,k_1\in\mathbb{N}$ and $\alpha,\beta\in\mathbb{Z}$. Then,
\[\omega_{2^{k_1},2^{k_2}}(\alpha,\beta)\ll\begin{cases}2^{k_1+k_2}\sqrt{\gcd(2^{k_1+2k_2},\beta)\gcd(2^{k_1},\beta)}& 2^{k_1-1}\mid \beta,\,\, \gcd(2^{k_1+2k_2},\beta)\mid \alpha \\ 0& otherwise\end{cases},\]
where the implied constant is absolute. 
\end{lemma}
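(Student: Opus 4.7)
The plan is to mirror the argument used to prove Proposition \ref{propodd} and Corollary \ref{corodd} for odd primes, while carefully tracking the additional complications that arise at $p=2$. After opening up $Kl_{2^{k_1},2^{k_2}}(\alpha,b)$ inside the definition of $\omega$ and interchanging the order of summation, one is left with an inner quadratic exponential sum in one variable (the analogue of Lemma \ref{charsumlem1}) times an outer Kronecker-symbol-twisted character sum in the auxiliary parameter (the analogue of Lemma \ref{charsumlem2}), both now taken modulo powers of $2$.

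First I would establish the $p=2$ analogues of Lemmas \ref{charsumlem1} and \ref{charsumlem2}. For the Gauss-type inner sum
\[\sum_{a\bmod 2^{m}}e\left(\tfrac{2a\alpha+a^2\beta}{2^{m}}\right),\]
the classical evaluation of quadratic Gauss sums modulo prime powers of $2$ (cf.\ Theorem 3.4 of \cite{Iwaniec:2004aa}) produces an expression of magnitude at most $2^{(m+\min\{m,v_2(\beta)\})/2}$, times an indicator enforcing the divisibility condition $v_2(\alpha)\geq \min\{m,v_2(\beta)\}$ and a parity condition on $m - v_2(\beta)$ (analogous to the appearance of the Jacobi symbol in the odd case). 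For the Kronecker sum $\sum_{b}\left(\tfrac{b}{2^{m}}\right)e(b\beta/2^{m})$, the usual case analysis (reducing $b \bmod 8$ to handle the definition of the Kronecker symbol at $2$) yields a nonzero answer only when $2^{m-1}\mid\beta$, with a bound of size $O(2^{m-1/2})$ in all surviving cases.

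Second, since the substitution $a\mapsto 2a$ exploited in the odd case is \emph{not} a bijection modulo powers of $2$, I would avoid it entirely and instead parametrize $b$ directly by the congruence $a^2\equiv 4b\bmod 2^{2k_2}$ coming from the definition of $Kl_{l,f}$. Writing $b = \tfrac{a^2-b_0 2^{2k_2}}{4}$ (after isolating the extra factor of $4$ coming from the modulus $4lf^2 = 2^{k_1+2k_2+2}$ and completing the square where needed) splits $\omega_{2^{k_1},2^{k_2}}(\alpha,\beta)$ into the product of the two sums above, each of which is estimated by the two $p=2$ lemmas. Multiplying the two bounds, exactly as in the proof of Corollary \ref{corodd}, yields the claimed estimate, with the constant factors coming from the $4$ in $4lf^2$ and from the $2$-adic Gauss sum formulas absorbed into the (absolute) implied constant.

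The main obstacle will be the careful bookkeeping at $p=2$: the Gauss sum formula has a genuinely different shape (with parity and mod-$8$ dependencies), and the extra two powers of $2$ in $4lf^2 = 2^{k_1+2k_2+2}$ shift all divisibility conditions compared to the odd setting. The crucial point to verify is that, after this shift, the support conditions still reduce to the \emph{same} pair $2^{k_1-1}\mid\beta$ and $\gcd(2^{k_1+2k_2},\beta)\mid\alpha$ appearing in Corollary \ref{corodd}, and that the magnitude estimate $2^{k_1+k_2}\sqrt{\gcd(2^{k_1+2k_2},\beta)\gcd(2^{k_1},\beta)}$ survives intact up to an absolute constant. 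Since no sharper dependence on $p=2$ is being claimed, this verification is purely mechanical and the lemma follows by the same multiplicative structure that yielded the odd-prime statement.
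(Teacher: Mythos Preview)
Your proposal is correct and follows essentially the same approach as the paper, which itself only sketches the argument by saying the proofs of Proposition \ref{propodd} and Corollary \ref{corodd} go through verbatim up to a case-by-case analysis at $p=2$. The one point the paper flags explicitly that you leave implicit in your ``bookkeeping'' remark is the extra congruence condition $\tfrac{a^2-4b}{2^{2k_2}}\equiv 0,1\bmod 4$ built into the definition of $Kl_{l,f}$, which (together with the $\bmod\,8$ periodicity of $\left(\tfrac{\cdot}{2}\right)$ you already noted) accounts for the additional cases in the analogue of Lemma \ref{charsumlem2}.
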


\begin{proof}The proof of this case follow the proofs of proposition \ref{propodd} and corollary \ref{corodd} verbatim. One just needs to take into account the extra condition $\frac{a^2-4b}{p^{2k_2}}\equiv 0,1 \bmod 4$ and recall that the Kronecker symbol $\left(\frac{\cdot}{2}\right)$ is periodic $\bmod\,8$. These do not bring any new ingredients to the proof but rather a delicate case by case analysis whose details we leave to the reader. 
\end{proof}

\begin{cor}\label{omegacor}Let $l,f\in\mathbb{Z}_{>0}$ and $\xi,\nu\in \mathbb{Z}$. Then,
\[\omega_{l,f}(\xi,\nu)\ll\begin{cases}\log(lf)lf\sqrt{\gcd(lf^2,\nu)\gcd(l,\nu)}& \frac{l}{rad(l)}\mid \nu,\,\, \gcd(lf^2,\nu)\mid \xi \\ 0& otherwise\end{cases},\]
where the implied constant is absolute. 
\end{cor}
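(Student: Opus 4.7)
The strategy is the natural one given what is already in place: factor $\omega_{l,f}(\xi,\nu)$ into local contributions via Lemma \ref{multlem}, apply Corollary \ref{corodd} at odd primes and Lemma \ref{lemeven} at $p=2$, and assemble the pieces. Concretely, by Lemma \ref{multlem} we have
\[\omega_{l,f}(\xi,\nu) = \prod_p \omega_{p^{v_p(l)},\,p^{v_p(f)}}(\alpha_p,\beta_p),\]
where $\alpha_p$ and $\beta_p$ are $\xi$ and $\nu$ multiplied by a unit modulo the relevant $p$-power. In particular $v_p(\alpha_p) = v_p(\xi)$ and $v_p(\beta_p) = v_p(\nu)$, so the prime-power bounds may be read directly in terms of $\xi$ and $\nu$.

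The next step is to assemble the support conditions. The local condition $v_p(\nu)\geq v_p(l)-1$ (required at every prime $p$ with $v_p(l)\geq 1$ for the local factor not to vanish) globalizes, after multiplying across primes dividing $l$, to $\prod_{p\mid l}p^{v_p(l)-1}=l/\mathrm{rad}(l)$ dividing $\nu$. The second local condition $\gcd(p^{v_p(l)+2v_p(f)},\nu)\mid \xi$, using multiplicativity of $\gcd$ on coprime prime-power factorizations, is equivalent to $\gcd(lf^2,\nu)\mid \xi$. Thus both conditions in the statement are exactly what the local theory produces.

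Finally, one multiplies the local magnitude bounds. The monomial part $p^{v_p(l)+v_p(f)}$ telescopes into $lf$, and the square-root factors $\sqrt{\gcd(p^{v_p(l)+2v_p(f)},\nu)\gcd(p^{v_p(l)},\nu)}$ combine (again by multiplicativity of $\gcd$) into $\sqrt{\gcd(lf^2,\nu)\gcd(l,\nu)}$, yielding the claimed main factor $lf\sqrt{\gcd(lf^2,\nu)\gcd(l,\nu)}$. The only real bookkeeping — and the step that accounts for the $\log(lf)$ in the statement — is controlling the accumulation of absolute constants from the $\omega(lf)$ local bounds together with the mild edge-case losses at $p=2$ coming from the extra $a^2-4b\equiv 0,1\bmod 4$ congruence handled in Lemma \ref{lemeven}. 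Using the trivial estimate $\omega(lf)\ll \log(lf)$, these losses are absorbed into the single $\log(lf)$ factor. No individual step is hard; the whole argument is essentially an exercise in CRT combined with the prime-power estimates already proved, and this accounting is where most of the (light) work lies.
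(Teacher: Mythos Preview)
Your proposal is correct and follows essentially the same route as the paper: factor via Lemma~\ref{multlem}, apply the prime-power bounds of Corollary~\ref{corodd} and Lemma~\ref{lemeven}, and reassemble both the support conditions and the magnitude bounds multiplicatively, with the accumulated $O(1)$ constants over the $\omega(lf)$ primes absorbed into the $\log(lf)$ factor. If anything, your write-up is more explicit than the paper's about how the local divisibility conditions $p^{v_p(l)-1}\mid\nu$ and $\gcd(p^{v_p(l)+2v_p(f)},\nu)\mid\xi$ globalize to $l/\mathrm{rad}(l)\mid\nu$ and $\gcd(lf^2,\nu)\mid\xi$.
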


\begin{proof}
By lemma \ref{multlem} it is enough to bound $\omega_{p^{k_1},p^{k_2}}(\alpha_p,\beta_p),$ where $\alpha_p=((4lf^2)^{(p)})^{-1}\xi$ and $\beta_p=(4lf^2)^{(p)})^{-1}\nu,$ and to bound $\prod_{p\mid lf^2}O(1)$. The bound on the character sums follow from corollary \ref{corodd} and lemma \ref{lemeven}. Finally, by the prime number theorem we have $\prod_{p\mid lf^2}O(1)=O(\log(lf^2))=O(lf)$. The corollary follows.
\end{proof}

\bibliographystyle{alpha}

\end{document}